\documentclass[12pt]{amsart}
\usepackage{amsthm}
\usepackage{amsmath}
\usepackage[colorlinks]{hyperref}
\usepackage{epsfig}
\usepackage{accents}
\usepackage{geometry}
\usepackage{tikz}
\usepackage%[american]
{circuitikz}
\usetikzlibrary{arrows}
\usetikzlibrary{decorations.pathreplacing,arrows.meta}
\usepackage{amssymb,latexsym,cite,epsf,graphics}
\usepackage{graphicx,xcolor}
\usepackage{enumitem}
\usepackage{comment}

\usepackage{subcaption}
\usepackage[ruled,vlined,linesnumbered]{algorithm2e}
\definecolor{darkred}{rgb}{1,0,0}

\newtheorem{theorem}{Theorem}[section]

\newtheorem{lemma}[theorem]{Lemma}

\newtheorem{proposition}[theorem]{Proposition}

\newtheorem{corollary}[theorem]{Corollary}

\newtheorem*{theorem*}{Theorem}

\theoremstyle{remark}

\newcommand{\mscommnew}[1]{\begingroup%\color{teal}
#1\endgroup}
\newcommand{\shcomm}[1]{\begingroup%\color{green}
#1\endgroup}

\theoremstyle{definition}

\newtheorem{remark}[theorem]{Remark}
\newtheorem{example}[theorem]{Example}
\newtheorem{definition}[theorem]{Definition}
\newtheorem*{notation*}{Notation}

\title{Matrix-tree theorem\\ for cohomological electrical networks
}

\author{Pavlo Pylyavskyy}
\address{\hspace{-.3in} Department of Mathematics, University of Minnesota,
Minneapolis, MN 55414, USA}
\email{ppylyavs@umn.edu}

\author{Svetlana Shirokovskikh}
\address{\hspace{-.3in} HSE University, Moscow, Russia}
\email{sveta.17.10\,@\,yandex$\cdot $ru}

\author{Mikhail Skopenkov}
\address{\hspace{-.3in} King Abdullah University of Science and Technology}
\email{mikhail.skopenkov\,@\,gmail$\cdot $com}

\begin{document}

\thanks{The first author was partially supported by a grant from Simons Foundation International SFI-MPS-SFM-00011393 P.P.. The second author was supported by the Theoretical Physics and Mathematics Advancement Foundation ``BASIS''. The third author was supported by KAUST baseline.
}

\keywords{}

\begin{abstract}
We introduce a new type of boundary condition for electrical networks by specifying a cohomology class on the underlying cell complex. As special and limiting cases, we recover the Dirichlet boundary condition, multiport condition, and prescribed voltage drops along nontrivial cycles on a surface. In this setting, we obtain a combinatorial formula for minors of the response matrix, generalizing the matrix-tree theorem, the Kenyon--Wilson formula, and a recently discovered formula for networks on surfaces due to Lam et al. To prove this formula, we develop a statistical-physics toolbox, including a new parafermionic observable for the uniform spanning tree model. This is the first parafermionic observable that is intrinsic, in the sense that it does not require an embedding of the graph in the plane or on a surface. Other ingredients are an intrinsic Temperley correspondence and source-synchronized networks, the latter providing a completely elementary reformulation of our results.

%\textbf{Keywords and phrases.} Electric network, cohomological network, parafermionic observable, network response, matrix-tree theorem

\textbf{MSC2020:} Primary 31C20; Secondary 05C22, 55U15, 94C05, 82B20
%05C82, 05C22, 94C05, 31C20, 35R02, 52C20
\end{abstract}

\ \vspace{-.1in}

\maketitle

\vspace{-.4in}

\setcounter{tocdepth}{1}
\tableofcontents

 \vspace{-.4in}

\section{Introduction}

Electrical networks are usually equipped with standard Dirichlet and Neumann boundary conditions in mathematical literature \cite{CurtisMorrow,Doyle-Snell-84,ZSU}, whereas many applications require non-standard ones. For instance, in multiport networks, one prescribes the voltage drop and sets the net current to zero within each port \cite{BIK,PSS}. For networks on surfaces, one prescribes voltage drops along topologically nontrivial loops, resulting in a discretization of Riemann surfaces \cite{BobenkoSkopenkov+2016+217+250}. Special boundary conditions also arise in electromagnetic circuits~\cite{Milton-Seppecher-10,Milton-Seppecher-10B}. In this work, we introduce a general framework for treating such boundary conditions intrinsically on arbitrary graphs and, more generally, cell complexes. See Figs.~\ref{fig:magnet} and~\ref{fig:boundary-conditions-examples}.

\begin{figure}[t]
\centering

% --- Картинка 1 ---
\begin{tikzpicture}[scale=0.8]%[scale=0.7]
    \coordinate (A) at (0,0);
    \coordinate (B) at (1,0);
    \coordinate (C) at (1,1);
    \coordinate (D) at (0,1);
    \coordinate (E) at (0,-1);
    \coordinate (F) at (1,-1);
    \coordinate (G) at (2.5,-2);
    \coordinate (H) at (2.5,-4);

    \fill[blue] (A)--(B)--(C)--(D)--cycle;
    \fill[red] (A)--(B)--(F)--(E)--cycle;
    \draw[->, thick] (1.5,1) -- (1.5,-1);
    \draw (0.5,-2) ellipse [x radius=2cm, y radius=0.7cm];
    \draw (0.5,-4) ellipse [x radius=2cm, y radius=0.7cm];

    \node at ($(A)!0.5!(C)$) {N};
    \node at ($(A)!0.5!(F)$) {S};

    \fill[color=black] (G) circle (2pt);
    \fill[color=black] (H) circle (2pt);
    \draw (G)--(H);

    \draw[->,thick] (-0.914,-2.7) arc[start angle=225, end angle=315, x radius=2cm, y radius=0.7cm];
    \draw[->,thick] (-0.914,-4.7) arc[start angle=225, end angle=315, x radius=2cm, y radius=0.7cm];
\end{tikzpicture}%
\hspace{0.02\textwidth}%
% --- Картинка 2 ---
\begin{circuitikz}[scale=0.8]%[scale=0.7]
    \draw (1,0) ellipse [x radius=1cm, y radius=2cm];
    \draw[-{Latex}] (1,-2) arc[start angle=270, end angle=315, x radius=1cm, y radius=2cm];
    \node at (1,2.4) {$e_1$};
    \draw (0,0) to[battery2,l={$\mathcal{E}$}] (0,0);
    \coordinate (H2) at (2,0);
    \fill[color=black] (H2) circle (2pt);

    \coordinate (G2) at (4,0);
    \fill[color=black] (G2) circle (2pt);
    \draw (5,0) ellipse [x radius=1cm, y radius=2cm];
    \draw[-{Latex}] (5,-2) arc[start angle=270, end angle=225, x radius=1cm, y radius=2cm];
    \draw (6,0) to[battery2,l={$\mathcal{E}$}] (6,0);
    \draw (2,0) -- (4,0) node[midway, above] {$e_2$};
    \node at (5,2.4) {$e_3$};

\end{circuitikz}%
\hspace{0.02\textwidth}%
% --- Картинка 3 ---
\begin{tikzpicture}[scale=0.8]%[scale=0.7]
\def\xr{2}  % горизонтальный радиус
\def\yr{0.5} % вертикальный радиус
\def\h{3}  % высота цилиндра

% верхнее основание — заливка светлее
\fill[blue!10] (0,0) ellipse [x radius=\xr, y radius=\yr];

% верхнее основание — контур
\draw (0,0) ellipse [x radius=\xr, y radius=\yr];

% передняя поверхность с заливкой
\fill[blue!40]
  (-\xr,-\h) arc[start angle=-180,end angle=0,x radius=\xr,y radius=\yr] % нижний эллипс
  -- (\xr,0) arc[start angle=0,end angle=-180,x radius=\xr,y radius=\yr] % верхний эллипс
  -- cycle;

% нижнее основание (видимая и невидимая часть)
\draw (-\xr,-\h) arc[start angle=180,end angle=360,x radius=\xr,y radius=\yr];
\draw[dashed] (-\xr,-\h) arc[start angle=180,end angle=0,x radius=\xr,y radius=\yr];

% боковые линии
\draw (-\xr,0) -- (-\xr,-\h);
\draw (\xr,0) -- (\xr,-\h);
\node at (0,1.5) {$\mathcal{E} \in H^1(S, \mathbb{R})$};
\end{tikzpicture}
\caption{A physical system modeled by an electrical circuit with non-standard boundary conditions. Left: A magnet quickly inserted into a circuit composed of two close identical loops induces equal source voltages in both loops. Middle: A model of this system is a
\emph{source-synchronized circuit}, that is, the one with the voltage sources forced to have the same voltage $\mathcal{E}$. %\mscomm{Add labels $e_1$, $e_2$, $e_3$ for the purpose of Example~\ref{ex:cylinder}!}
Right: A more conceptual model is a \emph{cohomological circuit} on a cylinder, with the source voltage represented as a cohomology class~$\mathcal{E}$. The source voltage on any cycle on the cylinder equals the evaluation of the cohomology class $\mathcal{E}$ on the cycle.
}
\label{fig:magnet}
\end{figure}

This framework traces back to the original work by Kirchhoff~\cite{Kirchhoff-47}. He conceptualized electrical networks as graphs whose edges represent network elements and vertices represent their junctions.
Notably, he allowed several edges with voltage sources, so that his setup did not reduce to the usual Dirichlet boundary conditions. We slightly generalize his setup by introducing \emph{synchronized sources}, that is, enforcing some of the sources to have the same voltage, and then put such boundary conditions into a more conceptual \emph{cohomological} form. See Sec.~\ref{sec-networks}.

This idea is motivated by the following example. Consider a cylinder with an electrical network embedded on its surface; see Fig.~\ref{fig:magnet}. If a magnet is quickly inserted into the cylinder, it induces a current in the network even in the absence of batteries. Here, the source voltage is not given by a potential difference between vertices. Instead, it manifests as a nonzero total voltage drop along non-contractible cycles generating the first homology of the cylinder. By contrast, the voltage drop along any contractible loop remains zero
(see Definition~\ref{def-surface-circuit} below). This is a natural example where the source voltage is encoded by a cohomology class rather than a function on vertices. We further extend the set-up to general cell complexes.

These \emph{cohomological} boundary conditions lead to rich theory, partially inspired by similar recent developments in hydrodynamics~\cite{Chern-etal-23}.
We obtain analogs of the existence and uniqueness theorem (Sec.~\ref{sec-networks}), formulae for currents, voltages, and response matrix (Secs.~\ref{sec-response} and~\ref{sec-voltages}).

The matrix-tree theorem is one of the main graph-theoretic results on electrical networks. Its origins go back to the same work of Kirchhoff \cite{Kirchhoff-47}. The theorem expresses the determinant of a so-called Kirchhoff matrix as a sum over spanning trees, resulting in a combinatorial formula for the currents in the network. See, e.g., \cite[Section~2]{PSS}. There are generalizations to other network elements \cite{Chen-72,robichaud1961graphes,seshu1961linear}, directed graphs \cite{chan1969introductory}, other boundary conditions \cite{PSS}, hypergraphs \cite{Masbaum-Vaintrob}, and higher dimensions \cite{Catanzaro-etal-15,Duval-etal-09}.
Chaiken \cite{chaiken1982} has proved a far-reaching extension, known as the all-minors matrix-tree theorem, see also~\cite{CK}.
For electrical networks, the response matrix is a natural relative of the Kirchhoff matrix. Curtis, Ingerman, and Morrow \cite{curtis1998,CurtisMorrow} found similar interpretations for a specific subset of minors of the response matrix, in the context of an inverse problem \cite{Milton-Seppecher-08,Skopenkov-15,Rote}. Then Kenyon and Wilson~\cite{KW} gave a formula for all minors. Recently, Lam et al.~\cite{Lam-25} have obtained analogous formulae for networks on surfaces; the proof was elegant but strongly tied to a surface embedding and very different from our methods.

\begin{figure}[t]
    \centering
    \scalebox{0.66}{\begin{tikzpicture}
\draw[color=blue]
(0,0) to[R, thick, i={$\quad$}] (2,0)
(4,0) to[battery2, *-, thick] (2,0)
(4,0) to[R, thick, i={$\quad$}] (3,1.5)
(2,3) to[battery2, *-, thick] (3,1.5)
(2,3) to[R, thick, i={$\quad$}] (1,1.5)
(0,0) to[battery2, *-, thick] (1,1.5);
\end{tikzpicture}}
\quad
%%%%%%%%%%%
\scalebox{1.6}{\begin{tikzpicture}[line cap=round,line join=round,>=triangle 45,x=2.0cm,y=2.0cm]
\clip(-3.59,1.26) rectangle (-2.3,2.7);
\draw [color=blue, rotate around={-0.77:(-3.01,1.75)}] (-3.01,1.75) ellipse (1.03cm and 0.5cm);
\draw [shift={(-3.02,2.16)}] plot[domain=3.94:5.48,variable=\t]({1*0.49*cos(\t r)+0*0.49*sin(\t r)},{0*0.49*cos(\t r)+1*0.49*sin(\t r)});
\draw [shift={(-3.02,1.4)}] plot[domain=0.94:2.21,variable=\t]({1*0.45*cos(\t r)+0*0.45*sin(\t r)},{0*0.45*cos(\t r)+1*0.45*sin(\t r)});
\draw [shift={(-3.18,1.59)}] plot[domain=-0.47:0.39,variable=\t]({1*0.2*cos(\t r)+0*0.2*sin(\t r)},{0*0.2*cos(\t r)+1*0.2*sin(\t r)});
\draw [color=blue, shift={(-3.18,1.93)}] plot[domain=-0.42:0.36,variable=\t]({1*0.2*cos(\t r)+0*0.2*sin(\t r)},{0*0.2*cos(\t r)+1*0.2*sin(\t r)});
\draw [shift={(-2.98,2.45)}] plot[domain=4.03:5.33,variable=\t]({1*0.84*cos(\t r)+0*0.84*sin(\t r)},{0*0.84*cos(\t r)+1*0.84*sin(\t r)});
\begin{scriptsize}
\draw[color=black] (-2.96, 1.41) node {$\omega_1$};
\draw[color=black] (-2.38, 1.73) node {$\omega_2$};
%\draw[color=blue] (-2.88, 1.91) node {$e_1$};
%\draw[color=blue] (-3.3, 2.04) node {$e_2$};
\end{scriptsize}
\end{tikzpicture}
}
%%%%%%
% \quad\scalebox{1.6}{\begin{tikzpicture}[line cap=round,line join=round,>=triangle 45,x=2.0cm,y=2.0cm]
% \clip(-3.59,1.26) rectangle (-2.3,2.7);
% \draw [rotate around={-0.77:(-3.01,1.75)}] (-3.01,1.75) ellipse (1.03cm and 0.5cm);
% \draw [shift={(-3.02,2.16)}] plot[domain=3.94:5.48,variable=\t]({1*0.49*cos(\t r)+0*0.49*sin(\t r)},{0*0.49*cos(\t r)+1*0.49*sin(\t r)});
% \draw [shift={(-3.02,1.4)}] plot[domain=0.94:2.21,variable=\t]({1*0.45*cos(\t r)+0*0.45*sin(\t r)},{0*0.45*cos(\t r)+1*0.45*sin(\t r)});
% \draw [shift={(-3.18,1.6)}] plot[domain=-0.49:0.42,variable=\t]({1*0.2*cos(\t r)+0*0.2*sin(\t r)},{0*0.2*cos(\t r)+1*0.2*sin(\t r)});
% \draw [shift={(-2.98,2.45)}] plot[domain=4.03:5.33,variable=\t]({1*0.84*cos(\t r)+0*0.84*sin(\t r)},{0*0.84*cos(\t r)+1*0.84*sin(\t r)});
% \begin{scriptsize}
% \draw[color=black] (-2.96,1.41) node {\tiny  $\omega_1$};
% \draw[color=black] (-2.38,1.73) node {\tiny  $\omega_2$};
% % \draw[color=blue] (-2.96,1.41) node {$e_1$};
% % \draw[color=blue] (-2.38,1.73) node {$e_2$};
% \end{scriptsize}
% \end{tikzpicture}}
\quad\includegraphics[width=0.37\textwidth]{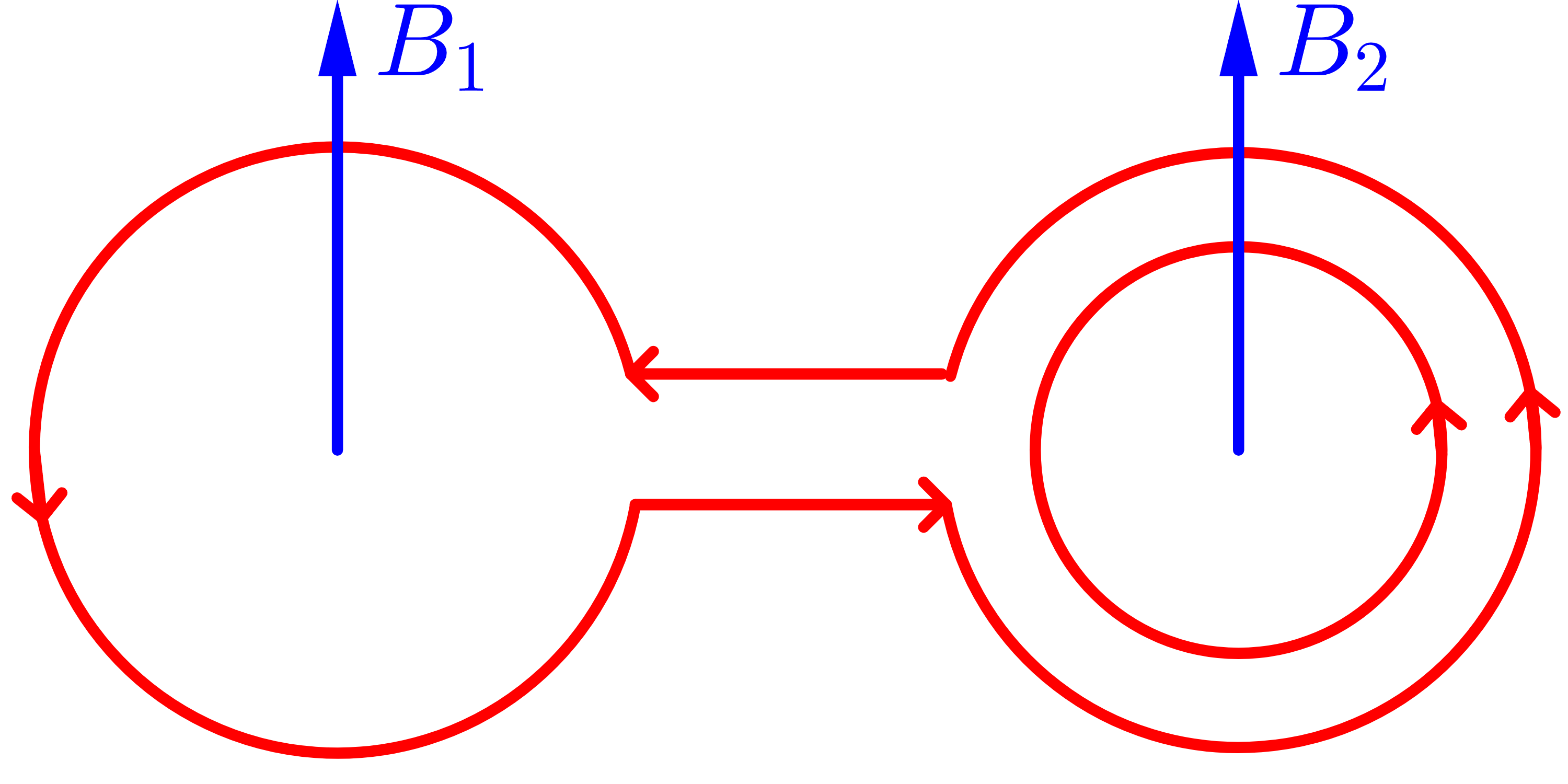}
    \caption{Electrical circuits with non-standard boundary conditions (from left to right): Kirchhoff's circuit with several batteries;
    %(essentially equivalent to a multiport network);
    a circuit on a surface with prescribed voltage drops along %topologically
    nontrivial cycles; % $\omega_1$ and $\omega_2$;
    electromagnetic circuit \cite{Milton-Seppecher-10}.} % by Milton and Seppecher.}
    \label{fig:boundary-conditions-examples}
\end{figure}

Our main results (Theorem~\ref{th-all-minor-matrix-tree} and Corollary~\ref{cor-all-minor-matrix-tree} in Sec.~\ref{sec-all-minor}) give combinatorial formulae for minors of the response matrices of source-synchronized and cohomological networks. While the classical matrix-tree theorems involve spanning trees and forests, our formulas are expressed in terms of spanning subgraphs that contain cycles, similarly to \cite{Kenyon-11,KL22}. The coefficients are given by determinants of certain Gram matrices, encoding the pairing between cycles in these subgraphs and the synchronized sources of the network. In the cohomological setting, this takes a particularly natural form: the source voltage is represented by a cohomology class, and the coefficients record its pairing with the homology classes of cycles in the contributing subgraphs.

For the proof of the main result, we use versions of the following tools from statistical physics:
\begin{itemize} [leftmargin=1.8em]
    \item
    \emph{Curtis--Morrow's connection decomposition} for the response matrix minors;
    \item
    \emph{Smirnov's parafermionic observable} for the uniform spanning tree model;
    \item \emph{Temperley's correspondence} between spanning trees and dimer configurations.
\end{itemize}

We view this toolbox as even a more important contribution than our main theorem itself.

\emph{Curtis--Morrow's connection decomposition} expresses minors of the response matrix in terms of sums over connections between sources. Originally, it was used to show that the signs of those minors characterize response matrices of planar electrical networks (with Dirichlet boundary conditions) \cite{CurtisMorrow} and was one of motivations for studying those minors. The decomposition is valid for arbitrary, not necessarily planar graphs. We generalize it to different boundary conditions (Lemma \ref{l-connections}), which requires additional ideas. See Sec.~\ref{sec-connection-decomposition}.

The resulting sums over connections are then interpreted as \emph{Smirnov's parafermionic observables} for the (weighted) uniform spanning tree model (Lemma~\ref{connection_F_and_L}), which are our main new tool. Such observables for various 2-dimensional models have made a revolution in statistical physics over the last 25 years \cite{Duminil-Copin-13}. Traditionally, they are related to 2-point correlation functions.
A (different) construction of such 2-point observable was also known for the uniform spanning tree model. Our observable is its multi-point generalization, which therefore interprets minors of the response matrix as multi-point correlation functions; cf.~\cite{KL22}. Such multi-point generalizations have been available for a very limited number of models \cite{chelkak2017revisiting,SU-22, SkoUst-24, KSS-25} (including the analogs
for \emph{Minkowskian} metric.) Unlike all previous approaches, our construction is intrinsic: it does not require embedding the graph in the plane or on a surface. See Sec.~\ref{sec-parafermionic}.

Finally, \emph{Temperley's correspondence} between spanning trees and dimer configurations translates the observable into the language of dimers and double-dimers. The original correspondence \cite{Temperley1974, KPW} linked spanning trees in a planar graph to dimer configurations on the union of the (subdivided) graph and its dual. It has been generalized to graphs on surfaces \cite{sun2016, ber2024}. Our variation links spanning trees to a pair of dimer configurations on the same (subdivided) graph (Lemma~\ref{lemma:bijection}), which is similar to the \emph{gliding} from \cite{KPW,Byun-Ciucu-24}, but no longer requires embedding in the plane or a surface. See Sec.~\ref{sec-Temperley}.
This allows to finish the proof of the main theorem in Sec.~\ref{sec-conclusion}.

To summarize, the present paper follows the same paradigm as recent papers by Khristoforov, Smirnov, et al.~\cite{KS-20,KSS-25}: for a known 2-point parafermionic observable, we find an equivalent construction that places it within the general unified framework, generalizing it to a multi-point one (in our case, also to higher dimensions) and leading to new applications. In Sec.~\ref{sec-program}, we propose a \emph{research program on such higher-dimensional $n$-point parafermionic observables.}

\section{Networks}
\label{sec-networks}

In this section, we introduce two related boundary conditions for electrical networks. The first (given in Secs.~\ref{ssec-electrical networks}--\ref{ssec-source-synchronized networks}) is a slight generalization of the one from the original Kirchhoff's work \cite{Kirchhoff-47} on electrical networks; it is elementary and convenient for our proofs. The second
(given in Secs.~\ref{ssec-networks on surfaces}--\ref{ssec-cohomological networks}) is more conceptual but less elementary; it gives the basis for the paper. Each boundary condition is first introduced in the simplest and most visual particular case (Secs.~\ref{ssec-electrical networks} and~\ref{ssec-networks on surfaces}) and then generalized in a concise but more abstract way (Secs.~\ref{ssec-source-synchronized networks} and~\ref{ssec-cohomological networks}).

\begin{figure}
    \centering
\begin{tabular}{ccc}
\begin{tikzpicture}[scale=0.6]
  \draw[color=blue]
  (0,0) to[short, *-*, i_={\tiny $e_1$}] (4,0)
  (4,0) to[short, *-*, i_={\tiny $e_2$}] (2,3)
  (2,3) to[short, *-*, i_={\tiny $e_3$}] (0,0);
  \end{tikzpicture}
  &
\begin{tikzpicture}[scale=0.6]
\draw[color=blue]
%(0,0) to[short, *-*, i_={\tiny $e_1$}] (4,0)
(4,0) to[short, *-*, i_={\tiny $e_2$}] (2,3)
(2,3) to[short, *-*, i_={\tiny $e_3$}] (0,0);
\draw[color=white]
(0,0) to[short, *-*, i_={\tiny $e_1$}] (4,0);
%\node[text=blue] at (2,-1) {\tiny $I(e_1)=I(e_2)=I(e_3)$};
%{[anchor=west] (0,0) node[color=blue] {\tiny $I(e_2)=I(e_3)$}};
\end{tikzpicture}
&
\begin{tikzpicture}[scale=0.6]
\draw[color=blue]
(0,0) to[short, *-*, i_={\tiny $e_1$}] (4,0)
(4,0) to[short, *-*, i_={\tiny $e_2$}] (2,3)
(2,3) to[short, *-*, i_={\tiny $e_3$}] (0,0);
  \draw[-{Latex}, thick, draw=orange] (2,2.3) -- (0.6,0.2);
  \draw[-{Latex}, thick, draw=orange] (3.4,0.2) -- (2,2.3);
  \draw[-{Latex}, thick, draw=orange] (0.6,0.2) -- (3.4,0.2);
  % \draw[->, thick] (B) -- (C) node[midway, right] {$e$};
  % \draw[->, thick] (C) -- (D) node[midway, below left] {$d$};
  % \draw[->, thick] (D) -- (A) node[midway, left] {$a$};
        \node[text=orange] at (2,1) {\tiny $\gamma$};
        %\node[text=blue] at (2,-1.5) {\tiny $\frac{I(e_1)}{c(e_1)}-\mathcal{E}(e_1)\,+\frac{I(e_2)}{c(e_2)}-\mathcal{E}(e_2)\,+\frac{I(e_3)}{c(e_3)}-\mathcal{E}(e_3)=0$};
\end{tikzpicture}\\
 & {\tiny \quad\quad $I(e_1)=I(e_2)=I(e_3)$ \quad\quad}
 & {\tiny \quad $\frac{I(e_1)}{c(e_1)}+\frac{I(e_2)}{c(e_2)}+\frac{I(e_3)}{c(e_3)}=\mathcal{E}(e_1)+\mathcal{E}(e_2)+\mathcal{E}(e_3)$ \quad}
 %& {\tiny \quad $\frac{I(e_1)}{c(e_1)}-\mathcal{E}(e_1)\,+\frac{I(e_2)}{c(e_2)}-\mathcal{E}(e_2)\,+\frac{I(e_3)}{c(e_3)}-\mathcal{E}(e_3)=0$ \quad}
\end{tabular}
    \caption{An electrical circuit with three edges $e_1$, $e_2$, and $e_3$ (left), Kirchhoff's current law (middle), and Kirchhoff's voltage law (right). A cycle $\gamma$ is in orange.} %\mscomm{Rearrange as a 3x2 table.}}
    \label{fig:electrical-network}
\end{figure}

\subsection{Electrical networks}
\label{ssec-electrical networks}

We begin by recalling Kirchhoff's original definition of electrical circuits, restated in modern terminology. See Fig.~\ref{fig:boundary-conditions-examples} to the left.

An electrical circuit is viewed as a graph whose edges conduct electric current and %also 
carry some voltage sources, such as batteries.
The presence of sources leads to natural edge directions (from the negative poles of batteries to the positive ones). Thus, in what follows, we only consider directed graphs, and an \emph{edge} always means a directed edge of the graph. This is a slight abuse of common convention for circuits, but is typical for discrete field theory~\cite{Skopenkov+2023}. We allow loops and multiple edges in the graphs.
By a \emph{simple cycle} in a graph, we mean a simple directed cycle in a graph obtained by reversing the direction of some of the edges of the graph.

\begin{definition}\label{def-electrical} (See Fig.~\ref{fig:electrical-network})
An \emph{electrical circuit} is a finite %weakly
connected directed graph with two numbers, $c(e)>0$ (\emph{conductance}) and $\mathcal{E}({e})\in\mathbb{R}$ (\emph{source voltage}), assigned to each %oriented
edge $e$.

The \emph{current} %in the circuit
is the real %-valued
function $I$ on the set of %oriented
edges determined by the following axioms:
\begin{itemize}
    \item[(I)]\textit{The Kirchhoff current law.} For each vertex $v$, we have  (see Fig.~\ref{fig:electrical-network}, middle)
    $$
    \sum_{e\textrm{ \shcomm{starting at} }v}I(e)-\sum_{e\textrm{ \shcomm{ending at} }v}I(e)=0,
    $$
    where the sums are over all %oriented
    edges $e$ \shcomm{starting at} $v$ and \shcomm{ending at} $v$, respectively.
    Hereafter, an empty sum (respectively, product) is defined to be $0$ (respectively, $1$).
    \item[(V)]\textit{The Kirchhoff voltage law.} For each
    simple cycle $\gamma$, we have (see Fig.~\ref{fig:electrical-network}, right)
    $$
    \sum_{e\textrm{ oriented along }\gamma}\frac{I(e)}{c(e)}-\sum_{e\textrm{ oriented opposite to }\gamma}\frac{I(e)}{c(e)}=\sum_{e\textrm{ \shcomm{oriented along} }\gamma}\mathcal{E}(e)-\sum_{e\textrm{ \shcomm{oriented opposite to} }\gamma}\mathcal{E}(e),
    $$
    where the sums are over all the %oriented
    edges $e$ contained in $\gamma$ that are oriented along $\gamma$ and opposite to $\gamma$, respectively.
\end{itemize}
\end{definition}

The function $I$ satisfying (I) and (V) exists and is unique; see, e.g., Theorem~\ref{th-existence-uniqueness-synchronized} below.

\begin{remark} \label{rem-total-current}
The left side of the equality in
law~(I) is called the \emph{total (outgoing) current at the vertex $v$} and is denoted by $I_v$. Thus, law~(I) means that $I_v=0$ for every vertex~$v$.

    The value ${I(e)}/{c(e)}-\mathcal{E}(e)$ equals the \emph{voltage drop}
    along the edge $e$ (this is \emph{Ohm's law}), and law~(V) means that there are well-defined \emph{voltages} at the vertices (see Sec.~\ref{sec-voltages}). The source voltage $\mathcal{E}(e)$ is alternatively called \textit{electromotive force}.
\end{remark}

\begin{notation*}
   For a subgraph $G$ of the circuit, $c(G):=\prod_{e\subset G}c(e)$ is the product over edges~$e$.
\end{notation*}

\begin{example} \label{ex-triangle} For the circuit in Fig.~\ref{fig:electrical-network}, we easily find
\[
\begin{aligned}
I(e_1)=I(e_2)=I(e_3)
=
\frac{
c(e_1)c(e_2)c(e_3)
\left(\mathcal{E}(e_1)+\mathcal{E}(e_2)+\mathcal{E}(e_3)\right)
}{
c(e_1)c(e_2)+c(e_2)c(e_3)+c(e_3)c(e_1)
}=
\frac{
c\left(
\begin{tikzpicture}[scale=0.07]
\draw[color=blue]
(0,0) to[short, *-*] (4,0)
(0,0) to[short, *-*] (2,3)
(2,3) to[short, *-*] (4,0);
\end{tikzpicture}
\right)
\left(\mathcal{E}(e_1)+\mathcal{E}(e_2)+\mathcal{E}(e_3)\right)
}{
c\left(
\begin{tikzpicture}[scale=0.07]
\draw[color=blue]
(0,0) to[short, *-*] (4,0)
(0,0) to[short, *-*] (2,3);
\end{tikzpicture}
\right)
+
c\left(
\begin{tikzpicture}[scale=0.07]
\draw[color=blue]
(0,0) to[short, *-*] (2,3)
(2,3) to[short, *-*] (4,0);
\end{tikzpicture}
\right)
+
c\left(
\begin{tikzpicture}[scale=0.07]
\draw[color=blue]
(0,0) to[short, *-*] (4,0)
(2,3) to[short, *-*] (4,0);
\end{tikzpicture}
\right)
}.
\end{aligned}
\]
\end{example}

We are interested in currents as functions of source voltages for fixed edge conductances. The resulting linear map
$$L\colon\mathbb{R}^m\to \mathbb{R}^m, \qquad (\mathcal{E}(e_1),\dots,\mathcal{E}(e_m))\mapsto (I(e_1),\dots,I(e_m)),$$
where $e_1,\dots,e_m$ are all edges of the circuit,
is called the \emph{response} of the circuit.
For instance, in Example~\ref{ex-triangle}, the response map has %matrix is
the $3\times 3$ matrix with all the entries $\tfrac{c(e_1)c(e_2)c(e_3)}{c(e_1)c(e_2)+c(e_2)c(e_3)+c(e_3)c(e_1)}$.

In what follows, we distinguish between circuits and networks: informally, a network is a circuit without specified source voltages. So, it is more accurate to speak of a network response rather than a circuit response, because the map $L$ itself does not depend on the source voltages.

\subsection{Source-synchronized networks}
\label{ssec-source-synchronized networks}

Now we slightly generalize Kirchhoff's definition by \emph{synchronizing} some of the sources, that is, by enforcing them to have the same voltage, as in Fig.~\ref{fig:magnet}, and also by setting some of the source voltages to zero.

On the one hand, Kirchhoff's electrical circuits fit into this new setup as a particular case where each synchronized source contains a single edge. On the other hand, source-synchronized circuits can be viewed as a special case of Kirchhoff's circuits when some of the source voltages coincide. However, we refrain from the latter point of view to emphasize %the additional structure resulting from
synchronization.

Note that the edges with vanishing source voltages no longer have any natural direction. However, it is still convenient to direct them. The choice of direction is arbitrary and does not affect any quantities of interest %, such as the network response
(up to sign in some cases).

\begin{definition}
A \textit{source-synchronized network} is a finite connected directed graph with a %positive
number $c(e)>0$ (\emph{conductance}) assigned to each %oriented
edge $e$ and with \(b\) disjoint sets \(\omega^1, \dots, \omega^b\) of %oriented
edges (\textit{synchronized sources}). % whose union does not contain an edge with both orientations.
A \textit{source-synchronized circuit} is a source-synchronized network with $b$ real numbers $\mathcal{E}_{1},\dots,\mathcal{E}_{b}$
(\emph{synchronized source voltages}) assigned to the %synchronized
sources $\omega^1, \dots, \omega^b$.

The \emph{source voltage} is the
real-valued function $\mathcal{E}$ on the set of all %oriented
edges $e$ given by %the formula
\begin{equation*} %\label{eq-E-synchronized}
\mathcal{E}(e):=
\begin{cases}
    \mathcal{E}_j, &\text{if }e\in\omega^j\text{ for some }j,\\
    %-\mathcal{E}_k, &\text{if }\overleftarrow e\in\omega_k\text{ for some }k,\\
    0, &\text{otherwise}.
\end{cases}
\end{equation*}

The \emph{current} $I$ is defined by the same Kirchhoff laws (I) and (V) as in Definition~\ref{def-electrical}.

The \emph{current through the synchronized source $\omega^j$} is
$I_{j} := I_{\omega^j}:= \sum_{e\in \omega^j} I(e).$

The \emph{response} of the source-synchronized network is the linear map $$L\colon \mathbb{R}^b \to \mathbb{R}^b, \qquad (\mathcal{E}_{1},\dots,\mathcal{E}_{b}) \mapsto (I_1, \dots, I_b).$$
Denote by $L$ the matrix of this map as well, and by $L_i^j$ the entry in $i$-th row and $j$-th column. For %a matrix \(M\), and
any \(P,Q\subset\{1,\dots,b\}\), denote by \(L_P^Q\) the submatrix of \(L\) formed by taking the rows indexed by \(P\) and the columns indexed by \(Q\).
We omit the index $Q$ if $Q=\{1,\dots,b\}$.
\end{definition}

\begin{theorem}\label{th-existence-uniqueness-synchronized}
For any source-synchronized circuit, there exists a unique real %-valued
function $I$ on the set of %oriented
edges satisfying the Kirchhoff laws (I) and (V).
\end{theorem}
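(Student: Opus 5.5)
Since the source voltage $\mathcal{E}$ of a source-synchronized circuit is just a real function on the edges, the statement reduces to the classical existence and uniqueness for Kirchhoff's circuits in Definition~\ref{def-electrical}. The synchronization plays no role. I would prove that classical fact by the standard linear-algebra (Hodge / Thomson principle) argument.

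Set up the edge space $\mathbb{R}^E$ with the inner product $\langle f,g\rangle_c=\sum_e f(e)g(e)/c(e)$. Let $Z\subset\mathbb{R}^E$ be the space of functions satisfying law (I), i.e.\ the kernel of the incidence (divergence) map $\partial\colon\mathbb{R}^E\to\mathbb{R}^V$; these are the cycle-space flows. For a simple cycle $\gamma$, let $\chi_\gamma\in Z$ be its signed indicator, with $+1$ on edges oriented along $\gamma$ and $-1$ on edges oriented opposite. Law (V) for $\gamma$ reads $\langle I,\chi_\gamma\rangle_c=\sum_e \chi_\gamma(e)\mathcal{E}(e)=:\ell(\chi_\gamma)$, where $\ell(f)=\sum_e f(e)\mathcal{E}(e)$ is a fixed linear functional.

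The key combinatorial input is that the signed indicators $\chi_\gamma$ of simple cycles span $Z$. This is the standard flow decomposition: a nonzero $f\in Z$ has support containing a cycle of the underlying undirected graph, since a forest carries no nonzero circulation (peel leaves). Subtracting a suitable multiple of $\chi_\gamma$ shrinks the support, and induction finishes. Given this, the conditions on $I$ are equivalent to: $I\in Z$ and $\langle I,z\rangle_c=\ell(z)$ for all $z\in Z$. Restricted to $Z$, $\langle\cdot,\cdot\rangle_c$ is positive definite because $c>0$. By Riesz representation on the finite-dimensional inner product space $(Z,\langle\cdot,\cdot\rangle_c)$, there is exactly one $I\in Z$ representing the functional $\ell|_Z$. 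This gives existence and uniqueness at once.

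Alternatively, uniqueness can be shown directly: the difference $D$ of two solutions lies in $Z$ and satisfies $\langle D,\chi_\gamma\rangle_c=0$ for all $\gamma$. Since the $\chi_\gamma$ span $Z$, this gives $\langle D,D\rangle_c=0$, so $D=0$. Existence also follows from minimizing the energy $\tfrac12\langle f,f\rangle_c-\ell(f)$ over $Z$.

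The only step needing care is the spanning lemma for simple cycles under the paper's convention. Cycles are directed after reversing some edges, and loops and multiple edges are allowed. A loop and a pair of parallel edges are simple cycles of length $1$ and $2$, so the peeling argument goes through unchanged. Connectedness of the graph is not even needed here.
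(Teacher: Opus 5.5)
Your proof is correct, but it takes a different route from the paper's.

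\textbf{The paper's route.} It argues combinatorially. For uniqueness, it takes $\mathcal E=0$, reverses the edges with negative current, and follows edges of positive current (possible by law (I)) until a vertex repeats. This gives a simple cycle with $\sum I(e)/c(e)>0$, contradicting (V). Existence comes from counting equations: law (I) at $n-1$ vertices plus law (V) on the $m-n+1$ fundamental cycles of a spanning tree form a square $m\times m$ system, which is nondegenerate by uniqueness.

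\textbf{Your route.} You encode both laws as the single condition $I\in Z$ with $\langle I,z\rangle_c=\ell(z)$ for all $z\in Z$. Positive definiteness of $\langle\cdot,\cdot\rangle_c$ on $Z$ then gives existence and uniqueness simultaneously.

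\textbf{The spanning lemma.} Your argument needs the fact that simple cycles span $Z$. The paper's step ``it suffices to impose law (V) for the basis simple cycles'' silently uses the same fact, since each $\chi_\gamma$ must be a linear combination of the fundamental cycles. Your version makes explicit what the paper leaves implicit.

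\textbf{What each approach buys.} Your approach yields the Thomson/Dirichlet variational characterization, which matches the energy dissipation form of Sec.~\ref{sec-voltages}. It also does not need connectedness. The paper's approach has a more elementary uniqueness argument: no inner product, just a cycle carrying positive current. It also produces an explicit square system whose coefficients are $\pm1$ and $\pm1/c(e)$. By Cramer's rule this makes it immediate that the currents are rational in the conductances, as invoked in Example~\ref{ex-electric-via-source-synchronized}.
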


\begin{proof} \emph{Uniqueness.} By linearity, it suffices to prove that $\mathcal{E}_1=\dots=\mathcal{E}_b=0$ implies $I=0$. Assume the converse. For each edge $e$ with $I(e)<0$, if any, revert the direction of $e$ and the sign of $I(e)$; this does not affect %the Kirchhoff
laws (I) and (V), as $\mathcal{E}(e)=0$. Then there is an edge $e_1$ with $I(e_1)>0$. Start growing a path from $e_1$. By law (I), there is an edge $e_2$ starting at the endpoint of $e_1$ such that $I(e_2)>0$. Continue this path until we reach a vertex already visited. We get a simple cycle with a positive current through each edge, contradicting law (V). Thus, $I=0$.

\emph{Existence.} Let the network have $n$ vertices and $m$ edges. Fix a spanning tree $T$. For edge $e\notin T$, there is a unique (up to direction reversal) simple cycle in $T\cup e$ called a \emph{basis simple cycle}.
Note that it suffices to impose law (I) for any $n-1$ vertices $v$ and law (V) for $m-n+1$ basis simple cycles $\gamma$.
This gives a system of $m$ linear equations on $m$ unknown currents through all edges. By the uniqueness part, the system has a unique solution for $\mathcal{E}_1=\dots=\mathcal{E}_b=0$. Hence, it is nondegenerate and has a solution for any $\mathcal{E}_1,\dots,\mathcal{E}_b$.
\end{proof}

% \begin{figure}
% \begin{tikzpicture}[line cap=round,line join=round,>=triangle 45,x=2.0cm,y=2.0cm]
% \clip(-3.8,1.25) rectangle (-2.73,2.69);
% \draw [->] (-3.5,1.5) -- (-3.5,2);
% \draw [->] (-3,1.5) -- (-3,2);
% \draw [color=blue,->] (-3.5,1.5) -- (-3,1.5);
% \draw [color=blue,->] (-3.5,2) -- (-3,2);
% \draw [color=blue,->] (-3.25,1.5) -- (-3.25,2);
% %\draw (-3.5,1.5)-- (-3,2);
% \begin{scriptsize}
% \draw[color=black] (-3.59,1.74) node {$\mathcal{E}$};
% \draw[color=black] (-2.92,1.75) node {$\mathcal{E}$};
% \draw[color=blue] (-3.21,1.38) node {$e_1$};
% \draw[color=blue] (-3.25,2.06) node {$e_3$};
% \draw[color=blue] (-3.4,1.8) node {$e_2$};
% \draw[color=blue] (-3.35,1.6) node {$G$};
% \end{scriptsize}
% \end{tikzpicture}
%     \caption{An electrical circuit on the cylinder.}
%     \label{fig:electrical-network-cylinder}
% \end{figure}

\begin{example}
\label{ex:cylinder}
Let us find the currents in the synchronized circuit %shown
in Fig.~\ref{fig:magnet}, middle. %Introduce notation as in Figure~\ref{fig:magnet}.

The synchronized source is $\omega^1=\{e_1,e_3\}$ with the source voltage $\mathcal E$. We direct $e_2$ arbitrarily.

By Kirchhoff's current law, %at one of the two vertices,
we get %vertex $1$:
%\[
$I(e_1) - I(e_1) + I(e_2) = 0$ % \quad \Rightarrow \quad
so that $I(e_2) = 0.$
%\]

By Kirchhoff's voltage law along the loops, we get
%\[
$I(e_1) = c(e_1)\,\mathcal{E}$ and $ %\qquad
I(e_3) = c(e_3)\,\mathcal{E}.$
%\]
%
%Thus, the currents in the network are
%\[
%I(e_1) = c(e_1)\,\mathcal{E}, \qquad I(e_2) = 0, \qquad I(e_3) =  c(e_3)\,%\mathcal{E}.
%\]
%\mscommnew{Compute the response matrix as well}

So the response matrix is the $1\times 1$ matrix
%\[
$L = \begin{pmatrix}
c(e_1)+c(e_3)
\end{pmatrix}.$
%\]
\end{example}

\begin{example} \label{ex-electric-via-source-synchronized}
Consider a classical electrical network with the Dirichlet boundary condition. %In other words, a conductance is assigned to each edge, and
This means that a voltage $U_i$ is assigned to each of a subset of vertices called {\it boundary vertices} $v_i$, where $i = 1, \ldots, b$ (see \cite[Section~2]{PSS}). There are no sources on edges, i.e., $\mathcal E(e) = 0$ for every edge $e$. In this case, the current flows according to the usual Kirchhoff laws. This case, however, can be simulated in our setting as follows.

Introduce an additional vertex $u$, and join it with each boundary vertex $v_i$ by an edge $e_i$
%, and connect each of the boundary vertices $v_i$ to $u$ by edge $e_i$
%\shcomm{directed from $u$ to $v_i$}
of conductance $c(e_i)$, which we eventually tend to infinity.
% and connect each of the boundary vertices $v_i$ to $u$ by edge $e_i$ \shcomm{directed from $u$ to $v_i$} of conductance $c(e_i)$, which we eventually tend to infinity.
Set $\omega^i:=\{e_i\}$ and $\mathcal E_i:=U_i$. % be equal to the voltage originally assigned to vertex $v_i$.

Let us show that in the limit $c(e_i)\to +\infty$, the currents in the resulting source-synchronized circuit reproduce the ones in the original network. We construct an approximation $I(e)$ of the former and verify laws~(I) and~(V). For %each
an edge $e$ of the original network, set $I(e)$ to the original current. For a new edge $e_i$, set $I(e_i)$ to the total current at %leaving
$v_i$ in the original network.

Now, law~(I) holds at all non-boundary vertices %of the source-synchronized circuit
as it did for the original network; %Dirichlet boundary conditions;
it holds for boundary vertices due to our choice of $I(e_i)$; it holds for $u$ because the sum of total
currents %out of
at all boundary vertices %for initial Dirichlet boundary conditions
in the original network was $0$.

Law~(V) holds for any simple cycle not passing through $u$ as it did before. For a simple cycle $\gamma$ passing through $u$, let $v_i$ and $v_j$ be boundary vertices through which it passes. Let $\gamma$ be oriented along $e_i$.
By the properties of networks with the Dirichlet boundary condition, we get
$$\sum_{e \text{ oriented along } \gamma} \frac{I(e)}{c(e)}
-\sum_{e \text{ oriented opposite to } \gamma} \frac{I(e)}{c(e)}
=U_i - U_j+\frac{I(e_i)}{c(e_i)}-\frac{I(e_j)}{c(e_j)}
\to \mathcal E(e_i) - \mathcal E(e_j)$$
as $c(e_i),c(e_j)\to+\infty$.
Thus, %the voltage
law~(V) holds in the limit as all $c(e_i)\to +\infty$.

By simple linear algebra, the currents in the source-synchronized circuit are rational functions in edge conductances and source voltages and have a finite limit as all $c(e_i)\to +\infty$ (see e.g. Proposition~\ref{prop-current} below). Then they tend to $I(e)$, %as all $c(e_i)\to +\infty$,
and thus
reproduce the original currents.
\end{example}

\begin{remark}
A multiport or superport circuit (see \cite[Sections~3--4]{PSS} for the definitions and the notation) can also be realized as a limiting case of a source-synchronized circuit as follows. For every non-root boundary vertex $j$, add an edge $e_j$ from $root(j)$ to $j$, a synchronized source $\omega^j=\{e_j\}$ with the synchronized source voltage $\mathcal E_j=\Delta U_{j,root(j)}$, and let the conductance $c(e_j)$ tend to $+\infty$. In this limit, the resulting source-synchronized circuit reproduces %exactly the voltages and
the currents in the original %superport
circuit. T.~Newton (private communication) has recently found an elegant formula for all minors of the response matrix of a superport circuit.
\end{remark}

\subsection{Networks on surfaces}
\label{ssec-networks on surfaces}

We now turn to another type of boundary condition that arises naturally in networks on surfaces.

It is instructive to revisit the Kirchhoff voltage law~(V) and the expression on the right side. We observe that the current $I$ depends only on the values of this expression for all simple cycles $\gamma$, rather than the source voltages of the individual edges. For instance, if all edges containing a particular vertex are directed towards this vertex, %and are not contained in any synchronized sources,
then adding the same value to the source voltages at those edges %a new synchronized source consisting of these edges
does not affect the current $I$. This operation is known as adding the \emph{coboundary} of the vertex. In other words, %adding a coboundary
the current depends only on the \emph{cohomology class} of source voltages, that is, on the source voltages up to adding coboundaries.

For a circuit on a surface, we set the expression on the right side of the Kirchhoff voltage law~(V) to zero for all trivial cycles $\gamma$ on the surface, that is, for all contractible %null-homologous
simple cycles~$\gamma$. Then the current $I$ depends only on the values for a few basis simple cycles.

It is more geometric to use basis cycles on the surface that are \emph{transversal} to the edges rather than the cycles in the network itself.
Recall that the \emph{crossing number} $\alpha\cdot\beta$ of two transversal regular oriented curves $\alpha$ and $\beta$ on a smooth oriented surface is the sum of signs of their intersection points. The \emph{sign} of an intersection point is $+1$ if the basis formed by the tangent vectors to $\alpha$ and $\beta$ at the point is positively oriented, and $-1$, otherwise. Geometrically,
if at a crossing $\beta$ approaches $\alpha$ from the right, the sign is $+1$, if from the left, it is $-1$; see Fig.~\ref{fig:crossing}.

\begin{figure}
\scalebox{1.3}{
    \centering
    \quad\begin{tikzpicture}[line cap=round,line join=round,>=triangle 45,x=2.0cm,y=2.0cm]
\clip(-3.59,1.26) rectangle (-2.3,2.7);
\draw [color=blue, rotate around={-0.77:(-3.01,1.75)}] (-3.01,1.75) ellipse (1.03cm and 0.5cm);
\draw [shift={(-3.02,2.16)}] plot[domain=3.94:5.48,variable=\t]({1*0.49*cos(\t r)+0*0.49*sin(\t r)},{0*0.49*cos(\t r)+1*0.49*sin(\t r)});
\draw [shift={(-3.02,1.4)}] plot[domain=0.94:2.21,variable=\t]({1*0.45*cos(\t r)+0*0.45*sin(\t r)},{0*0.45*cos(\t r)+1*0.45*sin(\t r)});
\draw [shift={(-3.18,1.59)}] plot[domain=-0.47:0.39,variable=\t]({1*0.2*cos(\t r)+0*0.2*sin(\t r)},{0*0.2*cos(\t r)+1*0.2*sin(\t r)});
\draw [color=blue, shift={(-3.18,1.93)}] plot[domain=-0.42:0.36,variable=\t]({1*0.2*cos(\t r)+0*0.2*sin(\t r)},{0*0.2*cos(\t r)+1*0.2*sin(\t r)});
\draw [shift={(-2.98,2.45)}] plot[domain=4.03:5.33,variable=\t]({1*0.84*cos(\t r)+0*0.84*sin(\t r)},{0*0.84*cos(\t r)+1*0.84*sin(\t r)});
\begin{scriptsize}
\draw[color=black] (-2.96, 1.41) node {$\omega_1$};
\draw[color=black] (-2.38, 1.73) node {$\omega_2$};
\draw[color=blue] (-2.88, 1.91) node {$e_1$};
\draw[color=blue] (-3.3, 2.04) node {$e_2$};
\end{scriptsize}
\end{tikzpicture}
\begin{tikzpicture}[line cap=round,line join=round,>=triangle 45,x=2.0cm,y=2.0cm]
\clip(-3.8,1.25) rectangle (-2.73,2.69);
\draw [->] (-3.5,1.5) -- (-3.5,2);
\draw [->] (-3,1.5) -- (-3,2);
\draw [->] (-3.5,1.5) -- (-3,1.5);
\draw [->] (-3.5,2) -- (-3,2);
\draw [color=blue,->] (-3.25,1.5) -- (-3.25,2);
\draw [color=blue,->] (-3.5,1.75) -- (-3,1.75);
\fill [color=blue] (-3.25,1.75) circle (1.2pt);
%\draw (-3.5,1.5)-- (-3,2);
\begin{scriptsize}
\draw[color=black] (-3.59,1.75) node {$\omega_2$};
\draw[color=black] (-2.89,1.75) node {$\omega_2$};
\draw[color=black] (-3.21,1.43) node {$\omega_1$};
\draw[color=black] (-3.21,2.06) node {$\omega_1$};
\draw[color=blue] (-3.4,1.8) node {$e_2$};
\draw[color=blue] (-3.17,1.6) node {$e_1$};
\draw[color=black] (-3.35,1.6) node {$S$};
\end{scriptsize}
\end{tikzpicture}
    \quad
    \begin{tikzpicture}[>=Stealth, thick,scale=0.4]
  \draw[-{Latex}] (-2,0) -- (2,0) node[above] {\small $\alpha$};    % по оси x
  \draw[-{Latex}, red] (0,-1) -- (0,1) node[left] {\small $\beta$};     % по оси y
  %\fill (-2,0) circle (2pt) node[below left] {$k$};
  %\fill (2,0) circle (2pt) node[below right] {$l$};
  \node at (0,-1.5) {\small $\alpha\cdot \beta=+1$};
\end{tikzpicture}
\quad
\begin{tikzpicture}[>=Stealth, thick,scale=0.4]
  \draw[-{Latex}] (2,0) -- (-2,0) node[above] {\small $\alpha$};    % по оси x
  \draw[-{Latex}, red] (0,-1) -- (0,1) node[left] {\small $\beta$};     % по оси y
  %\fill (-2,0) circle (2pt) node[below left] {$k$};
  %\fill (2,0) circle (2pt) node[below right] {$l$};
  \node at (0,-1.5) {\small $\alpha\cdot \beta=-1$};
\end{tikzpicture}}
    \caption{A circuit on a surface (left) and the sign of a crossing (right). The opposite sides of the square are identified. See Example~\ref{ex-torus-network}.} %\mscomm{$\alpha_1,\beta_1\to\omega_1,\omega_2$. Show the vertex!}}
    \label{fig:crossing}
\end{figure}

\begin{definition} \label{def-surface-circuit}
(See Fig.~\ref{fig:crossing}, left.)
Let $S$ be a connected closed smooth oriented surface equipped with simple closed regular oriented curves $\omega_1,\dots,\omega_{2g}$
%$\alpha_1,\dots,\alpha_g$ and $\beta_1,\dots,\beta_g$
forming a basis of cycles (i.e., a basis of the homology group $H_1(S;\mathbb{Z})$ viewed as a free $\mathbb{Z}$-module).
A \emph{circuit on the surface} is a finite %weakly
  connected embedded directed graph on %embedded into
  the surface~$S$, whose edges are regular curves that intersect the basis cycles %$\alpha_1,\dots,\alpha_g$ and $\beta_1,\dots,\beta_g$
  transversely, with a number $c(e)>0$ %(\redit{conductance})
  assigned to each %oriented
  edge $e$ and real numbers $\mathcal{E}_1,\dots,\mathcal{E}_{2g}$ (\emph{voltage drops across the cycles})
  assigned to the basis cycles in order.
  %$\alpha_1,\dots,\alpha_g$ and $\beta_1,\dots,\beta_g$.

  The \emph{current} %in the circuit
is the real-valued
function $I$ on the set of %oriented
edges given by (I) and
\begin{itemize}%[<+->]
    \item[(V$^*$)] \emph{The Kirchhoff voltage law:} for each
    simple cycle $\gamma$ in the graph, %we have
    $$
    %\hspace{-0.4cm}
    %\sum_e\langle \gamma,e\rangle \frac{I(e)}{c(e)}=
    \sum_{e\textrm{ oriented along }\gamma}\frac{I(e)}{c(e)}-\sum_{e\textrm{ oriented opposite to }\gamma}\frac{I(e)}{c(e)}=
    \sum_j(\gamma\cdot\omega_j)\mathcal{E}_{j}.
    %\sum_k(\gamma\cdot\alpha_k)\mathcal{E}_{k} +\sum_k(\gamma\cdot\beta_k)\mathcal{E}_{g+k}.
    $$
    %    $\sum_{\vec e}R_eI(\vec e)=\sum_{\vec e}\mathcal{E}(\vec e)$,
    %where the sums are over all %the oriented
    %edges $e$ contained in $\gamma$.
\end{itemize}

The \emph{currents across the basis cycles} are $I_j:=\sum_e(e\cdot \omega_j) I(e)$.
    %$I_k:=\sum_e(e\cdot \alpha_k) I(e)$ and $I_{k+g}:=\sum_e(e\cdot \beta_k) I(e)$.

The \emph{response} is the map $L\colon (\mathcal{E}_1,\dots,\mathcal{E}_{2g})\mapsto (I_1,\dots,I_{2g})$.
\end{definition}

The %function $I$ satisfying (I) and (V$^*$)
current $I$ exists and is unique; the proof is the same as for
Theorem~\ref{th-existence-uniqueness-synchronized}.

% \begin{example} For the network in Figure~\ref{fig:crossing}, we find \mscomm{more details!}
% \begin{align*}
%         e_1\cdot \alpha_1&=-1 & e_1\cdot \beta_1&=0 & I(e_1)&=-c(e_1)\mathcal{E}_1 & I_1&=c(e_1)\mathcal{E}_1 \\
%         e_2\cdot \alpha_1&=0 & e_2\cdot \beta_1&=1 & I(e_2)&=c(e_2)\mathcal{E}_2 & I_2&=c(e_2)\mathcal{E}_2 ;
% \end{align*}
% $$L=\begin{pmatrix}
%         c(e_1) & 0     \\
%         0      & c(e_2)
%     \end{pmatrix}.$$
% \end{example}

\begin{example}\label{ex-torus-network}
Consider the network in Fig.~\ref{fig:crossing} with edges $e_1,e_2$ of conductances $c(e_1),c(e_2)$ crossing the basis cycles $\omega_1, \omega_2$ that follow the cuts, so that
\[
e_1 \cdot \omega_1 = -1, \quad e_1 \cdot \omega_2 = 0, \qquad
e_2 \cdot \omega_1 = 0, \quad e_2 \cdot \omega_2 = 1,
\]
with voltage drops $\mathcal{E}_1,\mathcal{E}_2$ across cycles $\omega_1,\omega_2$.

The edges $e_1$ and $e_2$ are the cycles in the network; by the voltage law (V$^*$) for them,
%\[
%\sum_e ( \gamma \cdot e ) \frac{I(e)}{c(e)} = \sum_k (\gamma \cdot \omega_k) \mathcal{E}_k,
%\]
we get
\[
I(e_1) = -c(e_1)\mathcal{E}_1, \qquad I(e_2) = c(e_2)\mathcal{E}_2.
\]
The currents across the cycles are
\[
I_1 = \sum_e (e \cdot \omega_1) I(e) = c(e_1)\mathcal{E}_1, \qquad
I_2 = \sum_e (e \cdot \omega_2) I(e) = c(e_2)\mathcal{E}_2.
\]
Then the response matrix is
\[
L = \begin{pmatrix}
c(e_1) & 0 \\
0 & c(e_2)
\end{pmatrix}.
\]

If we pick a different basis $\omega_1' = \omega_1 + 2\omega_2$, $\omega_2' = \omega_2$ of the homology group $H_1(S;\mathbb{Z})$, then
\[
e_1 \cdot \omega_1' = -1, \quad e_1 \cdot \omega_2' = 0, \qquad
e_2 \cdot \omega_1' = 2, \quad e_2 \cdot \omega_2' = 1.
\]
%and we correspondingly set $\mathcal{E}_1' = \mathcal{E}_1$, $\mathcal{E}_2' = \mathcal{E}_2 - 2\mathcal{E}_1$ to obtain the same currents $I(e_1) = -c(e_1)\mathcal{E}_1$ and $I(e_2) = c(e_2)\mathcal{E}_2$.
In terms of voltage drops $\mathcal{E}_1'$ and $\mathcal{E}_2'$ across $\omega_1'$ and $\omega_2'$, the currents are $I(e_1) = -c(e_1)\mathcal{E}_1'$ and $I(e_2) = c(e_2)(2\mathcal{E}_1'+\mathcal{E}_2')$.
The response matrix becomes
\[
L' = \begin{pmatrix}
c(e_1) + 4c(e_2) & 2c(e_2) \\
2c(e_2) & c(e_2)
\end{pmatrix}
= A L A^\mathrm{T}, \qquad A = \begin{pmatrix} 1 & 2 \\ 0 & 1\end{pmatrix},
\]
where $A$ is the matrix of the change of basis. %; in particular, $\det L' = \det L$.
The entries of $L'$ involve integer coefficients greater than $1$; a less trivial example of this phenomenon was provided by W.Y.~Lam et al.~\cite[Sec.~9]{Lam-25}.
\end{example}

In general, if we switch from one choice of homology basis $\omega=(\omega_1,\dots,\omega_{2g})^\mathrm{T}\subset H_{1}(S;\mathbb{Z})$ to a different one $\omega'=(\omega'_1,\dots,\omega'_{2g})^\mathrm{T}$
and transform the vector of voltage drops $\mathcal{E}=(\mathcal{E}_1,\dots,\mathcal{E}_{2g})^\mathrm{T}$ suitably, we get the same currents. Indeed, let
the linear change be described by a matrix~$A$, i.e., $A \omega = \omega'$.
If $\mathcal{E}' = (A^\mathrm{T})^{-1} \mathcal{E}$, then
%since in this case
$(\gamma \cdot \omega')^\mathrm{T} \mathcal{E}' = (\gamma \cdot \omega)^\mathrm{T} A^\mathrm{T} (A^\mathrm{T})^{-1} \mathcal{E} = (\gamma \cdot \omega)^\mathrm{T}  \mathcal{E}$ for any simple cycle $\gamma$, and the contribution on the right side of (V$^*$) is the same.

Thus, the same system can be defined using any homology basis. It is often natural to choose curves $\omega_1,\dots,\omega_{2g}$ forming a symplectic basis of $H_{1}(S;\mathbb{Z})$ so that cutting the surface along them results in a disk; such choices are always possible. In the latter case, the circuit is viewed as a \emph{discrete Riemann surface}. The complex numbers $A_k:=\mathcal{E}_{k+g}-iI_{k}$ and $B_k:=\mathcal{E}_{g}+iI_{k+g}$, where $k=1,\dots,g$, are then called \emph{A-} and \emph{B-periods} respectively. A-periods determine B-periods. The \emph{(discrete) period matrix} %$\Pi$
is a $g\times g$ matrix whose $i$-th column is the B-period of the circuit with A-periods $A_j=\delta^i_j$. See \cite{Lam-25} for a comprehensive theory.
%\mscomm{The choices of basis. The case when cutting results in a disk, zippers.}

\begin{figure}
\scalebox{0.75}{\begin{tikzpicture}[scale=1.4,>=stealth,->]
\begin{scope}
  \foreach \i in {0,...,7} {
    \coordinate (P\i) at (90-45*\i:2);
  }
  % Подписи рёбер
  \path (P0) -- (P1) node[midway,above right] {$\omega_1$};
  \path (P1) -- (P2) node[midway, right] {$\omega_2$};
  \path (P2) -- (P3) node[midway, right] {$\omega_1$};
  \path (P3) -- (P4) node[midway,below right] {$\omega_2$};
  \path (P4) -- (P5) node[midway,below left] {$\omega_3$};
  \path (P5) -- (P6) node[midway, left] {$\omega_4$};
  \path (P6) -- (P7) node[midway, left] {$\omega_3$};
  \path (P7) -- (P0) node[midway,above left] {$\omega_4$};
  % Стрелки (ориентация)
  \draw[-{Latex}] (P0) -- (P1);
  \draw[-{Latex}] (P1) -- (P2);
  \draw[-{Latex}] (P3) -- (P2);
  \draw[-{Latex}] (P4) -- (P3);
  \draw[-{Latex}] (P4) -- (P5);
  \draw[-{Latex}] (P5) -- (P6);
  \draw[-{Latex}] (P7) -- (P6);
  \draw[-{Latex}] (P0) -- (P7);

  \draw[{Latex}-,red] (0,0) -- ($ (P0)!0.5!(P1) $);

  \coordinate (alpha1opp) at ($ (P2)!0.5!(P3) $);
  \coordinate (gamma1) at ($ (P4)!0.5!(P5) $);
  \coordinate (delta1) at ($ (P7)!0.5!(P0) $);
  \coordinate (delta1opp) at ($ (P5)!0.5!(P6) $);
  \coordinate (gamma1opp) at ($ (P6)!0.5!(P7) $);
  \coordinate (beta1opp) at ($ (P3)!0.5!(P4) $);
    \coordinate (gamma1_1) at ($ (P4)!0.25!(P5) $);
    \coordinate (gamma1_2) at ($ (P4)!0.75!(P5) $);
    \coordinate (beta1) at ($ (P1)!0.5!(P2) $);

    \draw[{Latex}-,red] (alpha1opp) -- node[midway,above]{$\gamma_1$} (gamma1)
    coordinate[midway] (midpoint);
\draw[{Latex}-,red] (alpha1opp) -- (gamma1);

\draw[{Latex}-,red] (gamma1opp) -- (0,0);

\draw[{Latex}-,blue] (0,0) -- (beta1);

\coordinate (mid1) at ($(beta1opp)!0.5!(gamma1_1)$);
\draw[{Latex}-,blue] (beta1opp) -- (gamma1_1);

\coordinate (gamma1opp_1) at ($ (P6)!0.75!(P7) $);
\coordinate (gamma1opp_2) at ($ (P6)!0.25!(P7) $);

\draw[{Latex}-,blue] (gamma1opp_1) -- (delta1) node[below]{$\gamma_2$};

\draw[{Latex}-,blue] (delta1opp) -- (gamma1_2);

\draw[{Latex}-,blue] (gamma1opp_2) -- (0,0);
\fill (0,0) circle (1pt);
\node[anchor=north west, align=left] at (-1.4,-2.5) {%(-2.1,-2.8) {%\scriptsize
$
\det L_{13}^{13}=4\,c(\gamma_1)c(\gamma_2)
%L=
%\begin{pmatrix}
%c(\gamma_1) & 0 & c(\gamma_1) & 0 \\
%0 & c(\gamma_2) & 2c(\gamma_2) & c(\gamma_2) \\
%c(\gamma_1) & 2c(\gamma_2) & c(\gamma_1)+4c(\gamma_2) & 2c(\gamma_2) \\
%0 & c(\gamma_2) & 2c(\gamma_2) & c(\gamma_2)
%\end{pmatrix}
$
};
\end{scope}
\begin{scope}[xshift=6cm]
  % Восьмиугольник на окружности радиуса 2
  \foreach \i in {0,...,7} {
    \coordinate (P\i) at (90-45*\i:2);
  }
  % Подписи рёбер
  \path (P0) -- (P1) node[midway,above right] {$\omega_1$};
  \path (P1) -- (P2) node[midway, right] {$\omega_2$};
  \path (P2) -- (P3) node[midway, right] {$\omega_1$};
  \path (P3) -- (P4) node[midway,below right] {$\omega_2$};
  \path (P4) -- (P5) node[midway,below left] {$\omega_3$};
  \path (P5) -- (P6) node[midway, left] {$\omega_4$};
  \path (P6) -- (P7) node[midway, left] {$\omega_3$};
  \path (P7) -- (P0) node[midway,above left] {$\omega_4$};
  % Стрелки (ориентация)
  \draw[-{Latex}] (P0) -- (P1);
  \draw[-{Latex}] (P1) -- (P2);
  \draw[-{Latex}] (P3) -- (P2);
  \draw[-{Latex}] (P4) -- (P3);
  \draw[-{Latex}] (P4) -- (P5);
  \draw[-{Latex}] (P5) -- (P6);
  \draw[-{Latex}] (P7) -- (P6);
  \draw[-{Latex}] (P0) -- (P7);

\draw[{Latex}-,red] (0,0) -- ($ (P0)!0.5!(P1) $) node[midway,above left] {$e_2$};

  \coordinate (alpha1opp) at ($ (P2)!0.5!(P3) $);
  \coordinate (gamma1) at ($ (P4)!0.5!(P5) $);
  \coordinate (delta1) at ($ (P7)!0.5!(P0) $);
  \coordinate (delta1opp) at ($ (P5)!0.5!(P6) $);
  \coordinate (gamma1opp) at ($ (P6)!0.5!(P7) $);
  \coordinate (beta1opp) at ($ (P3)!0.5!(P4) $);
    \coordinate (gamma1_1) at ($ (P4)!0.25!(P5) $);
    \coordinate (gamma1_2) at ($ (P4)!0.75!(P5) $);
    \coordinate (beta1) at ($ (P1)!0.5!(P2) $);

\draw[{Latex}-,red] (alpha1opp) -- node[midway,above] {} (gamma1)
    coordinate[midway] (midpoint);
\draw[{Latex}-,red] (alpha1opp) -- (midpoint) node[midway,above] {$e_2$};
\draw[{Latex}-,red] (midpoint) -- (gamma1) node[midway,above] {$e_1$};

\draw[{Latex}-,red] (gamma1opp) -- (0,0) node[midway,above right] {$e_1$};

\draw[{Latex}-,blue] (0,0) -- (beta1) node[midway,above] {$e_6$};

\coordinate (mid1) at ($(beta1opp)!0.5!(gamma1_1)$);
\draw[{Latex}-,blue] (beta1opp) -- (mid1) node[midway,above] {$e_6$};
\draw[{Latex}-,blue] (mid1) -- (gamma1_1) node[midway,above] {$e_5$};

\fill (mid1) circle (1pt); % node[midway,below] {\small $G$};

\node[midway,below] at (mid1) {\small $G$};

\coordinate (gamma1opp_1) at ($ (P6)!0.75!(P7) $);
\coordinate (gamma1opp_2) at ($ (P6)!0.25!(P7) $);

\coordinate (mid2) at ($(gamma1opp_1)!0.5!(delta1)$);
\draw[{Latex}-,blue] (gamma1opp_1) -- (mid2) node[midway,below] {$e_5$};
\draw[-{Latex},blue] (mid2) -- (delta1) node[midway,below] {$e_4$};
\fill (mid2) circle (1pt);

\coordinate (mid) at ($(delta1opp)!0.5!(gamma1_2)$);
\draw[-{Latex},blue] (delta1opp) -- (mid) node[midway,above right] {$e_4$};
\draw[{Latex}-,blue] (mid) -- (gamma1_2) node[midway,above right] {$e_3$};
\fill (mid) circle (1pt);

\draw[{Latex}-,blue] (gamma1opp_2) -- (0,0) node[midway,below] {$e_3$};
\fill (0,0) circle (1pt);
\fill (midpoint) circle (1pt);
\node[anchor=west] at (2.5,0) {
  \shortstack[l]{
    $\omega^1 = \{e_2\}$\\
    $\omega^2 = \{e_6\}$\\
    $\omega^3 = \{e_1, e_3, e_5\}$\\
    $\omega^4 = \{e_4\}$\\
    $\gamma_1 = (e_1, e_2)$\\
    $\gamma_2 = (e_3, e_4, e_5, e_6)$
  }
};
\node[anchor=north] at (0,-2.5) { %(0,-2.8) {%\scriptsize
$\det L_{13}^{13}=\frac{4c(e_1)c(e_2)c(e_3)c(e_4)c(e_5)c(e_6)}{\sum_T c(T)}$
};
\end{scope}
\end{tikzpicture}}
    \caption{A network on a double torus (left) and its refinement (right). The sides of the octagon with the same labels are identified. Two original edges $\gamma_1$ and $\gamma_2$ (red and blue, forming a basis $\Gamma$ of simple cycles of the graph) are subdivided into
    several ones, each intersecting the octagon boundary once.
    %edges $e_1,e_2$ and $e_3,e_4,e_5,e_6$, respectively.
    The resulting network can be viewed as source-synchronized network $G$ with the synchronized sources $\omega^1,\dots,\omega^4$ listed to the right. %The graph $G$ has multiplicity greater than~$1$ in the minor $L^{13}_{13}$ of the response matrix.
    See Examples~\ref{ex-double-torus}, \ref{ex-sufaces-via-synchronized}, and~\ref{ex-network-on-double-torus}.}
    \label{fig:network-on-double-torus}
\end{figure}

\begin{example} \label{ex-double-torus}
    Consider the network in Fig.~\ref{fig:network-on-double-torus} to the left with edges $\gamma_1,\gamma_2$ of conductances $c(\gamma_1),c(\gamma_2)$ lying on a double torus with basis cycles $\omega_1,\dots,\omega_4$. Its response matrix is
    $$
L=
\begin{pmatrix}
c(\gamma_1) & 0 & c(\gamma_1) & 0 \\
0 & c(\gamma_2) & 2c(\gamma_2) & -c(\gamma_2) \\
c(\gamma_1) & 2c(\gamma_2) & c(\gamma_1)+4c(\gamma_2) & -2c(\gamma_2) \\
0 & -c(\gamma_2) & -2c(\gamma_2) & c(\gamma_2)
\end{pmatrix}.
$$
\end{example}

In this example, some edges intersect more than one of the basis cycles $\omega_j$ %or perhaps even
%and some
or intersect the same $\omega_j$ more than once. In such situations,
we introduce %let us consider
a refined circuit, where we break single edges into several edges connected in series, so that in the result no edge intersects more than one $\omega_j$, and no edge intersects the same $\omega_j$ more than once. See Fig.~\ref{fig:network-on-double-torus} to the right. For each edge $e$ of the original circuit that got broken into $e_1, \ldots, e_{\ell}$, choose conductances for the resulting edges %\emph{subdivision edges}
so that $\sum_{k=1}^{\ell}\frac{1}{c(e_k)} = \frac{1}{c(e)}$.
% \begin{equation}\label{eq-ex-sufaces-via-synchronized}
% \frac{1}{c(e)} = \frac{1}{c(e_1)} + \dotsc + \frac{1}{c(e_{\ell})}.
% \end{equation}
Orient each %subdivision
edge $e_k$ %of the circuit
so that $e_k \cdot \omega_j = +1$ for the unique $\omega_j$ that intersects $e_k$.
%where $\omega_j$ is the only basis cycle that intersects $e_k$.
The resulting circuit is called a {\it {refined circuit on the surface}}. The current in the refined circuit is the same as in the original one, in the sense that $I(e_k) = I(e)$ for $k=1,\dots,\ell$, because $\sum_{k=1}^{\ell}{I(e_k)}/{c(e_k)} = {I(e)}/{c(e)}$ and hence law (V$^*$) is preserved.

\begin{example} \label{ex-sufaces-via-synchronized} (See Fig.~\ref{fig:network-on-double-torus}.)
%Let us show that
Any circuit on a surface can be simulated by a source-synchronized circuit.
Indeed, we refine the circuit on the surface and create synchronized sources as follows: for each $\omega_j$, unify edges $e_k$ with $e_k \cdot \omega_j = +1$ into a synchronized source, with synchronized source voltage $\mathcal{E}_j$ equal to the voltage drop across the original cycle~$\omega_j$.
Clearly, then laws (V) and (V$^*$) are equivalent, laws~(I) are identical, hence the currents are the same.
\end{example}

We conclude this subsection with an informal physical remark. The ratio $I(e)/c(e)$ in the Kirchhoff voltage law~(V$^*$) has the meaning of \emph{voltage drop} $V(e)$ along the edge $e$.
In our setup, we have a linear relation $V(e)=I(e)/c(e)$ (known as \emph{Ohm's law}), but in general nonlinear networks, $V(e)$ can be any function $V(I(e))$ of $I(e)$,
%This function $V\colon \mathbb{R}\to\mathbb{R}$ is
known as \emph{voltage-current characteristic} of the edge $e$. We can also view the voltage-current characteristic as a functional $V(I)$ that takes the real-valued function $I$ on the set of edges to the real-valued function $V$ on the set of edges. In our linear setup, this functional just divides the value of $I(e)$ by $c(e)$. This linear functional can be viewed as the \emph{discrete Hodge star}. % on the discrete Riemann surface.
It motivates the definition in the next subsection.

\subsection{Cohomological networks}
\label{ssec-cohomological networks}

The definitions from Secs.~\ref{ssec-electrical networks} and~\ref{ssec-networks on surfaces} can be reformulated intrinsically in terms of chains, cochains, and cohomology, eliminating the need for embeddings, crossing numbers, and auxiliary zipper constructions.

This cohomological formulation is more conceptual and slightly more general. Let us recall some standard notions (for %more %comprehensive elementary introduction,
details, see \cite{Skopenkov+2023}, and for general theory, see \cite{fomenko2016homotopical}).

Our setup involves a higher-dimensional generalization of a graph, called a (finite) \emph{cell complex}.
It is a topological space defined by induction as follows.
A \emph{$0$-dimensional cell complex} $S_0$ is a finite discrete set. A \emph{$k$-dimensional cell complex} $S_k$ is obtained from a $(k-1)$-dimensional cell complex $S_{k-1}$ by attaching finitely many $k$-dimensional topological disks $D^k_i$ by continuous maps $\phi_i\colon \partial D_i^k\to S_{k-1}$. In the following definitions, to minimize technicalities, we assume that %the maps
$\phi_i$ are injective for $k=2$, and postpone the case of non-injective maps till Remark~\ref{rem-arbitrary-phi}.

The interiors $%\mathring{D}^k_i:=
{D}^k_i\setminus \partial{D}^k_i$ of the disks $D^k_i$
are called \emph{$k$-dimensional cells} or \emph{$k$-cells}. Each cell is an embedded open disk in the cell complex. An \emph{oriented cell complex} $S$ is a cell complex equipped with a fixed decomposition into cells and a fixed orientation of each cell. In what follows, we consider an oriented cell complex $S$, and a \emph{cell} always means an oriented cell.

A \emph{$k$-chain} is a real-valued function $I$ on the set of $k$-cells. (Traditionally, it is viewed as a formal linear combination $\sum_e I(e)e$ of $k$-cells $e$ with real coefficients $I(e)$. Sometimes, we switch to this traditional notation $\sum_e I(e)e$ for chains. In what follows, we focus on real coefficients; the construction for integer coefficients is completely analogous.) Denote by $C_k(S;\mathbb{R})$ the real vector space of $k$-chains.
Introduce the \emph{boundary operator}
$\partial\colon C_1(S;\mathbb{R}) \to C_{0}(S;\mathbb{R})$, $I\mapsto \partial I$, by
%by the formula
\[
[\partial I](v)=\sum_{e\textrm{ ending at }v}I(e)-\sum_{e\textrm{ starting at }v}I(e)
\]
for each vertex ($0$-cell) $v$, where the sums are over edges ($1$-cells) $e$.
A chain $I \in C_1(S;\mathbb{R})$ satisfying $\partial I = 0$ is called a \emph{cycle}.
Denote by $Z_1(S;\mathbb{R})=\ker \partial\subset C_1(S;\mathbb{R})$
the space of cycles.

In our setting, a current $I$ assigns a real number to each %oriented
edge, hence it is naturally a
$1$-chain. Kirchhoff's current law~(I) is exactly the condition $\partial I=0$, so a current is a $1$-cycle. %This justifies our non-traditional notation for chains.

Introduce the \emph{boundary operator} $\partial\colon C_2(S;\mathbb{R}) \to C_{1}(S;\mathbb{R})$, $I\mapsto \partial I$, by
$$
    [\partial I](e) =
    \sum_{f:\partial f\textrm{ oriented along }e}I(f)-\sum_{f:\partial f\textrm{ oriented opposite to }e}I(f)
$$
    for each $1$-cell $e$, where the sums are over all the $2$-cells $f$
    whose boundary $\partial f$ contains $e$ and is oriented along $e$ and opposite to $e$, respectively. (Here, we essentially rely on the injectivity of the %attaching
    maps $\phi_i$ to guarantee that
    %used the assumption that attaching maps $\phi_i$ are injective, so that
    the fixed orientation of $f$ determines an %well-defined
    orientation of $\partial f$.)

Introduce the \emph{first homology group} as the quotient
\[
H_1(S;\mathbb{R}) = Z_1(S;\mathbb{R}) / \partial C_{2}(S;\mathbb{R}).
\]

In our setting, the \emph{homology class} of the current $I$, that is, its equivalence class in $%[I]\in
H_1(S;\mathbb{R})$, is a measure of the current ``along nontrivial cycles in $S$''.

We also need a dual perspective. Traditionally, the space $C^k(S;\mathbb{R})$ of \emph{cochains} is viewed as the dual space of $C_k(S;\mathbb{R})$ of chains. However, the latter has a distinguished basis and can hence be identified with its dual. Therefore, we define a \emph{$k$-cochain} $U$ as a real-valued function on the set of $k$-cells again and denote by $C^k(S;\mathbb{R})$ the real linear space of such functions.
%(Traditionally, it is viewed as the dual space of $C_k(S;\mathbb{R})$, but in our setup, the latter comes with a distinguished basis and hence can be identified with the dual. We, however, keep different notations )

The \emph{coboundary operator}
$
\delta\colon C^0(S;\mathbb{R}) \to C^{1}(S;\mathbb{R})
$, $U\mapsto \delta U$,
is defined by
$$
[\delta U](e)=U(v_j)-U(v_i),
$$
where $e$ is an edge starting at a vertex $v_i$ and ending at a vertex $v_j$.
%, and $v_i$ and $v_j$ are its starting point and endpoint, respectively.

The \emph{coboundary operator}
$
\delta\colon C^1(S;\mathbb{R}) \to C^{2}(S;\mathbb{R})
$,
$\mathcal{E}\mapsto\delta \mathcal{E}$,
is defined by
$$
[\delta \mathcal{E}](f)=
    \sum_{e\textrm{ oriented along }\partial f}\mathcal{E}(e)-\sum_{e\textrm{ oriented opposite to }\partial f}\mathcal{E}(e),
$$
where $f$ is a $2$-cell, and we sum over $1$-cells $e$  oriented along and opposite to $\partial f$, respectively.
Here, an element of $C^1(S;\mathbb{R})$ is denoted by $\mathcal{E}$ for similarity to Kirchhoff's voltage law~(V).
%This is similar to the expression in Kirchhoff's voltage law~(V).

A cochain $\mathcal{E}\in C_1(S;\mathbb{R})$ satisfying $\delta \mathcal{E} = 0$ is called a \emph{cocycle}.
Denote by $Z^1(S;\mathbb{R})=\ker \delta\subset C^1(S;\mathbb{R})$
the space of cocycles. Introduce the \emph{first cohomology group} as the quotient
\[
H^1(S;\mathbb{R}) = Z^1(S;\mathbb{R}) / \delta C^{0}(S;\mathbb{R}).
\]

One can see that it is the dual space of the homology group $H_1(S;\mathbb{R})$; in particular, the two groups are non-canonically isomorphic (beware, this is no longer the case for integer coefficients).

Similarly to Sec.~\ref{ssec-networks on surfaces}, it is natural to set the expression on the right side of Kirchhoff's voltage law (V) to zero, if the simple cycle $\gamma$ is the boundary of a $2$-cell. Then the source voltages $\mathcal{E}(e)$ form a cocycle. We have already observed %above
that the current $I$ depends only on the cohomology class of this cocycle. Thus, we conceptualize the source voltage as a cohomology class.

\begin{definition} \label{def-homological}
    A \textit{cohomological (electrical) network} is a finite oriented %connected 1- or 2-dimensional
    cell complex $S$ with a number $c(e)>0$ (\textit{conductance}) assigned to each edge~$e$.
    A \textit{cohomological circuit} is a cohomological network with a distinguished cohomology class $\mathcal{E}\in H^1(S;\mathbb{R})$
    (\textit{source voltage}).

    The \emph{voltage-current characteristic} %of the cohomological network
    is the linear map $V\colon C_1(S;\mathbb{R})\to C^1(S;\mathbb{R})$
    given by the formula $[V(I)](e)=I(e)/c(e)$
    for all $I\in C_1(S;\mathbb{R})$ and all edges $e$ (or, in the traditional notation,
    $\left[V\left(\sum_{e'}I(e')e'\right)\right](e)=I(e)/c(e)$, where $e$ and $e'$ are edges and $\sum_{e'}I(e')e'$ is a $1$-chain).

    The \emph{current} in the circuit is the chain $I\in C_1(S;\mathbb{R})$ determined by the following axioms:
\begin{itemize}
    \item[(I)]\textit{The Kirchhoff current law.} $I$ is a cycle.
    \item[(V)]\textit{The Kirchhoff voltage law.} $V(I)$ is a cocycle and its cohomology class is $\mathcal{E}$.
\end{itemize}
The \emph{response} of the cohomological network is the map $L\colon H^1(S;\mathbb{R})\to H_1(S;\mathbb{R})$ that takes the source voltage $\mathcal{E}$ to the homology class of the current $I$.
\end{definition}

\begin{theorem} \label{th-existence-uniqueness-cohomological}
For any cohomological circuit, there is a unique %$1$-
chain~$I$ satisfying %conditions
(I) and~(V).
\end{theorem}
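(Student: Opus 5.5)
We will prove this by Hodge-theoretic linear algebra on $C_1(S;\mathbb{R})$, using the inner product $\langle I,J\rangle_c:=\sum_e I(e)J(e)/c(e)$. Identify cochains with chains via the distinguished basis. Then the voltage-current characteristic $V$ is an isomorphism $C_1\to C^1$, namely $I\mapsto I/c$. The pairing $\langle I, V(J)\rangle=\sum_e I(e)J(e)/c(e)$ is positive definite.

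Fix a representative cocycle $\mathcal{E}_0\in Z^1$ of the class $\mathcal{E}$. Conditions (I) and (V) then say that $I\in Z_1=\ker\partial$ and that $V(I)-\mathcal{E}_0\in\delta C^0$. In other words, we must find $I\in\ker\partial$ and $U\in C^0$ with $V(I)=\mathcal{E}_0+\delta U$. Note that $\mathcal{E}_0+\delta U$ is automatically a cocycle, since $\delta\delta=0$. This identity follows directly from the injectivity convention for the $2$-cells: going around $\partial f$, each vertex contributes $+U(v)$ and $-U(v)$.

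\emph{Uniqueness.} By linearity, it suffices to take $\mathcal{E}=0$. Then $V(I)=\delta U$ with $\partial I=0$. The key step is the adjointness identity
\[
\sum_e I(e)\,[\delta U](e)=\sum_v U(v)\,[\partial I](v)\cdot(\pm1),
\]
which is a direct expansion: both sides equal $\sum_{e: v_i\to v_j}I(e)(U(v_j)-U(v_i))$. The sign is $+1$ with the paper's convention for $\partial$. Therefore
\[
\sum_e I(e)^2/c(e)=\sum_e I(e)[\delta U](e)=\sum_v U(v)[\partial I](v)=0,
\]
which forces $I=0$ because $c(e)>0$.

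\emph{Existence.} We use a dimension count, or equivalently orthogonal decomposition. Consider the linear map
\[
\Phi\colon Z_1\oplus C^0/\ker\delta\to C^1,\qquad (I,U)\mapsto V(I)-\delta U.
\]
We claim $\Phi$ is injective: if $V(I)=\delta U$, then the argument above gives $I=0$, and hence $\delta U=0$. Its image therefore has dimension $\dim Z_1+\operatorname{rank}\delta$. Since $\partial$ and $\delta$ on $C_1\leftrightarrow C_0$ are transposes of each other in the standard bases, we have $\operatorname{rank}\partial=\operatorname{rank}\delta$. Thus the image has dimension $\dim Z_1+\operatorname{rank}\partial=\dim C_1=\dim C^1$. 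So $\Phi$ is surjective, and we may solve $V(I)-\delta U=\mathcal{E}_0$. This gives $V(I)$ cocycle-cohomologous to $\mathcal{E}_0$. Independence from the choice of representative $\mathcal{E}_0$ is clear, since changing $\mathcal{E}_0$ by $\delta U'$ is absorbed into $U$.

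An alternative route to existence is to reduce to Theorem~\ref{th-existence-uniqueness-synchronized}: represent $\mathcal{E}_0$ as a source voltage on edges and note that (V) for all simple cycles is equivalent to $V(I)-\mathcal{E}_0$ being exact on the $1$-skeleton. I prefer the self-contained dimension argument above.

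The main obstacle is only bookkeeping. We must check the adjointness sign between the paper's $\partial$ (incoming minus outgoing) and $\delta$ (head minus tail), and confirm that the $2$-cells impose no extra constraint. They impose none, because the cocycle condition is automatic once $V(I)\in\mathcal{E}_0+\delta C^0$. Connectedness of $S$ is not needed.
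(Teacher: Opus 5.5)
Your proof is correct, but it takes a different route from the paper. The paper reruns the proof of Theorem~\ref{th-existence-uniqueness-synchronized}. There, uniqueness is combinatorial: in the homogeneous case, reverse the edges carrying negative current, then use law~(I) to grow a path of positive-current edges until it closes into a simple cycle, on which law~(V) fails. Existence is an equation count relative to a spanning tree: $n-1$ vertex equations and $m-n+1$ fundamental-cycle equations form a square system, which is nondegenerate by uniqueness. The transfer to the cohomological setting rests on the observation that (V) amounts to $V(I)$ and a representative of $\mathcal{E}$ agreeing on the fundamental cycles of the $1$-skeleton. You replace the path-growing argument by the energy identity $\sum_e I(e)[\delta U](e)=\sum_v U(v)[\partial I](v)$, and the spanning-tree count by rank--nullity with $\operatorname{rank}\delta=\operatorname{rank}\partial$. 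Together these amount to the orthogonal decomposition $C^1=V(Z_1)\oplus\delta C^0$ for the inner product $\sum_e c(e)\mathcal{A}(e)\mathcal{B}(e)$.

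Your version has several advantages. It is intrinsic: no spanning tree or fundamental cycles are chosen, so you never need the fact, used tacitly in the paper's transfer, that a $1$-cochain vanishing on all cycles is a coboundary. It does not use connectedness, which matters because the cohomological definition does not require $S$ to be connected, while a spanning-tree count tacitly does. It extends word for word to $k$-chains with a prescribed class in $H^k$, in line with the paper's higher-dimensional program, where a path-growing argument has no obvious analog. It also exhibits $V(I)$ as the energy-minimizing cocycle in the class $\mathcal{E}$. The paper's version, in turn, is fully elementary, stays in the language of simple cycles in which Kirchhoff's laws were first stated, and treats the source-synchronized and cohomological settings with one and the same argument.
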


%This theorem has
The proof is literally the same %proof
as of Theorem~\ref{th-existence-uniqueness-synchronized}, because law (V) is equivalent to $V(I)$ and $\mathcal{E}$ having the same evaluation on any basis simple cycle in the union of 0- and 1-cells of $S$.

\begin{remark} Cells of dimension $>2$ do not affect the first (co)homology of $S$, hence the current $I$, and are therefore irrelevant. However, we still allow higher-dimensional cells; sometimes, this visualizes and simplifies the computation of the first (co)homology.
\end{remark}

\begin{example} %Let us show that
Any %Kirchhoff’s
electric circuit can be simulated by a %$1$-dimensional
cohomological circuit on the same directed graph.
Indeed, we can view the collection of source voltages $\mathcal{E}(e)$ as a %1-cochain (actually, a
1-cocycle, and introduce the source voltage ${\mathcal{E}}\in H^1(S;\mathbb{R})$ as the cohomology class of this cocycle.
By definition, the current in the %resulting
cohomological circuit is the same as in the %original
electric circuit.
%(more accurately, the currents $\widetilde{I}$ and $I$ in the circuits are related via $\widetilde{I}=\sum_e I(e)e$).
\end{example}

\begin{example} Consider a classical electrical network embedded into a disc with the Dirichlet boundary condition, that is, prescribed voltages $U_i$ at the vertices $v_i$ on the boundary of the disc (cf.~Example~\ref{ex-electric-via-source-synchronized}). This case can also be simulated by a cohomological circuit.

Indeed, introduce an additional vertex $u$ and join it with each boundary vertex $v_i$ by an edge $e_i$
%, and connect each of the boundary vertices $v_i$ to $u$ by edge $e_i$
\shcomm{(directed from $u$ to $v_i$)} of conductance $c(e_i)$, which we eventually tend to infinity.
Let~$S$ be the union of the additional edges with the cell decomposition of the disc formed by the original network. Introduce a 1-cocycle ${\mathcal{E}}\in C^1(S;\mathbb{R})$ by the formula
$\mathcal{E}(e)=U_i$ if $e=e_i$, and $\mathcal{E}(e)=0$ otherwise.
%$\langle{\mathcal{E}},e\rangle=U(v_i)$ if $e=e_i$, and $\langle{\mathcal{E}},e\rangle=0$ otherwise.
Let the source voltage ${\mathcal{E}}\in H^1(S;\mathbb{R})$ be the cohomology class of this cocycle.
Then the current in the %resulting
cohomological circuit tends to the one in the original network as $c(e_i)\to +\infty$.

Analogously, we simulate a network embedded into a \emph{ball} (and forming the 1-skeleton of a cell-decomposition of the ball) with the Dirichlet boundary condition. This example demonstrates how $3$-dimensional cells simplify the understanding of the cohomology of $S$.
\end{example}

\begin{example} \label{ex-surface-2-cohomological} Any circuit on a surface $S$ can be simulated by a %$1$-dimensional
cohomological circuit. Here, we add several edges $e_i$ to make each component of the complement to the circuit on $S$ homeomorphic to an open disc. %The conductance $c(e_i)$ is eventually tended to zero.
We set $\mathcal{E}:=\mathcal{E}_1\omega^1+\dots+\mathcal{E}_{2g}\omega^{2g}$, where $\omega^1,\dots,\omega^{2g}\in H^1(S;\mathbb{Z})$ are Poincar\`e dual to the classes of $\omega_1,\dots,\omega_{2g}$ in $H_1(S;\mathbb{Z})$. The current in the resulting cohomological circuit tends to the one in the original circuit as $c(e_i)\searrow 0$.
\end{example}

%We need the following notion. %construction.
Let $\omega^1,\dots,\omega^{b}\in C^1(S;\mathbb{Z})$ be 1-cocycles whose cohomology classes form a basis of $H^1(S;\mathbb{Z})$, which is always a free $\mathbb{Z}$-module.
Write $\mathcal E=\mathcal E_1\omega^1+\dots +\mathcal E_b\omega^b$ for some $\mathcal E_1,\dots,\mathcal E_b\in\mathbb{R}$.
A \emph{refined cohomological circuit} is obtained by breaking each edge $e$ into edges $e_1,\dots,e_\ell$ connected in series and modifying $\omega^1,\dots,\omega^{b}$ %accordingly without changing their cohomology classes,
so that $\sum_{k=1}^{\ell}\omega^j(e_k) = \omega^j(e)$, $\omega^j(e_k)\in\{0,\pm 1\}$ and $\omega^i(e_k)\omega^j(e_k)=0$ for all $i\ne j$ and all edges~$e_k$.
Each edge $e_k$ is then oriented so that $\omega^j(e_k)\in\{0,+1\}$, and the conductances $c(e_k)$ are chosen so that $\sum_{k=1}^{\ell}\frac{1}{c(e_k)} = \frac{1}{c(e)}$. %~\eqref{eq-ex-sufaces-via-synchronized} holds.
The current in the refined circuit is the same as in the original one, in the sense that $I(e_k) = I(e)$ for $k=1,\dots,\ell$.

\begin{example} \label{ex:reduc}
Any cohomological circuit can be simulated by a source-synchronized circuit (cf.~Example~\ref{ex-sufaces-via-synchronized}).
Indeed, %fix a basis $\{\omega^j\}$, %$\omega^1,\dots,\omega^{b}\in C^1(S;\mathbb{Z})$ of $H^1(S;\mathbb{Z})$,
%write $\mathcal E=\mathcal E_1\omega^1+\dots +\mathcal E_b\omega^b$,
refine the circuit, % (see the definition before the proof of Theorem~\ref{th-existence-uniqueness-cohomological}),
and for each \(j=1,\dots,b\), group all subdivision edges $e_k$ with $\omega^j(e_k)=+1$ into one synchronized source with the synchronized source voltage~$\mathcal E_j$. In the resulting source-synchronized circuit, the source voltage %~\eqref{eq-E-synchronized}
can be written as $\mathcal{E}(e)=\sum_{j=1}^b\omega^j(e)\,\mathcal E_j$. Viewed as a $1$-cochain, it coincides with %has the same cohomology class as the cochain
$\mathcal E=\mathcal E_1\omega^1+\dots +\mathcal E_b\omega^b$. Then for any simple cycle $\gamma$, the sum $\sum_{e\subset\gamma}\mathcal{E}(e)$ equals the evaluation of the cohomology class of $\mathcal E$ on~$\gamma$. Thus, the source-synchronized law~(V) and the cohomological law~(V) are equivalent. Laws~(I) are identical.
%Under this construction,
Hence, the currents in the circuits are the same. %in natural bijection.
\end{example}

% \begin{example} \label{ex:reduc}
% Any cohomological circuit can be simulated by a source-synchronized circuit (cf.~Example~\ref{ex-sufaces-via-synchronized}). Indeed, apply the refinement construction used in the proof of Theorem~\ref{th-existence-uniqueness-cohomological}, and then group all sub-edges with the same nonzero cocycle label into synchronized sources.
% The resulting source-synchronized circuit has the same current as the original cohomological circuit.
% \end{example}

% \begin{remark}
% It is easy to see that the source-synchronized circuit constructed in
% Example~\ref{ex:reduc} has the same current as the original cohomological circuit.
% \end{remark}

\begin{remark}\label{rem-arbitrary-phi}
    Without the assumption of the injectivity of the attaching maps $\phi_i$, the definition of the two-dimensional (co)boundary operator should be modified as follows.
    This time, a $1$-cell $e$ can contribute to the boundary of a $2$-cell $f=\mathrm{Int}\,D^2_i$
    with a nontrivial multiplicity (also known as the incidence number).
    Namely, let $D^2_i$ be attached to the $1$-dimensional cell subcomplex $S_1\subset S$ by a map $\phi_i\colon\partial D^2_i\to S_1$. In the image $\phi_i(\partial D^2_i)$, contract all the edges except for~$e$. The degree of the resulting map %$\partial D^2_i\to e/\partial e$
    between two topological circles $\partial D^2_i$ and $e/\partial e$ is the desired \emph{incidence number} $[f:e]$. It is well-defined (if $e\subset \partial f$), because both $f$ and $e$ are equipped with a fixed orientation. The operators $\partial \colon C_2(S;\mathbb{R}) \to C_{1}(S;\mathbb{R})$, $I\mapsto \partial I$, and
    $\delta \colon C_1(S;\mathbb{R}) \to C_{2}(S;\mathbb{R})$,
    $\mathcal{E}\mapsto\delta \mathcal{E}$, are then defined by
\[
[\partial I](e)=\sum_{f:\partial f\supset e} [f:e]I(f)
\qquad\text{and}\qquad
[\delta \mathcal{E}](f)=\sum_{e\subset \partial f} [f:e]\mathcal{E}(e),
\]
where $e$ and $f$ denote $1$- and $2$-cells, respectively. The remaining definitions remain identical.
\end{remark}

\section{Currents and response}
\label{sec-response}

In this section, we prove combinatorial formulae for the currents and response.

\subsection{Source-synchronized networks}
\label{ssec-current-source-synchronized-networks}

We start with a combinatorial formula for the current in a source-synchronized circuit.
%Let us introduce notation
For %an oriented path
a simple cycle or simple path $\gamma$ %in the circuit
and an %(oriented)
edge $e$, denote
\begin{equation}\label{eq-bracket-cycle-edge}
\langle \gamma,e\rangle:=\begin{cases}
    +1, &\text{if $e$ is oriented along }\gamma,\\
    -1, &\text{if $e$ is oriented opposite to }\gamma,\\
    0,  &\text{if $e$ is not contained in }\gamma.
\end{cases}
\end{equation}

For a simple cycle or simple path $\gamma$ and a synchronized source $\omega^i$, define
\[
\langle \gamma,\omega^i\rangle := \sum_{e \in \omega^i} \langle \gamma,e\rangle.
\]

\begin{figure}[h!]
\begin{tikzpicture}[scale=0.7, thick]

% =========================================================
%                    ВЕРХНЯЯ ЧАСТЬ
% =========================================================

% ---------- граф S ----------
\begin{scope}

\coordinate (A) at (-2,0);
\coordinate (B) at ( 2,0);
\coordinate (C) at ( 0,-3.46);

% рёбра
\draw (A) -- (C);
\draw[latex-] (A) to node[above=0pt] {$e'$} (B);
\draw (B) -- (C);

% вершины
\fill (A) circle (2pt);
\fill (B) circle (2pt);
\fill (C) circle (2pt);

% подписи верхних вершин
\node[left=3pt, yshift=-2pt]  at (A) {$v_j$};
\node[right=3pt, yshift=-2pt] at (B) {$v_i$};

% название графа
\node at (0,-1.45) {$S$};

\end{scope}

% ---------- стрелка S -> \hat S ----------
\draw[->, very thick]
    (3,-1.73) -- (4,-1.73);

% ---------- граф \hat S ----------
\begin{scope}[xshift=7cm]

\coordinate (A) at (-2,0);
\coordinate (M) at ( 0,0);
\coordinate (B) at ( 2,0);
\coordinate (C) at ( 0,-3.46);

% рёбра
\draw (A) -- (C);
\draw[latex-] (M) to node[above=0pt] {$e'$} (B);
\draw (B) -- (C);

% вершины
\fill (A) circle (2pt);
\fill (M) circle (2pt);
\fill (B) circle (2pt);
\fill (C) circle (2pt);

\node[left=-2pt, yshift=-10pt] at (A) {$U_j=1$};
\node[yshift=-10pt]             at (M) {$U_{i'}=0$};
\node[right=3pt, yshift=-2pt] at (B) {$v_i$};

% название графа
\node at (0,-1.45) {$\hat S$};

\end{scope}

% =========================================================
%                    НИЖНЯЯ ЧАСТЬ
% =========================================================

\begin{scope}[yshift=-6.3cm, scale=0.5]

% =========================================================
%                    НИЖНИЙ РЯД
% =========================================================

% ---------- первый маленький граф ----------
\begin{scope}[xshift=-3.5cm]

\coordinate (A) at (-2,0);
\coordinate (B) at ( 2,0);
\coordinate (C) at ( 0,-3.46);

\draw[blue] (A) -- (C);
\draw       (A) -- (B);
\draw[blue] (B) -- (C);

\fill[blue] (A) circle (3pt);
\fill[blue] (B) circle (3pt);
\fill[blue] (C) circle (3pt);

% стрелка
\draw[<->, very thick]
    (3,-1.73) -- (4,-1.73);

\end{scope}

% ---------- второй маленький граф ----------
\begin{scope}[xshift=3.5cm]

\coordinate (A) at (-2,0);
\coordinate (M) at ( 0,0);
\coordinate (B) at ( 2,0);
\coordinate (C) at ( 0,-3.46);

\draw[blue] (A) -- (C);
\draw       (M) -- (B);
\draw[blue] (B) -- (C);

\fill[blue] (A) circle (3pt);
\fill[blue] (M) circle (3pt);
\fill[blue] (B) circle (3pt);
\fill[blue] (C) circle (3pt);

\end{scope}

% =========================================================
%                    ВЕРХНИЙ РЯД
% =========================================================

% ---------- T ----------
\begin{scope}[yshift=5cm,xshift=-3.5cm]

\coordinate (A) at (-2,0);
\coordinate (B) at ( 2,0);
\coordinate (C) at ( 0,-3.46);

\draw[blue] (A) -- (C);
\draw[blue] (A) -- (B);
\draw       (B) -- (C);

\fill[blue] (A) circle (3pt);
\fill[blue] (B) circle (3pt);
\fill[blue] (C) circle (3pt);

% подпись внутри
\node[blue] at (0,-1.25) {$T$};

% стрелка
\draw[<->, very thick]
    (3,-1.73) -- (4,-1.73);

\end{scope}

% ---------- B ----------
\begin{scope}[yshift=5cm,xshift=3.5cm]

\coordinate (A) at (-2,0);
\coordinate (M) at ( 0,0);
\coordinate (B) at ( 2,0);
\coordinate (C) at ( 0,-3.46);

\draw[blue] (A) -- (C);
\draw[blue] (M) -- (B);
\draw       (B) -- (C);

\fill[blue] (A) circle (3pt);
\fill[blue] (M) circle (3pt);
\fill[blue] (B) circle (3pt);
\fill[blue] (C) circle (3pt);

% подпись внутри
\node[blue] at (0,-1.25) {$B$};

\end{scope}

% ---------- U ----------
\begin{scope}[yshift=5cm,xshift=10.5cm]

\coordinate (A) at (-2,0);
\coordinate (B) at ( 2,0);
\coordinate (C) at ( 0,-3.46);

\draw[blue] (A) -- (C);
\draw[blue] (A) -- (B);
\draw[blue] (B) -- (C);

\fill[blue] (A) circle (3pt);
\fill[blue] (B) circle (3pt);
\fill[blue] (C) circle (3pt);

% подпись внутри
\node[blue] at (0,-1.25) {$U$};

% стрелка
\draw[<->, very thick]
    (3,-1.73) -- (4,-1.73);

\end{scope}

% ---------- \hat T ----------
\begin{scope}[yshift=5cm,xshift=17.5cm]

\coordinate (A) at (-2,0);
\coordinate (M) at ( 0,0);
\coordinate (B) at ( 2,0);
\coordinate (C) at ( 0,-3.46);

\draw[blue] (A) -- (C);
\draw[blue] (M) -- (B);
\draw[blue] (B) -- (C);

\fill[blue] (A) circle (3pt);
\fill[blue] (M) circle (3pt);
\fill[blue] (B) circle (3pt);
\fill[blue] (C) circle (3pt);

% подпись внутри
\node[blue] at (0,-1.25) {$\hat T$};

\end{scope}

\end{scope}

\end{tikzpicture}
\caption{Top: A source-synchronized circuit $S$ (left) gives rise to an ordinary electric circuit $\hat S$ (%with the Dirichlet boundary conditions;
right) by introducing a new vertex $v_{i'}$ and rerouting the edge $e'$ from $v_j$ to $v_{i'}$. Bottom left: The spanning trees $T$ of $S$ (blue) are in bijection with spanning bitrees $B$ of $\hat S$ (blue). Bottom right: %bijection between spanning connected subgraphs
The unicycles $U$ of $S$ (blue) %with a unique cycle and
are in bijection spanning trees $\hat T$ of $\hat S$ (blue). See the proof of Proposition~\ref{prop-current}.}
\label{fig-reroute}
\end{figure}

\begin{proposition}\label{prop-current}
The current through an edge $e$ of a source-synchronized circuit equals
\begin{equation}\label{eq-prop-current}
    I(e)=\frac{\sum_{U,e'} c(U)\langle \gamma(U),e\rangle\langle \gamma(U),e'\rangle\mathcal{E}(e')}{\sum_T c(T)},
\end{equation}
where the sum in the numerator is over all spanning connected subgraphs $U$ having a unique (up to orientation reversal) simple cycle $\gamma(U)$ and over all edges~$e'$, and the sum in the denominator is over %all
spanning trees~$T$.
\end{proposition}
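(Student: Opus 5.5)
By linearity in $\mathcal{E}$, it suffices to prove the formula when exactly one edge $e'$ carries a nonzero source voltage, say $\mathcal{E}(e')=1$ and $\mathcal{E}=0$ elsewhere. Indeed, both sides of \eqref{eq-prop-current} are linear in the function $\mathcal{E}$ on edges, and by Theorem~\ref{th-existence-uniqueness-synchronized} the current depends linearly on $\mathcal{E}$. Here we may ignore synchronization altogether: the laws (I) and (V) only involve the function $\mathcal{E}(e)$. Loops $e'$ are handled directly. If $e'$ is a loop, then $I(e')=c(e')$ and all other currents vanish. On the other side, the unicycles containing the loop $e'$ as their cycle are exactly $T\cup e'$, so the numerator is $c(e')\sum_T c(T)\langle e',e\rangle$, which matches. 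So assume $e'$ goes from $v_i$ to $v_j$ with $v_i\neq v_j$.

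Following Fig.~\ref{fig-reroute}, I would pass to an ordinary Dirichlet network $\hat S$. Detach the head of $e'$ from $v_j$ and attach it to a new vertex $v_{i'}$. Impose the boundary voltages $U_j=1$ and $U_{i'}=0$, with no sources. A current in $\hat S$ satisfying (I) at all vertices except $v_j,v_{i'}$, with voltage potentials, glues to a current in $S$ with a unit source on $e'$. The voltage drop along $e'$ in $S$ is then the drop in $\hat S$ minus $\mathcal{E}(e')$, which corresponds to identifying $v_{i'}$ with $v_j$. Kirchhoff's law (I) at the identified vertex holds because the total current leaving $\{v_j,v_{i'}\}$ in $\hat S$ is zero. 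By uniqueness, the two currents agree.

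Next, I would apply the classical Kirchhoff formula for a two-terminal Dirichlet network (a standard consequence of the matrix-tree theorem). The current through an edge $e$ with unit voltage between the terminals is
\[
\frac{\sum_{\hat T} c(\hat T)\,\langle \text{path}_{\hat T}(v_{i'}\to v_j),e\rangle}{\sum_{B} c(B)},
\]
up to a sign convention fixed by orientations. Here $\hat T$ runs over spanning trees of $\hat S$, and $B$ runs over spanning bitrees (two-component spanning forests) separating $v_j$ from $v_{i'}$.

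Then I would set up the bijections in the figure. A spanning tree $T$ of $S$ corresponds to the bitree $B$ of $\hat S$ obtained by rerouting $e'$ if $e'\in T$ and leaving $T$ unchanged otherwise. In either case $v_j$ and $v_{i'}$ end up in different components, and the map is bijective with the same weights. A spanning tree $\hat T$ of $\hat S$ corresponds to a unicycle $U$ of $S$ whose cycle $\gamma(U)$ passes through $e'$, by regluing $v_{i'}$ to $v_j$. The path in $\hat T$ from $v_{i'}$ to $v_j$ becomes $\gamma(U)$ oriented along $e'$, so that $\langle\gamma(U),e'\rangle=1$.

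Under these bijections, the path coefficient becomes $\langle\gamma(U),e\rangle\langle\gamma(U),e'\rangle$. The product is independent of the orientation chosen for $\gamma(U)$. Unicycles whose cycle avoids $e'$ have $\langle\gamma(U),e'\rangle=0$, so they contribute nothing. This yields \eqref{eq-prop-current}.

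The main obstacle is getting the signs and orientation conventions right when matching the Dirichlet path formula to the reglued cycles, including the case where $e$ is $e'$ itself. The cleanest alternative I would keep in reserve is to verify directly that the right-hand side satisfies (I) and (V). Law (I) follows because $\sum_U c(U)\langle\gamma(U),e'\rangle\gamma(U)$ is a sum of cycles. Law (V) would follow via a switching argument, and I expect that to be messier.
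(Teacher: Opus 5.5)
Your proposal is essentially the paper's proof: the linearity reduction to a single unit source on $e'$, the rerouted Dirichlet network $\hat S$ with $U_j=1$, $U_{i'}=0$, the two-terminal spanning-tree formula, and the two bijections (spanning trees of $S$ with bitrees of $\hat S$ separating $v_j$ from $v_{i'}$, and spanning trees of $\hat S$ with unicycles whose cycle passes through $e'$) are exactly what the paper uses, and your separate treatment of loops is a harmless extra. There is one orientation slip: the path in $\hat T$ from $v_{i'}$ to $v_j$ begins by traversing $e'$ backwards, so it reglues to $\gamma(U)$ oriented \emph{against} $e'$. Use instead the path from $v_j$ to $v_{i'}$; this is the direction of current flow since $U_j=1>U_{i'}=0$, and it ends with $e'$ traversed along its direction, so all signs match without your ``up to a sign convention'' hedge.
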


Here, the numerator is well-defined, that is, it does not depend on the orientation of ~$\gamma(U)$.

\begin{proof}
By linearity, it suffices to consider the case where $\mathcal E(e')=1$ for some fixed edge $e'$ and $\mathcal E({e})=0$ for each %edge
${e}\neq e'$. In this case, the source-synchronized circuit $S$ gives rise to an ordinary
electric circuit $\hat S$ as follows; see Fig.~\ref{fig-reroute}. Let $v_i$ and $v_j$ be the starting %point
and the endpoint of $e'$. %, respectively.
%, where $e$ is an edge from $v_j$ and $v_i$.
Introduce a new vertex $v_{i'}$ and reroute the endpoint of $e'$ from $v_j$ to $v_{i'}$. %The resulting circuit is
We get an %ordinary
electric
circuit $\hat S$  with two boundary vertices $v_{i'}$ and $v_j$ and prescribed voltages $U_{i'}=0$ and $U_j=1$.

It is well-known~\cite[Proposition~2.7]{PSS} that %in this setting
the current through an oriented edge $e$ in $\hat S$ equals %the resulting circuit is given by
\begin{equation}\label{eq-current-tree}
    I(e)=\frac{\sum_{T^+} c(T^+)-\sum_{T^-} c(T^-)}{\sum_{B} c(B)},
\end{equation}
where the sum in the denominator is over all spanning bitrees (spanning forests with exactly two components) $B$ such that $v_{i'}$ and $v_j$ are in different components, and the sums in the numerator are over all spanning trees $T^+$ and $T^-$ such that the oriented path from $v_j$ to $v_{i'}$ in the tree passes the edge $e$ in the direction of $e$ and opposite to the direction of $e$, respectively.

There is a natural bijection between spanning trees $T$ of $S$ and spanning bitrees $B$ of $\hat S$ separating $v_{i'}$ and $v_j$. Indeed, if $T$ is a spanning tree of $S$, then rerouting the edge $e'$ produces a spanning bitree $B$ of $\hat S$ with two components, one containing $v_{i'}$ and the other containing $v_j$. If $T$ does not contain $e'$, then one component consists of the single vertex~$v_{i'}$. Conversely, given such a bitree $B$, rerouting the endpoint of $e'$ from $v_{i'}$ back to $v_j$ reconstructs a spanning tree $T$ of $S$. Therefore,
$
\sum_B c(B)=\sum_T c(T).
$

Similarly, each spanning tree $\hat T$ of $\hat S$ corresponds, by rerouting the endpoint of $e'$ from $v_{i'}$ back to $v_j$, to a spanning connected subgraph $U$ of $S$ with a unique cycle $\gamma(U)$. Moreover, %the sign of
the contribution of $\hat T$ in the formula above is exactly
$
c(U)\langle\gamma(U),e\rangle\langle\gamma(U),e'\rangle.
$

Summing over all $\hat T$ gives the numerator, and the denominator is $\sum_B c(B)=\sum_T c(T)$.
\end{proof}

The spanning connected subgraphs $U$ having a unique simple cycle $\gamma$ (up to orientation reversal) are referred to as \emph{unicycles} or \emph{cycle-rooted spanning trees}. This is a particular case of Kenyon's \emph{cycle-rooted spanning forests}, or $CRSF$-s \cite{Kenyon-11} with one component.

%%%%%%%%%%%%%%%%%
%\mscomm{Adopt the following examples to the new notation.}
%
%   \begin{figure}[ht]
%%    \begin{center}
%\scalebox{0.8}{\input{kir1.pstex_t}}
%%    \end{center}
%    \caption{}
%    \label{fig:kir1}
%\end{figure}
%
%\begin{example}
%In the example in Figure \ref{fig:kir1} surface $S$ is a cylinder, network $N$ has two boundary vertices and two internal vertices.
%We have three zippers carrying voltages $w_1$, $w_2$ and $w_3$, respectively.
%Solving the linear system of flow conditions and generalized Kirchoff law, we find
%$$i_a = \frac{C_a C_b C_c w_1+C_a C_b C_d w_1-C_a C_b C_d w_2+C_a C_b C_d w_3+C_a C_c C_d w_1}{C_aC_b+C_aC_d+C_bC_c+C_bC_d+C_cC_d},$$
%$$i_b = \frac{-C_aC_bC_dw_1+C_aC_bC_dw_2-C_aC_bC_dw_3+C_bC_cC_dw_2-C_bC_cC_dw_3}{C_aC_b+C_aC_d+C_bC_c+C_bC_d+C_cC_d},$$
%$$i_c = \frac{C_aC_bC_cw_1+C_aC_cC_dw_1+C_bC_cC_dw_2-C_bC_cC_dw_3}{C_aC_b+C_aC_d+C_bC_c+C_bC_d+C_cC_d},$$
%$$i_d = \frac{C_aC_bC_dw_1-C_aC_bC_dw_2+C_aC_bC_dw_3-C_bC_cC_dw_2+C_bC_cC_dw_3}{C_aC_b+C_aC_d+C_bC_c+C_bC_d+C_cC_d}.$$
%\end{example}
%
%  \begin{figure}[ht]
%%    \begin{center}
%\scalebox{0.8}{\input{kir2.pstex_t}}
%%    \end{center}
%    \caption{}
%    \label{fig:kir2}
%\end{figure}
%
%\begin{example} \label{ex:2}
% Example.
%\end{example}
%%%%%%%%%%%%%%%%%%%%%%%%%%

\begin{proposition}\label{prop-response} The response matrix of a source-synchronized network has the entries
$$L_{i}^{j}=\frac{\sum_{U} c(U)\langle \gamma(U),\omega^i\rangle\langle \gamma(U),\omega^j\rangle}{\sum_T c(T)},$$
where the sum in the numerator is over all spanning connected subgraphs $U$ having a unique (up to orientation reversal) simple cycle $\gamma(U)$, and the sum in the denominator is over %all
spanning trees~$T$. In particular, $L$ is symmetric and non-negatively definite.
%its diagonal entries are non-negative.
\end{proposition}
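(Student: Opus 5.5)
The plan is to derive the entries directly from Proposition~\ref{prop-current} by linearity. By definition, the $j$-th column of $L$ is the vector of currents $(I_1,\dots,I_b)$ in the circuit with $\mathcal{E}_j=1$ and $\mathcal{E}_k=0$ for $k\ne j$. In that circuit the source voltage is $\mathcal{E}(e')=1$ for $e'\in\omega^j$ and $\mathcal{E}(e')=0$ otherwise. Substituting this into \eqref{eq-prop-current}, the inner sum over $e'$ collapses to $\sum_{e'\in\omega^j}\langle\gamma(U),e'\rangle=\langle\gamma(U),\omega^j\rangle$. This gives
$$I(e)=\frac{\sum_U c(U)\langle\gamma(U),e\rangle\langle\gamma(U),\omega^j\rangle}{\sum_T c(T)}.$$
Summing over $e\in\omega^i$ turns $\langle\gamma(U),e\rangle$ into $\langle\gamma(U),\omega^i\rangle$ in the same way. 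The result is exactly the claimed formula for $L_i^j=I_{\omega^i}$.

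The formula is manifestly symmetric in $i$ and $j$, since it involves the product $\langle\gamma(U),\omega^i\rangle\langle\gamma(U),\omega^j\rangle$. Each term is also independent of the orientation chosen for $\gamma(U)$: reversing the orientation flips both factors, so the product is unchanged. This is the same observation that makes \eqref{eq-prop-current} well-defined.

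For non-negative definiteness, I will write $L=\frac{1}{\sum_T c(T)}\sum_U c(U)\,g_U g_U^{\mathrm T}$, where $g_U\in\mathbb{R}^b$ is the column vector with entries $(g_U)_i=\langle\gamma(U),\omega^i\rangle$. Each $g_Ug_U^{\mathrm T}$ is a rank-one positive semidefinite matrix. The weights $c(U)>0$ and the denominator $\sum_T c(T)>0$, the latter because the graph is connected and so has a spanning tree. A positive combination of positive semidefinite matrices is positive semidefinite, so for any $x\in\mathbb{R}^b$ we get $x^{\mathrm T}Lx=\frac{\sum_U c(U)(g_U\cdot x)^2}{\sum_T c(T)}\ge 0$.

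I do not expect a genuine obstacle here. The only point needing care is bookkeeping: the synchronized sources $\omega^j$ are disjoint, so the source voltage in the $j$-th basis circuit is exactly the indicator of $\omega^j$. Also, $\langle\gamma,\omega^i\rangle$ is defined as a sum over $e\in\omega^i$ of $\langle\gamma,e\rangle$, which is precisely what the summations produce.
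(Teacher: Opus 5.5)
Your proposal is correct and is the paper's own argument: set the source voltage to the indicator of $\omega^j$ in Proposition~\ref{prop-current} and sum the edge currents over $e\in\omega^i$. Your decomposition $L=\sum_U c(U)\,g_Ug_U^{\mathrm T}/\sum_T c(T)$ simply spells out the symmetry and non-negative definiteness that the paper says follow directly from the formula.
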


\begin{proof}
By definition, $L_{i}^{j}$ is the current through $\omega^i$ when the synchronized source voltage at $\omega^j$ is $1$,
%a unit voltage is applied to $\omega^j$,
and all other source voltages vanish. %are zero.
Summing the edge currents given by Proposition~\ref{prop-current} over $e\in \omega^i$, we get the %desired
formula for $L_{i}^{j}$. Symmetry and non-negative definiteness %non-negativity of the diagonal entries
follow. %immediately.
\end{proof}

\subsection{Cohomological networks}

Let us state analogous results for cohomological circuits.

Identify a simple %(oriented)
cycle $\gamma$ in a cohomological circuit $S$ with the corresponding $1$-cycle %chain
$$
\gamma = \sum_e \langle \gamma,e\rangle\, e,
$$
where $\langle \gamma,e\rangle \in \{0,\pm1\}$ records if the edge $e$
appears in $\gamma$ and with what orientation; see~\eqref{eq-bracket-cycle-edge}.

Introduce the natural pairing between chains and cochains:
\[
\langle \cdot , \cdot \rangle : C^1(S;\mathbb{R}) \times C_1(S;\mathbb{R}) \to \mathbb{R}, \qquad \langle \mathcal{E}, I \rangle=\sum_e \mathcal{E}(e)I(e),
\]
where the sum is over all $1$-cells $e$. It descends to a %natural
pairing between homology and cohomology
\[
\langle \cdot , \cdot \rangle : H^1(S;\mathbb{R}) \times H_1(S;\mathbb{R}) \to \mathbb{R}.
\]

\begin{figure}[h]
 \center
  \begin{tabular}{c}
\begin{tikzpicture}[scale=0.4]
\coordinate (A) at (0,0);
\coordinate (B) at (-3,2);
\coordinate (C) at (-3,-2);
\coordinate (D) at (-6,0);

\coordinate (G) at ($ (A)!1/3!(B) !1/3!(C) $);
\coordinate (G1) at ($ (B)!1/3!(C) !1/3!(D) $);

\def\lambda{0.75}

\draw[color=blue]
(A) to[short, *-*, i_={\tiny $1$}] (B)
(C) to[short, *-*, i_={\tiny $2$}] (B)
(C) to[short, *-*, i_={\tiny $1$}] (A)
(B) to[short, *-*, i_={\tiny $3$}] (D)
(D) to[short, *-*, i_={\tiny $3$}] (C);

\coordinate (a) at ($(G)!\lambda!(A)$);
\coordinate (b) at ($(G)!\lambda!(B)$);
\coordinate (c) at ($(G)!\lambda!(C)$);

\coordinate (b1) at ($(G1)!\lambda!(B)$);
\coordinate (d1) at ($(G1)!\lambda!(D)$);
\coordinate (c1) at ($(G1)!\lambda!(C)$);

\draw[-{Latex}, thick, draw=orange] (a) -- (b);
\draw[-{Latex}, thick, draw=orange] (b) -- (c);
\draw[-{Latex}, thick, draw=orange] (c) -- (a);

\draw[-{Latex}, thick, draw=orange] (b1) -- (d1);
\draw[-{Latex}, thick, draw=orange] (d1) -- (c1);
\draw[-{Latex}, thick, draw=orange] (c1) -- (b1);
\node[text=orange] at ($(G)!0.2!(A)$) {\tiny $0$};
\node[text=orange] at ($(G1)!0.2!(D)$) {\tiny $8$};
\node at (2,0) {$=\frac{1}{8}$};

\begin{scope}[xshift=10cm]
\coordinate (A) at (0,0);
\coordinate (B) at (-3,2);
\coordinate (C) at (-3,-2);
\coordinate (D) at (-6,0);

\coordinate (G) at ($ (A)!1/3!(B) !1/3!(C) $);
\coordinate (G1) at ($ (B)!1/3!(C) !1/3!(D) $);

\def\lambda{0.75}

\draw[color=blue]
(A) to[short, *-*, i_={\tiny $0$}] (B)
(C) to[short, *-*, i_={\tiny $8$}] (B)
(B) to[short, *-*, i_={\tiny $8$}] (D)
(D) to[short, *-*, i_={\tiny $8$}] (C);

\coordinate (a) at ($(G)!\lambda!(A)$);
\coordinate (b) at ($(G)!\lambda!(B)$);
\coordinate (c) at ($(G)!\lambda!(C)$);

\coordinate (b1) at ($(G1)!\lambda!(B)$);
\coordinate (d1) at ($(G1)!\lambda!(D)$);
\coordinate (c1) at ($(G1)!\lambda!(C)$);

\end{scope}

\node at (12,0) {+$\frac{1}{8}$};
\begin{scope}[xshift=20cm]
\coordinate (A) at (0,0);
\coordinate (B) at (-3,2);
\coordinate (C) at (-3,-2);
\coordinate (D) at (-6,0);

\coordinate (G) at ($ (A)!1/3!(B) !1/3!(C) $);
\coordinate (G1) at ($ (B)!1/3!(C) !1/3!(D) $);

\def\lambda{0.75}

\draw[color=blue]
(C) to[short, *-*, i_={\tiny $8$}] (B)
(C) to[short, %dashed,
*-*, i_={\tiny $0$}] (A)
(B) to[short, *-*, i_={\tiny $8$}] (D)
(D) to[short, *-*, i_={\tiny $8$}] (C);

\coordinate (a) at ($(G)!\lambda!(A)$);
\coordinate (b) at ($(G)!\lambda!(B)$);
\coordinate (c) at ($(G)!\lambda!(C)$);

\coordinate (b1) at ($(G1)!\lambda!(B)$);
\coordinate (d1) at ($(G1)!\lambda!(D)$);
\coordinate (c1) at ($(G1)!\lambda!(C)$);

\end{scope}

\node at (22,0) {+$\frac{1}{8}$};

\begin{scope}[xshift=30cm]
\coordinate (A) at (0,0);
\coordinate (B) at (-3,2);
\coordinate (C) at (-3,-2);
\coordinate (D) at (-6,0);

\coordinate (G) at ($ (A)!1/3!(B) !1/3!(C) $);
\coordinate (G1) at ($ (B)!1/3!(C) !1/3!(D) $);

\def\lambda{0.75}

\draw[color=blue]
(A) to[short, *-*, i_={\tiny $8$}] (B)
(C) to[short, *-*, i_={\tiny $8$}] (A)
(B) to[short, *-*, i_={\tiny $8$}] (D)
(D) to[short, *-*, i_={\tiny $8$}] (C);

\coordinate (a) at ($(G)!\lambda!(A)$);
\coordinate (b) at ($(G)!\lambda!(B)$);
\coordinate (c) at ($(G)!\lambda!(C)$);

\coordinate (b1) at ($(G1)!\lambda!(B)$);
\coordinate (d1) at ($(G1)!\lambda!(D)$);
\coordinate (c1) at ($(G1)!\lambda!(C)$);

\end{scope}

\end{tikzpicture}
\end{tabular}
    \caption{Left: a cohomological circuit $S$ consisting of $5$ edges of conductance~$1$. Two basis cycles are depicted in orange, and the source voltage $\mathcal{E}\in H^1(S;\mathbb{R})$ is uniquely determined by its values on the cycles, also shown in orange. The current is a formal linear combination of edges with the coefficients shown in blue. Right: The current equals the sum over all possible unicycles $U$ with the coefficients $\langle\mathcal{E},\gamma(U)\rangle$, divided by $\sum_T c(T) = 8$. Only the unicycles $U$ with non-zero coefficients $\langle\mathcal{E},\gamma(U)\rangle$ are shown. See Example~\ref{ex-theta} and Corollary~\ref{cor-cohomological-current}. }
    \label{fig:current-cohomological-network}
\end{figure}

\begin{corollary} \label{cor-cohomological-current} (See Fig.~\ref{fig:current-cohomological-network}) %Under the notation of Proposition~\ref{prop-current},
    For any cohomological circuit, we have
    %$$I=\frac{\sum_{U,e}c(U)\langle\mathcal{E},\gamma(U)\rangle \langle \gamma(U),e\rangle e}{\sum_T c(T)},$$
$$I=\frac{\sum_{U} c(U)\,\langle \mathcal E,\gamma(U)\rangle\,\gamma(U)}{\sum_T c(T)},$$
   %where we use the notation from the previous proposition.
    %and $\imath$ is the inclusion of $U$ into the circuit.
    where the sum in the numerator is over all spanning connected subgraphs $U$ having a unique (up to orientation reversal) simple cycle $\gamma(U)$, %and all edges $e$,
    and the sum in the denominator is over all
spanning trees~$T$.
\end{corollary}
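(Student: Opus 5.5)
The plan is to deduce the corollary from Proposition~\ref{prop-current}. The reduction is that a cohomological circuit is just an ordinary Kirchhoff circuit on its $1$-skeleton, with source voltages given by any cocycle representative of $\mathcal E$. Only the $0$- and $1$-cells enter the formula, since unicycles and spanning trees live in the $1$-skeleton.

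Fix a cocycle $\tilde{\mathcal E}\in Z^1(S;\mathbb{R})$ representing the class $\mathcal E$. Consider the electrical circuit on the $1$-skeleton of $S$ with edge source voltages $\tilde{\mathcal E}(e)$. This is a source-synchronized circuit in which each edge carrying a nonzero voltage forms its own synchronized source.

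First I check that the cohomological current $I$ coincides with the Kirchhoff current of this circuit. Law (I) is identical in both settings. For law (V), the cohomological condition says that $V(I)-\tilde{\mathcal E}$ is a coboundary $\delta U$. Evaluating on any simple cycle $\gamma$ kills $\delta U$, so it gives exactly the Kirchhoff law (V):
$$\sum_e\langle\gamma,e\rangle I(e)/c(e)=\sum_e\langle\gamma,e\rangle\tilde{\mathcal E}(e).$$
Conversely, the Kirchhoff law (V) says that the $1$-cochain $V(I)-\tilde{\mathcal E}$ vanishes on all simple cycles of the $1$-skeleton. Since simple cycles span $Z_1$, this cochain is a coboundary; equivalently, we may integrate along a spanning tree to define a potential. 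Hence $V(I)$ is a cocycle in the class $\mathcal E$. By the uniqueness in Theorems~\ref{th-existence-uniqueness-synchronized} and~\ref{th-existence-uniqueness-cohomological}, the two currents agree.

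Then I apply Proposition~\ref{prop-current} edge by edge:
$$I(e)=\frac{\sum_U c(U)\langle\gamma(U),e\rangle\sum_{e'}\langle\gamma(U),e'\rangle\tilde{\mathcal E}(e')}{\sum_T c(T)}.$$
The inner sum is the pairing $\langle\tilde{\mathcal E},\gamma(U)\rangle$ of the cochain with the $1$-cycle $\gamma(U)$. Since $\gamma(U)$ is a cycle, this pairing depends only on the cohomology class, so it equals $\langle\mathcal E,\gamma(U)\rangle$. Multiplying by $e$ and summing over $e$ turns $\sum_e\langle\gamma(U),e\rangle e$ into $\gamma(U)$, which gives the stated formula.

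Each summand is invariant under reversing the orientation of $\gamma(U)$, since it involves two factors of $\gamma(U)$, so the expression is well defined. The only mildly delicate step is the equivalence of the two versions of law (V). It rests on the standard fact that a $1$-cochain on a connected graph vanishing on all cycles is $\delta$ of a $0$-cochain, applied on the $1$-skeleton, where higher cells impose no extra constraints on $I$.
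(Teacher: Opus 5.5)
Your proof is correct, and it takes a more direct route than the paper's.

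The paper proceeds in two steps. First it shows that both sides of the identity are invariant under refinement: it subdivides an edge $e$ in series with $1/c(e)=\sum_i 1/c(e_i)$ and checks that the weights of spanning trees and unicycles all scale by the same factor $\prod_i c(e_i)/c(e)$. Then it invokes Example~\ref{ex:reduc}, which turns a suitably refined circuit into a source-synchronized circuit whose synchronized sources are exactly the supports of the integer basis cocycles $\omega^j$. Only after that does it apply Proposition~\ref{prop-current}.

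You skip the refinement entirely. You observe that Proposition~\ref{prop-current} already handles an arbitrary real edge-voltage function, via singleton synchronized sources, i.e.\ Kirchhoff's original circuits. So any real cocycle representative $\tilde{\mathcal E}$ can be placed directly on the $1$-skeleton. The only thing left to verify is that the two versions of law~(V) are equivalent, and you check this correctly; the paper states this equivalence only in passing after Theorem~\ref{th-existence-uniqueness-cohomological}.

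For this corollary alone, your argument is shorter and does not depend on the choice of basis. The paper's detour has a purpose, though. The refinement-invariance computation is reused in the proof of Corollary~\ref{cor-all-minor-matrix-tree}. There singleton sources would not suffice: the synchronized sources must be the supports of the $\omega^j$, so that the source-synchronized $b\times b$ response matrix, and hence its minors and multiplicities, matches the cohomological one.
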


\begin{proof}
We first show that the identity is invariant under circuit refinement (see the definition after Example~\ref{ex-surface-2-cohomological}; cf.~Examples~\ref{ex-sufaces-via-synchronized} and~\ref{ex:reduc}), and then prove it for a convenient refinement.

Subdivide an edge $e$ into $e_1,\dots,e_\ell$ with
$$
\frac{1}{c(e)}=\sum_{i=1}^{\ell}\frac{1}{c(e_i)}.
$$
A spanning tree or unicycle containing $e$ refines to one containing all of
$e_1,\dots,e_\ell$, multiplying its weight by $\prod_i c(e_i)/c(e)$. A spanning tree or unicycle not containing $e$ gives rise to $\ell$ ones obtained by omitting exactly one of the edges $e_i$, and the total weight of these $\ell$ subgraphs is
$$
\prod_i c(e_i)\sum_i\frac{1}{c(e_i)}=\frac{\prod_i c(e_i)}{c(e)}.
$$
Thus, after refinement, every contribution to the numerator and to the denominator is multiplied by
the same factor $\prod_i c(e_i)/c(e)$, while $\langle \mathcal E,\gamma(U)\rangle$ is
unchanged, where we identify each cycle in the original graph with a refined cycle in the refined graph. Hence, the quotient is unchanged. Clearly, the current $I$ is also unchanged (cf.~Example~\ref{ex-sufaces-via-synchronized}).

By Example \ref{ex:reduc}, %after finitely many refinements,
a cohomological circuit can be realized as a source-synchronized circuit after suitable refinement; % $\widetilde S$; %by the preceding paragraph
it suffices to prove the identity for the latter circuit.
Multiplying the identity from Proposition \ref{prop-current} by the $1$-chain $e$ and summing over all edges $e$, %and using $\sum_{e_k}\langle\gamma(U),e_k\rangle\,e_k=\gamma(U)$,
we get
$$
I=\frac{\sum_{U} c(U)\Big(\sum_{e'}\langle\gamma(U),e'\rangle \mathcal E(e')\Big)
\Big(\sum_{e}\langle\gamma(U),e\rangle\,e\Big)
%\gamma(U)
}
{\sum_T c(T)},
$$
where $\mathcal E(e')$ is the source voltage in the source-synchronized circuit, viewed as a 1-cocycle
% value on $e'$ of the cocycle %$\mathcal E_1\omega^1+\dots +\mathcal E_b\omega^b$
%constructed in~Example \ref{ex:reduc},
representing the cohomology class $\mathcal E$. Since
% $\sum_e\langle\gamma(U),e\rangle \mathcal E(e)=\langle \mathcal E,\gamma(U)\rangle$ and $\sum_{e_k}\langle\gamma(U),e_k\rangle\,e_k=\gamma(U)$,
$\sum_{e'}\langle\gamma(U),e'\rangle \mathcal E(e')=\langle \mathcal E,\gamma(U)\rangle$ and $\sum_{e}\langle\gamma(U),e\rangle\, e=\gamma(U)$,
the desired formula follows.
\end{proof}

\begin{example}  \label{ex-theta}
Consider the example in Fig. \ref{fig:current-cohomological-network}. Here, all edges have conductance $1$, and the source voltage $\mathcal{E}$ is the linear function that assigns value $8$ to the left orange cycle and value $0$ to the right one. To find the resulting current $I$, %on each edge,
we sum over all possible unicycles $U$, and divide by $\sum_T c(T) = 8$. The only three unicycles $U$ with non-zero pairing $\langle\mathcal{E},\gamma(U)\rangle$ are shown, together with their contributions for each edge.  Note that one of the unicycles is the sum of the two original basis cycles, shown in orange.
\end{example}

For a connected spanning subgraph $U\subset S$, the inclusion
$\imath\colon U\to S$ induces the pullback
$\imath^*\colon H^1(S;\mathbb Z)\to H^1(U;\mathbb Z)$. A choice of a basis
$\Gamma=(\gamma_1,\dots,\gamma_k)$ of $H_1(U;\mathbb Z)$ gives rise to the
(non-canonical) standard scalar product
$H^1(U;\mathbb Z)\times H^1(U;\mathbb Z)\to\mathbb Z$ given by
$$
\langle\alpha,\beta\rangle_{U,\Gamma} := \sum_{i=1}^{k}\alpha(\gamma_i)\beta(\gamma_i).
$$
Equivalently, the basis of $H^1(U;\mathbb Z)$ dual to $\Gamma$ is orthonormal. If
$U$ is a unicycle, then $\Gamma$ is unique up to sign, hence
$\langle\alpha,\beta\rangle_{U,\Gamma}=:\langle\alpha,\beta\rangle$ is uniquely
determined by $U$.

%Let $S$ be a cohomological network and let $\omega^1,\dots,\omega^b$ be a basis
Let $\omega^1,\dots,\omega^{b}\in C^1(S;\mathbb{Z})$ be cocycles whose cohomology classes form a basis of $H^1(S;\mathbb Z)$. Let $I_i\in C_1(S;\mathbb R)$ be the current in the
cohomological circuit with $\mathcal E=\omega^i$. Then, in the basis $\omega^1,\dots,\omega^{b}$ and the dual basis of $H_1(S;\mathbb R)$, the response has the matrix %with the entries
$L_i^j=\langle\omega^j,I_i\rangle$.
%The expression is well-defined, that is, it does not depend on the choice of a %particular
%cochain representing the cohomology class $\omega^j$, because $I_i$ is a cycle by law~(I).
%
Pairing the formula in Corollary~\ref{cor-cohomological-current} with the classes $\omega^j$ gives the following result.

\begin{corollary}
    For any cohomological network, we have $L_{i}^{j}={\sum_{U}c(U)\langle\imath^*\omega^i,\imath^*\omega^j}\rangle/{\sum_T c(T)}$, where we use the notation from Corollary \ref{cor-cohomological-current}, and $\imath$ is the inclusion of $U$ into the network.
\end{corollary}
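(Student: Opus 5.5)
The plan is to pair the formula of Corollary~\ref{cor-cohomological-current} with the cocycle $\omega^j$, exactly as the sentence preceding the statement suggests. By the discussion before the statement, $L_i^j=\langle\omega^j,I_i\rangle$, where $I_i$ is the current for $\mathcal E=\omega^i$. First I will check that this quantity is well defined, i.e., independent of the cocycle representing the class $\omega^j$. Indeed, $I_i$ is a cycle by law~(I), and for any $0$-cochain $U$ we have $\langle\delta U,I_i\rangle=\langle U,\partial I_i\rangle=0$ up to sign.

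Next, substitute Corollary~\ref{cor-cohomological-current} with $\mathcal E=\omega^i$ and use bilinearity of the pairing:
$$
L_i^j=\frac{\sum_U c(U)\,\langle\omega^i,\gamma(U)\rangle\,\langle\omega^j,\gamma(U)\rangle}{\sum_T c(T)},
$$
where $\gamma(U)$ is regarded as a $1$-cycle in $S$.

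It remains to identify each product $\langle\omega^i,\gamma(U)\rangle\langle\omega^j,\gamma(U)\rangle$ with $\langle\imath^*\omega^i,\imath^*\omega^j\rangle$.

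\begin{itemize}
\item For a unicycle $U$, the group $H_1(U;\mathbb Z)\cong\mathbb Z$ is generated by the class of $\gamma(U)$, since $U$ deformation retracts onto its unique cycle. So we may take $\Gamma=(\gamma(U))$, which is unique up to sign.
\item By naturality of the pairing, $(\imath^*\omega)(\gamma(U))=\omega(\imath_*\gamma(U))=\langle\omega,\gamma(U)\rangle$. Here $\imath_*\gamma(U)$ is just $\gamma(U)$ viewed as a chain in $S$, because the pullback of a cochain simply restricts it to the edges of $U$.
\item Hence $\langle\imath^*\omega^i,\imath^*\omega^j\rangle_{U,\Gamma}=\omega^i(\gamma(U))\,\omega^j(\gamma(U))$. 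A sign change of $\gamma(U)$ cancels in the product, so the expression does not depend on the orientation of the cycle.
\end{itemize}

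I do not expect a genuine obstacle. The only point needing care is the bookkeeping of the identifications: the matrix of $L$ is taken with respect to the basis $\omega^1,\dots,\omega^b$ and its dual basis of $H_1(S;\mathbb R)$, so that its entries are indeed the pairings $\langle\omega^j,I_i\rangle$ with the indices in the stated order. One should also check that the restriction of an integral cocycle to $U$ represents the pullback class. Both points are immediate from the definitions. The resulting matrix is symmetric in $i,j$, so a transposition convention would not affect the formula.
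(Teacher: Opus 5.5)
Your proposal is correct and matches the paper's proof: you pair Corollary~\ref{cor-cohomological-current} (with $\mathcal E=\omega^i$) against $\omega^j$, use bilinearity, and identify $\langle\omega^i,\gamma(U)\rangle\langle\omega^j,\gamma(U)\rangle$ with $\langle\imath^*\omega^i,\imath^*\omega^j\rangle$, since $H_1(U;\mathbb Z)$ has rank one and is generated by $\gamma(U)$ up to sign. Your extra checks are correct details that the paper leaves implicit: independence of the cocycle representative because $I_i$ is a cycle, and the harmlessness of the transposition convention because $L$ is symmetric.
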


\begin{proof}
%Let $I_i$ be the current in the cohomological circuit with source voltage $\mathcal E=\omega^i$.
Pairing the formula in Corollary \ref{cor-cohomological-current}  with $\omega^j$ and using
bilinearity of $\langle\,\cdot\,,\,\cdot\,\rangle$, we get
%$$
%I_i=\frac{\sum_U c(U)\,\langle\omega^i,\gamma(U)\rangle\,\gamma(U)}{\sum_T c(T)},
%$$
%the sum being over all %spanning
%unicycles $U$.
%Pairing with $\omega^j$ and using
%bilinearity of $\langle\,\cdot\,,\,\cdot\,\rangle$, we get
$$
L^j_i=\langle\omega^j,I_i\rangle
=\frac{\sum_U c(U)\,\langle\omega^i,\gamma(U)\rangle\,
\langle\omega^j,\gamma(U)\rangle}{\sum_T c(T)}.
$$
For a unicycle $U$, the group $H_1(U;\mathbb Z)$ has rank one, with generator
represented by $\gamma(U)$ uniquely up to sign, so the standard scalar product on
$H^1(U;\mathbb Z)$ gives
$
\langle\imath^*\omega^i,\imath^*\omega^j\rangle
=\langle\omega^i,\gamma(U)\rangle\,\langle\omega^j,\gamma(U)\rangle ,
$
independently of the orientation of $\gamma(U)$. Substituting yields the desired formula.
%$$
%L^j_i=\frac{\sum_U c(U)\,\langle\imath^*\omega^i,\imath^*\omega^j\rangle}{\sum_T c(T)},
%$$
%as desired.
\end{proof}

\section{Voltages}
\label{sec-voltages}

\subsection{Source-synchronized networks}

In an electrical circuit, the fundamental quantities are the vertex voltages and the edge currents. Ohm’s law relates them by expressing each edge current as the product of the edge conductance and the voltage drop on it. In practice, it is more efficient to work with %vertex
voltages rather than currents, because this leads to fewer variables.
The Kirchhoff matrix %. This
is the matrix of the linear operator that maps the vector of voltages to the vector of currents at all vertices.
The response matrix is certain Schur's complement.

In a source-synchronized circuit, it is instructive to augment the usual vertex variables by variables corresponding to the synchronized sources. The Kirchhoff matrix then extends naturally to incorporate these additional degrees of freedom, encoding the contribution of each edge and each synchronized source to the overall current-balance relations.

Let %a source-synchronized
the circuit have $n$ vertices $v_1,\dots,v_n$ and $b$ synchronized sources $\omega^1,\dots,\omega^b$. %, with conductance $c(e)>0$ of each edge $e$.
For each vertex $v_k$, we define a real number $U_k$,  %\in \mathbb{R}$,
called the \emph{voltage} (or \emph{potential}) at $v_k$, such that for every edge $e$ from vertex a $v_i$ to vertex a $v_j$, the voltage drop along $e$ satisfies \emph{Ohm's law}
\begin{itemize}
\item[(C)]
$
\Delta U(e):=\shcomm{U_i - U_j = \frac{I(e)}{c(e)}-\mathcal{E}(e)}.
$
\end{itemize}

Clearly, by law (V), such numbers $U_k$ exist and are unique up to adding a constant.

Conversely, for any $\mathcal{E}_1,\dots,\mathcal{E}_b,\, U_1,\dots,U_n $, the function $I(e)=c(e)\bigl(U_i-U_j+\mathcal E(e)\bigr)$ given by law~(C) automatically satisfies law~(V) but not necessarily law~(I).

The \emph{Kirchhoff matrix} $\hat K$ is the matrix of the resulting linear operator
\[
(\mathcal{E}_1,\dots,\mathcal{E}_b,\, U_1,\dots,U_n)
\mapsto
(I_{\omega^1},\dots,I_{\omega^b},\, I_{v_1},\dots,I_{v_n})
\]
determined by Ohm's law~(C) (without assuming laws~(I) and~(V)).
Equivalently, it is the matrix of the \emph{energy dissipation form} (or the \emph{Dirichlet
energy})
$$
\sum_{i=1}^n U_iI_{v_i}+\sum_{j=1}^b \mathcal{E}_j I_{\omega^j}
=
\sum_e (\Delta U(e)+\mathcal E(e))I(e)
=
\sum_e c(e)(\Delta U(e)+\mathcal E(e))^2,
$$
where the last two sums are over all edges $e$, viewed as a quadratic form in the variables $\mathcal E_1,\dots,\mathcal E_b,U_1,\dots,U_n$.
%Here we use the convention that the matrix of a quadratic form $x^T Mx$ is $M$.
It is a $(b+n)\times(b+n)$ matrix %of size $(b+n)\times(b+n)$
with both rows and columns labeled by $\omega^1,\dots,\omega^b, v_1,\dots,v_n$ in %this
order, and the entries
\[
\begin{aligned}
\hat K_{v_i}^{v_j}
&=
\begin{cases}
-\displaystyle \sum_{e\ni v_i,v_j%e\,:\, v_i,v_j \in e
} c(e), &\text{if } v_i \ne v_j, \\
\displaystyle \sum_{e\ni v_i%e\,:\, v_i \in e
,\; e \text{ is not a loop}} c(e), &\text{if } v_i = v_j;
\end{cases}
\qquad
\hat K_{\omega^i}^{\omega^j}
=
\begin{cases}
0, \vphantom{\displaystyle \sum_{e\in \omega^i} c(e)} &\text{if } \omega^i\ne\omega^j, %\mscomm{(?)},
\\
\displaystyle \sum_{e\in \omega^i} c(e), &\text{if } \omega^i=\omega^j;
\end{cases}
\\
\hat K_{v_i}^{\omega^j} &= \hat K_{\omega^j}^{v_i}
= \sum_{e \in \omega^j,\; e \text{ starts at } v_i} c(e) - \sum_{e \in \omega^j,\; e \text{ ends at } v_i} c(e).
\end{aligned}
\]

% \begin{example}
% Let the graph have vertices $v_1,v_2,v_3=v_n$ and directed edges
% $e,f:v_1\to v_2$, $h:v_1\to v_3$, and $g:v_2\to v_3$.
% Let $\omega=\omega^1=\{e\}$ and take $P=Q=\{1\}$. In the labels
% $(\omega,\delta^1,\delta^2)$, the corresponding augmented Kirchhoff minor is
% $$
% K=
% \begin{pmatrix}
% c_e & c_e & -c_e\\
% c_e & c_e+c_f+c_h & -c_e-c_f\\
% -c_e & -c_e-c_f & c_e+c_f+c_g
% \end{pmatrix}.
% $$
% The contribution of the edge $e$ is
% $$
% K(e)=c_e
% \begin{pmatrix}
% 1&1&-1\\
% 1&1&-1\\
% -1&-1&1
% \end{pmatrix},
% $$
% which has rank one. Therefore every minor of $K(e)$ of size at least two vanishes.

% For example, consider the generalized diagonal
% $(\omega\to\delta^1,e),\quad
% (\delta^1\to\delta^2,f),\quad
% (\delta^2\to\omega,e)$.
% It is paired with
% $(\omega\to\omega,e),\quad
% (\delta^1\to\delta^2,f),\quad
% (\delta^2\to\delta^1,e)$. \mscommnew{[too informal notation]}
% The products of matrix entries are the same, but the two permutations have opposite signs.
% Equivalently, after fixing the middle factor $(\delta^1\to\delta^2,f)$, their sum is this fixed factor times the minor
% $K_{\omega}^{\delta^1}(e)K_{\delta^2}^{\omega}(e)
% -
% K_{\omega}^{\omega}(e)K_{\delta^2}^{\delta^1}(e)=0$.
% \end{example}

\begin{figure}
\begin{tikzpicture}[
    scale=0.6,
    vertex/.style={circle, fill=black, inner sep=2pt},
    edge/.style={-{Latex}, line width=1pt}
]
    % Вершины
    \node[vertex, label=below left:$v_1$]  (v1) at (0,0) {};
    \node[vertex, label=above:$v_2$]       (v2) at (2,3) {};
    \node[vertex, label=below right:$v_3$] (v3) at (4,0) {};

    % Параллельные рёбра
    \draw[edge]
        (v1) to[bend left=25]
        node[left] {$e_1$}
        (v2);

    \draw[edge]
        (v1) to[bend right=25]
        node[right] {$e_2$}
        (v2);

    % Нижнее ребро
    \draw[edge]
        (v1) --
        node[below] {$e_3$}
        (v3);

    % Правое ребро
    \draw[edge]
        (v2) --
        node[right] {$e_4$}
        (v3);
\end{tikzpicture}
\caption{The source-synchronized network of Example~\ref{ex-rank-one}.
%: three vertices, four directed
%edges, and one synchronized source $\omega=\{e_1\}$.
}%Optionally, we can show the connections corresponding to two terms that cancel]}}
\label{fig:ex-semi-connection}
\end{figure}

\begin{example}\label{ex-rank-one}
Consider a source-synchronized network with vertices $v_1,v_2,v_3$, directed edges $e_1,e_2,e_3,e_4$, and a single synchronized source $\omega^1=\{e_1\}$;
see Fig.~\ref{fig:ex-semi-connection}.
Its Kirchhoff matrix is
$$
\hat K=
\begin{pmatrix}
c(e_1) & c(e_1) & -c(e_1) & 0\\
c(e_1) & c(e_1)+c(e_2)+c(e_3) & -c(e_1)-c(e_2) & -c(e_3)\\
-c(e_1) & -c(e_1)-c(e_2) & c(e_1)+c(e_2)+c(e_4) & -c(e_4)\\
0 & -c(e_3) & -c(e_4) & c(e_3)+c(e_4)
\end{pmatrix}.
$$
%%%%%%%
% \shcomm{$$
% K=
% \begin{pmatrix}
% c(e_1) & c(e_1) & -c(e_1)\\
% c(e_1) & c(e_1)+c(e_2)+c(e_3) & -c(e_1)-c(e_2)\\
% -c(e_1) & -c(e_1)-c(e_2) & c(e_1)+c(e_2)+c(e_4)
% \end{pmatrix}.
% $$
The contribution of the edge $e_1$ is
$$
%K(e_1,e_1,e_1)=
c(e_1)
\begin{pmatrix}
1&1&-1& 0\\
1&1&-1& 0\\
-1&-1&1& 0\\
0&0&0& 0
\end{pmatrix},
$$
%%%%%
%$$
%K(e_1,e_1,e_1)=c(e_1)
%\begin{pmatrix}
% 1&1&-1\\
% 1&1&-1\\
% -1&-1&1
% \end{pmatrix},
% $$}
which has rank one. So, in the latter matrix, %$K(e_1,e_1,e_1)$
every minor of size at least two vanishes. %This will lead to additional cancellations in the minor expansions in Sec.~\ref{sec-connection-decomposition} compared to \cite[Lemma~3.12]{CurtisMorrow}.
%This shows additional cancellations in~\eqref{eq-connections} compared to determinant expansion as in \cite[Lemma~3.12]{CurtisMorrow}.
\end{example}

We now reformulate this definition in an equivalent and more conceptual way.

A \emph{trivial source} $\delta^i$ (or \emph{minus elementary coboundary}) associated with a vertex $v_i$ %, denoted by $\delta^i$,
is the set of edges starting at $v_i$ after reversing the edges that end at $v_i$.
For an edge $e$, % incident to $v_i$,
we then
write
\begin{align*}
\langle \delta^i, e \rangle &:=
\begin{cases}
1, & \text{if $e$ originally started at $v_i$ and $e$ is not a loop},\\
-1, & \text{if $e$ originally ended at $v_i$ and $e$ is not a loop},\\
0, & \text{if $e$ does not contain $v_i$ or $e$ is a loop};%\text{otherwise};
\end{cases}\\
\langle \omega^i, e \rangle &:=
\begin{cases}
1, & \text{if } e \in \omega^i,\\
0, & \text{if } e \notin \omega^i. %\text{otherwise}.
\end{cases}
\end{align*}

We now identify each vertex label $v_i$ with its trivial source $\delta^i$.
In other words, the rows and columns previously labeled by vertices $v_1,\dots,v_n$ are henceforth labeled by the trivial sources $\delta^1,\dots,\delta^n$.
With this identification, the Kirchhoff matrix admits the unified expression
\[
\hat K_{\lambda}^{\rho} = \sum_{e} \langle \lambda, e \rangle \langle \rho, e \rangle c(e),
\]
where $\lambda$ and $\rho$ run over all sources \(\omega^i\) and trivial sources \(\delta^i\). %This explain the rank-one phenomenon in Example~\ref{ex-rank-one}.

The \emph{augmented Kirchhoff matrix} $K$ is the $(b+n-1)\times(b+n-1)$ matrix obtained from $\hat K$
by deleting the last row and the last column (corresponding to $\delta^n$).

Denote by \( \mathrm{\hat{V}} \) the set of all vertices in the network, and let  \( \mathrm{ V} \) denote the set of vertices excluding the last vertex $v_n$.
Then denote by \( K_{\mathrm{V}}^{\mathrm{V}} \) the submatrix of \( K \) formed by the rows and columns labeled by all trivial sources \(\delta^i\) excluding $\delta^n$.

Let
$
M = \begin{pmatrix}
A & B \\
C & D
\end{pmatrix}
$
be a block matrix, where \(A\) and \(D\) are square matrices. If \(D\) is invertible, the \emph{Schur complement} of the block \(D\) in the matrix \(M\) is defined as $A - B D^{-1} C$.

\begin{lemma} \label{schur_complement}
    The response matrix is %given by
    the Schur complement of the minor $K_{\mathrm{V}}^{\mathrm{V}}$ in %the augmented Kirchhoff matrix~
    $K$.
\end{lemma}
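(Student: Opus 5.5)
The plan is to read the linear system defining the current as a block system for the augmented Kirchhoff matrix $K$, and then eliminate the vertex voltages. Order the labels so that $K=\begin{pmatrix}A&B\\ C&D\end{pmatrix}$. Here $A=K_{\Omega}^{\Omega}$ is indexed by the synchronized sources $\omega^1,\dots,\omega^b$, $D=K_{\mathrm V}^{\mathrm V}$ is indexed by $\delta^1,\dots,\delta^{n-1}$, and $B=C^{\mathrm T}$ are the mixed blocks. By the definition of $\hat K$, for arbitrary $\mathcal E_1,\dots,\mathcal E_b$ and $U_1,\dots,U_n$, the function $I$ defined by Ohm's law (C) has source currents $I_{\omega^j}$ and vertex currents $I_{v_i}$ given by $\hat K$ applied to $(\mathcal E,U)$.

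First, normalize the voltages. Ohm's law (C) involves only differences $U_i-U_j$, so adding a constant to all $U_k$ does not change $I$. We may therefore set $U_n=0$, which allows deleting the last column of $\hat K$. The last row can also be dropped: the identity $\sum_{i=1}^n I_{v_i}=0$ holds for every $I$, since each non-loop edge contributes $+I(e)-I(e)$. Hence law (I) at $v_1,\dots,v_{n-1}$ implies it at $v_n$. With $U_n=0$ we obtain
\[
\begin{pmatrix} I_\Omega\\ I_{\mathrm V}\end{pmatrix}=K\begin{pmatrix}\mathcal E\\ U_{\mathrm V}\end{pmatrix},
\]
where $I_\Omega=(I_{\omega^1},\dots,I_{\omega^b})$ and $I_{\mathrm V}=(I_{v_1},\dots,I_{v_{n-1}})$.

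Next, identify the true current. By the discussion after (C), the current $I$ of the circuit admits voltages $U$ with $U_n=0$ satisfying (C). Conversely, any $I$ defined from $(\mathcal E,U)$ by (C) satisfies (V). So $I$ is the current exactly when, in addition, law (I) holds, i.e.\ $I_{\mathrm V}=0$. This condition reads $C\mathcal E+DU_{\mathrm V}=0$.

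The main point, and the only nontrivial step, is to show that $D$ is invertible. $D$ is the matrix of the Dirichlet energy $\sum_e c(e)(U_i-U_j)^2$ restricted to $U_n=0$; it is the reduced graph Laplacian. If $U^{\mathrm T}DU=0$, then $U$ is constant on every edge. Since the graph is connected, $U$ is constant everywhere, and $U_n=0$ forces $U=0$. So $D$ is positive definite, hence invertible. Alternatively, one can invoke existence and uniqueness (Theorem~\ref{th-existence-uniqueness-synchronized}) together with uniqueness of voltages up to a constant.

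Given invertibility, $U_{\mathrm V}=-D^{-1}C\mathcal E$, and therefore
\[
I_\Omega=A\mathcal E+BU_{\mathrm V}=(A-BD^{-1}C)\mathcal E .
\]
By the definition of the response, $L\mathcal E=I_\Omega$ for all $\mathcal E$. Hence $L=A-BD^{-1}C$, which is the Schur complement of $K_{\mathrm V}^{\mathrm V}$ in $K$.
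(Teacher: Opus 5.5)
Your proof is correct and is essentially the paper's argument: write $K$ in block form, normalize $U_n=0$, read law~(I) as $I_{\mathrm V}=0$, invert $D=K_{\mathrm V}^{\mathrm V}$, and eliminate $U_{\mathrm V}$ to get $L=A-BD^{-1}B^{\mathrm T}$. The differences are minor. You make explicit why the $\delta^n$ row can be dropped (via $\sum_i I_{v_i}=0$), which the paper leaves implicit, and you prove invertibility of $D$ directly by positive definiteness of the reduced Laplacian, whereas the paper deduces it from Theorem~\ref{th-existence-uniqueness-synchronized}.
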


The proof is essentially the same as the proof for ordinary electrical networks.

\begin{proof}
% Let us derive a relation between the voltages and the current.
% Denote %$I_{\omega^k}:= I_k$,
% $I_{\delta^k}:=I_{v_k}$, $\Delta U_{\omega^k}:= \mathcal E_k$,
% and $\Delta U_{\delta^k}:=U_k$. Then %the definition from
% Remark~\ref{rem-total-current} and Ohm's law~(C) can be rewritten as $I_\rho=\sum_e\langle\rho,e\rangle I(e)$ and $I(e)=c(e)\sum_{\rho}\langle\rho,e\rangle\,\Delta U_{\rho}$, respectively, where $\rho$ is an arbitrary source (synchronized or trivial), and $e$ is an arbitrary edge.
% %the sums are over all sources $\rho$ (synchronized or trivial).
% %%%%%%%%
% %For any source $\rho$ (synchronized or trivial) write
% %$I_\rho=\sum_e\langle\rho,e\rangle I(e)$, and set $\Delta U_{\omega^k}:= \mathcal E_k$,
% %$\Delta U_{\delta^k}:=U_k$.  By Ohm's law~%in the form
% %(C), we have $I(e)=c(e)\sum_{\rho'}\langle\rho',e\rangle\,\Delta U_{\rho'}$, where the sum is over all sources $\rho'$.
% Thus
% $$
% I_\rho=\sum_{\rho'}\Delta U_{\rho'}\sum_e\langle\rho',e\rangle\langle\rho,e\rangle c(e)
% =\sum_{\rho'}\hat K^{\rho}_{\rho'}\,\Delta U_{\rho'}.
% $$
% %
% In what follows, set $U_n=0$, as we can always add a constant to all the voltages. Then
% $$
% I_\rho=\sum_{\rho'}K^{\rho}_{\rho'}\,\Delta U_{\rho'},\qquad\text{for }\quad\rho\ne\delta^n.
% $$
%
% %Ordering the synchronized sources before the trivial ones, the full
% %%Since rows and columns labeled by $(\omega^1,\dots,\omega^b, \delta^1,\dots,\delta^n)$ in %this
% %order,
The augmented Kirchhoff matrix %appearing here
has the block form
$
%\hat
K=\begin{pmatrix} A & %\hat
B\\ %\hat
B^{\mathrm{T}} & %\hat
D\end{pmatrix},
$
where $A$ is the $b\times b$ block labeled by synchronized sources and $%\hat
D=K_{\mathrm{V}}^{\mathrm{V}}$ is the
$(n-1)\times (n-1)$ block labeled by trivial sources excluding $\delta^n$.
Set $U_n=0$, as we can always add a constant to all the voltages. Set $I(e):=c(e)\bigl(U_i-U_j+\mathcal E(e)\bigr)$.
With the notation $\Delta U_\omega:=(\mathcal E_1,\dots,\mathcal E_b)^{\mathrm T}$, $\Delta U_\delta:=(U_1,\dots,U_{n-1})^{\mathrm T}$,
$I_\omega:=(I_{\omega^1},\dots,I_{\omega^b})^{\mathrm T}$, and
$I_\delta:=(I_{v_1},\dots,I_{v_{n-1}})^{\mathrm T}$, we have
$$
\begin{pmatrix} A & %\hat
B\\ %\hat
B^{\mathrm T} & %\hat
D\end{pmatrix}
\begin{pmatrix}\Delta U_\omega\\ \Delta U_\delta\end{pmatrix}
=\begin{pmatrix} I_\omega\\ I_\delta\end{pmatrix}.
$$
The currents $I(e)=c(e)\bigl(U_i-U_j+\mathcal E(e)\bigr)$ automatically satisfy law~(V), and law (I) is equivalent to $I_\delta=B^{\mathrm T} \Delta U_\omega+D\Delta U_\delta=0$.
The sub-matrix $D$ is invertible by Theorem~\ref{th-existence-uniqueness-synchronized}.
%because $\det D=\det K_{\mathrm{V}}^{\mathrm{V}}=\sum_T c(T)>0$ is the sum over spanning trees by the matrix-tree theorem.
%%%%%%
%The block $\hat D$ is the weighted graph Laplacian of the network, hence singular: its kernel consists of the constant potentials $U_1=\dots=U_n$, because voltages are defined only up to a common additive constant. This does not change $i$ or $\Delta U_f$, since the columns of $\hat B$ sum to zero. We may therefore normalize $U_n=0$; deleting the row and column of $\delta^n$ replaces $\hat D$ by the reduced Laplacian $D=K^V_V$, which is invertible (indeed $\det D=\sum_T c(T)>0$ by the matrix-tree theorem), and
%$\hat B$ by $B$. With $\Delta U_g=(U_1,\dots,U_{n-1})^T$ the system becomes
%$$
%\begin{bmatrix} A & B\\ B^{T} & D\end{bmatrix}
%\begin{bmatrix}\Delta U_f\\ \Delta U_g\end{bmatrix}
%=\begin{bmatrix} i\\ 0\end{bmatrix}.
%$$
%The second block row then gives
Thus $\Delta U_\delta=-D^{-1}B^{\mathrm T} \Delta U_\omega$. Substituting into
the first block row gives $I_\omega=(A-BD^{-1}B^{\mathrm T} )\Delta U_\omega$. Hence the response
$\Delta U_\omega\mapsto I_\omega$ equals the Schur complement $A-BD^{-1}B^{\mathrm T} $ of
$D=K_{\mathrm{V}}^{\mathrm{V}}$ in $K$.
\end{proof}

\subsection{Cohomological networks}

In a cohomological circuit, the voltage is more naturally viewed as a multi-valued function, whose value may change after performing a topologically nontrivial loop. Thus, the voltage drop $\Delta U$ is conceptualized as a 1-cocycle that is not necessarily a coboundary of a 0-cochain $U$.
%Therefore, we work with the edge variables $\Delta U$ instead of vertex variables $U$.
Furthermore, since the value $\mathcal{E}(e)$ is
no longer defined, it is absorbed into $\Delta U$ in Ohm's law so that the latter becomes $\Delta U(e)=\shcomm{I(e)/c(e)=[V(I)](e)}$.

This is formalized as follows. For an arbitrary chain $I\in C_1(S;\mathbb{R})$, we define the \emph{voltage drop} as $\Delta U:=V(I)\in C^1(S;\mathbb{R})$.
If $I$ satisfies law~(V), then $\Delta U\in Z^1(S;\mathbb{R})$ is a cocycle. The \emph{energy dissipation form} (or the \emph{Dirichlet energy}) is the quadratic form on $Z^1(S;\mathbb{R})$ given by
$$
\langle \Delta U,I\rangle=\langle \Delta U,V^{-1}(\Delta U)\rangle=\sum_e c(e)(\Delta U(e))^2,
$$
%for each $\Delta U\in Z^1(S;\mathbb{R})$
where the summation is over all the edges $e$. %The \emph{Kirchhoff matrix} is the matrix of the energy dissipation form in \mscomm{TBC}

%To introduce the Kirchhoff matrix,
We construct a special basis of $Z^1(S;\mathbb{Z})$ as follows.
First, take cocycles $\omega^1,\dots,\omega^b\in Z^1(S;\mathbb{Z})$ whose cohomology classes form a basis of $H^1(S;\mathbb{Z})$. Second, for each vertex \(v_i\), define the  \emph{minus elementary coboundary} \(\delta^i\in Z^1(S;\mathbb{Z})\) by the following formula, for each edge $e$ of $S$:
$$
\delta^i(e):=
\begin{cases}
\shcomm{1}, & \text{if $e$ starts at $v_i$ and $e$ is not a loop},\\
\shcomm{-1}, & \text{if $e$ ends at $v_i$ and $e$ is not a loop},\\
0, & \text{if $e$ does not contain $v_i$ or $e$ is a loop}.
\end{cases}
$$
These $b+n$ cocycles span $Z^1(S;\mathbb{Z})$ and satisfy the single relation $\delta^1+\dots+\delta^n=0$, so that $\omega^1,\dots,\omega^b,\delta^1,\dots,\delta^{n-1}$ is a basis.

The \emph{Kirchhoff matrix} $\hat K$ is then defined as the matrix of the energy dissipation form
(or, equivalently, of the operator
$Z^1(S;\mathbb{R})\hookrightarrow  C^1(S;\mathbb{R})\overset{V^{-1}}{\to} C_1(S;\mathbb{R})\to C_1(S;\mathbb{R})/\partial C_2(S;\mathbb{R})$)
%$V^{-1}\colon Z^1(S;\mathbb{R})\to C_1(S;\mathbb{R})/\partial C_2(S;\mathbb{R})$)
with respect to the cocycles $\omega^1,\dots,\omega^b,\delta^1,\dots,\delta^n$.
It is the \((n+b) \times (n+b)\) matrix with the entries $\hat K_{\lambda}^{\rho} = \langle \lambda,V^{-1}(\rho)\rangle $, %whose rows and columns are labeled with
where $\lambda$ and $\rho$ run over both %basis cocycles
$\omega^1,\dots,\omega^b$ and %elementary coboundaries
$\delta^1,\dots,\delta^n$.
The \emph{augmented Kirchoff matrix} $K$ is obtained from $\hat K$ by removing the row and the column corresponding to the minus elementary coboundary $\delta^n$, so that $K$ is the matrix of the energy dissipation form in the above basis. % constructed above.
Analogously to the source-synchronized case, we get the following result.

\begin{lemma} \label{l-schur_complement-var}
    The response matrix of a cohomological circuit is the Schur complement of the minor $K_{\mathrm{V}}^{\mathrm{V}}$ in the augmented Kirchhoff matrix~$K$.
\end{lemma}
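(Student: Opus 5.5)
We mimic the proof of Lemma~\ref{schur_complement}, replacing vertex potentials and edge source voltages by a parametrization of cocycles. Every cocycle in $Z^1(S;\mathbb{R})$ can be written uniquely as
$$\Delta U=\sum_{j=1}^b \mathcal{E}_j\omega^j+\sum_{i=1}^{n-1}U_i\delta^i,$$
because $\omega^1,\dots,\omega^b,\delta^1,\dots,\delta^{n-1}$ is a basis. Set $I:=V^{-1}(\Delta U)$, so that $I(e)=c(e)\Delta U(e)$. Then $V(I)=\Delta U$ is automatically a cocycle, and its cohomology class is $\sum_j\mathcal{E}_j[\omega^j]$, since the $\delta^i$ are coboundaries. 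Hence law~(V), with source voltage $\mathcal{E}=\sum_j \mathcal{E}_j[\omega^j]$, holds for every choice of the numbers $U_i$. Conversely, any $I$ satisfying law~(V) has this form for a unique vector $(U_1,\dots,U_{n-1})$.

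Next we compute the pairings of $I$ with the basis cocycles. For each basis cocycle $\lambda$ we have
$$\langle\lambda,I\rangle=\sum_\rho \Delta U_\rho\,\langle\lambda,V^{-1}(\rho)\rangle=\sum_\rho K_\lambda^\rho\,\Delta U_\rho,$$
where $\Delta U_{\omega^j}=\mathcal{E}_j$ and $\Delta U_{\delta^i}=U_i$. Write $K=\begin{pmatrix}A&B\\ B^{\mathrm T}&D\end{pmatrix}$ with $D=K_{\mathrm V}^{\mathrm V}$; this block matrix is symmetric because $\langle\lambda,V^{-1}\rho\rangle=\sum_e c(e)\lambda(e)\rho(e)$. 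In this notation, the vector of pairings $(\langle\omega^j,I\rangle)_j,(\langle\delta^i,I\rangle)_{i<n}$ equals $K$ applied to $(\Delta U_\omega,\Delta U_\delta)$.

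The key observation is that $\langle\delta^i,I\rangle=\sum_e\delta^i(e)I(e)$ is, up to sign, $[\partial I](v_i)$, the total current at $v_i$; loops contribute zero to both sides. Moreover, $\sum_{i=1}^n\langle\delta^i,I\rangle=0$ identically, since $\delta^1+\dots+\delta^n=0$. Therefore law~(I), $\partial I=0$, is equivalent to $\langle\delta^i,I\rangle=0$ for $i=1,\dots,n-1$, i.e., to $B^{\mathrm T}\Delta U_\omega+D\Delta U_\delta=0$.

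The block $D$ is invertible. Indeed, if $Dx=0$, then the cocycle $\sum_i x_i\delta^i$ with $\mathcal{E}=0$ produces a current satisfying (I) and (V), which vanishes by Theorem~\ref{th-existence-uniqueness-cohomological}. Hence $\sum_i x_i\delta^i=0$, and so $x=0$ by linear independence. (Alternatively, $D$ is the reduced Laplacian of the $1$-skeleton, whose determinant is $\sum_T c(T)>0$.) We then solve $\Delta U_\delta=-D^{-1}B^{\mathrm T}\Delta U_\omega$ and obtain
$$(\langle\omega^j,I\rangle)_j=(A-BD^{-1}B^{\mathrm T})\,\Delta U_\omega.$$

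Finally, we check that $\langle\omega^j,I\rangle$ is the $j$-th coordinate of the homology class of $I$ in the basis dual to $[\omega^1],\dots,[\omega^b]$. This is the main point needing care. Because $I$ is a cycle, the pairing $\langle\omega^j,I\rangle$ depends only on the cohomology class of $\omega^j$ and the homology class of $I$. It agrees with the matrix convention $L_i^j=\langle\omega^j,I_i\rangle$ fixed before the cohomological response corollary. Taking $\mathcal{E}=\omega^i$ gives column (or row, by symmetry) $i$ of the Schur complement, which completes the argument.
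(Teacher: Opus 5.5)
Your proof is correct and takes the same route as the paper, which proves this lemma only by saying it is analogous to Lemma~\ref{schur_complement}: block-decompose $K$, note that law (V) holds automatically for $I=V^{-1}(\Delta U)$, rewrite law (I) as $B^{\mathrm T}\Delta U_\omega+D\Delta U_\delta=0$, get invertibility of $D$ from the uniqueness theorem, and eliminate $\Delta U_\delta$. Your two additional checks fill in details the paper leaves implicit: that $\langle\omega^j,I\rangle$ are the coordinates of the homology class of $I$ in the dual basis, and that symmetry of the Schur complement reconciles the convention $L_i^j=\langle\omega^j,I_i\rangle$.
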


Note that the Kirchhoff matrix depends on the choice of cocycles $\omega^1,\dots,\omega^b$, whereas the response matrix depends only on their cohomology classes.
In the particular case of networks on surfaces, an analog of Lemma~\ref{l-schur_complement-var} was proved by a different method in \cite{Lam-25}.

%\newpage

\section{All-minors matrix-tree theorem}
\label{sec-all-minor}

In this section, we state combinatorial formulae for all minors of the response matrices of source-synchronized and cohomological networks. %. Consider a homological network
%with $b$ synchronized %voltage
%sources $\omega^{1},\dots,\omega^{b}$.
These formulae are proved in Secs.~\ref{sec-connection-decomposition}--\ref{sec-conclusion}.

\subsection{Source-synchronized networks}

Consider a source-synchronized network with $b$ synchronized %voltage
sources $\omega^{1},\dots,\omega^{b}$.
In what follows, $P,Q\subset\{1,\ldots,b\}$ are two subsets with the same number
of elements (denoted by $k$), and their elements $p_1<\dots<p_k$ and $q_1<\dots<q_k$ are always listed in increasing
order, %$P=\{p_1<\dots<p_k\}$ and $Q=\{q_1<\dots<q_k\}$,
so that $L_P^Q$ is a submatrix of~$L$. %in the usual sense.
Our goal is to express the minor
$\det L_P^Q$ as a weighted sum over certain subgraphs of the network.
These %graphs contributing to the sum
are so-called valid subgraphs, which we are going to define now. See Fig.~\ref{fig:valid-subgraphs}.

\begin{figure}
    \centering
    \scalebox{0.75}{\begin{tikzpicture}
\draw[color=blue]
(2,3) to[short, *-*, i_=$e_1$] (0,0)
(4,0) to[short, *-*, i=$e_2$] (2,1);
\draw[color=red]
(2,3) to[short, *-*, i=$e_3$] (4,0)
(0,0) to[short, *-*, i_=$e_4$] (2,1);
\draw
(2,3) to[short, *-*, i=$e_5$] (2,1)
(0,0) to[short, *-*, i=$e_6$] (4,0);
\node[text=blue] at (1.5,1.3) {$P$};
\node[text=red] at (2.5,1.3) {$Q$};
\node at (2,-0.5) {$\det L_{12}^{34}={(-c(G_1)+c(G_2))}/{\sum_T c(T)}$};
\end{tikzpicture}}
\quad
\scalebox{0.75}{\begin{tikzpicture}
\draw[color=blue]
(2,3) to[short, *-*, i_=$e_1$] (0,0)
(4,0) to[short, *-*, i=$e_2$] (2,1);
\draw[color=red]
(2,3) to[short, *-*, i=$e_3$] (4,0)
(0,0) to[short, *-*, i_=$e_4$] (2,1);
\draw
(2,3) to[short, *-*, i=$e_5$] (2,1);
%(0,0) to[short, *-*, i=$e_6$] (4,0);
\node[text=orange] at (1.5,1.3) {$\gamma_1$};
\node[text=orange] at (2.5,1.3) {$\gamma_2$};
\node at (2,-0.5) {$G_1$};
 \draw[-{Latex}, thick, draw=orange] (1.8,2.6) -- (0.2,0.2);
 \draw[-{Latex}, thick, draw=orange] (0.2,0.2) -- (1.8,1);
 \draw[-{Latex}, thick, draw=orange] (1.8,1) -- (1.8,2.6);
 \draw[-{Latex}, thick, draw=orange] (2.2,2.6) -- (3.8,0.2);
 \draw[-{Latex}, thick, draw=orange] (3.8,0.2) -- (2.2,1);
 \draw[-{Latex}, thick, draw=orange] (2.2,1) -- (2.2,2.6);
\end{tikzpicture}}
\quad
\scalebox{0.75}{\begin{tikzpicture}
\draw[color=blue]
(2,3) to[short, *-*, i_=$e_1$] (0,0)
(4,0) to[short, *-*, i=$e_2$] (2,1);
\draw[color=red]
(2,3) to[short, *-*, i=$e_3$] (4,0)
(0,0) to[short, *-*, i_=$e_4$] (2,1);
\draw
%(2,3) to[short, *-*, i=$e_5$] (2,1)
(0,0) to[short, *-*, i=$e_6$] (4,0);
\node[text=blue] at (1.5,1.3) {$P$};
\node[text=red] at (2.5,1.3) {$Q$};
\node at (2,-0.5) {$G_2$};
\end{tikzpicture}}
    \caption{An electrical circuit (left) %, where each edge contains an individual battery,
    and two subgraphs $G_1$ and $G_2$ that are valid relative to both $P=\{1,2\}$ and $Q=\{3,4\}$ separately (middle and right). The basis simple cycles $\gamma_1$ and $\gamma_2$ in $G_1$ are shown
    in orange. See Example~\ref{ex-all-minor-matrix-tree}.}
    \label{fig:valid-subgraphs}
\end{figure}

A set \( E \) of edges %in a network
is \textit{valid relative to} \( P \) if \(|E| = |P|\) and
all the synchronized sources $\omega^{p_1}, \ldots, \omega^{p_k}$ intersect \( E \).
%the set of synchronized sources intersecting \( E \) is \(\{\omega^{p_1}, \ldots, \omega^{p_k}\}\).
%
A subgraph \( G \) %of the network
is \emph{valid relative to} \( P \) if it contains a valid set of edges relative to \( P \) such that the subgraph obtained by deleting these edges from \( G \) is a spanning tree.

Informally, a subgraph is valid relative to $P$ if it contains a choice of $|P|$ edges, one contained in each source $\omega^p$, $p\in P$, such that removing these edges leaves a spanning tree.

Take a subgraph $G$ valid relative to both $P$ and $Q$ separately.
Given a basis of simple cycles $\Gamma=(\gamma_1, \ldots, \gamma_{|P|})$ in $G$ %\mscomm{$u_i\to \gamma_i$!}
(i.e., an ordered set of simple %oriented
cycles whose homology classes form a basis of $H_1(G;\mathbb{Z})$), %\mscomm{$U\to \Gamma$!}
introduce the $b\times b$ \emph{Gram matrix} $M=M(G,\Gamma)$ with the entries
$$M_{i}^{j} = \sum_{l=1}^{|P|} \langle \gamma_l , \omega^i \rangle \langle \gamma_l , \omega^j \rangle.$$
The \emph{multiplicity} of $G$ in the minor $L_P^Q$ is
$$
\mathrm{mult}_P^Q(G) := \det M_P^Q. %\left(A(G,U)_P^Q\right).
$$ We shall see that the multiplicity does not depend on the choice of the basis $\Gamma$.

\begin{theorem} \label{th-all-minor-matrix-tree}
    % If %$P, Q \in \{1,\ldots,b\}$ and
    % $P \cap Q=\emptyset$, then t
    The minor of the response matrix of a source-synchronized network is given by
    $$\det L_P^Q = \frac{\sum_G \mathrm{mult}_P^Q(G) \cdot c(G)}{\sum_T c(T)},$$
    where the sum in the numerator is over all subgraphs $G$ that are valid relative to both $P$ and $Q$ separately, and the sum in the denominator is over all spanning trees $T$.
\end{theorem}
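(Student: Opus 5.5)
The plan is to reduce the minor to a ratio of two determinants of the augmented Kirchhoff matrix, expand the numerator with Cauchy--Binet, and then reorganize the terms by the subgraph of edges that occurs. This avoids the parafermionic machinery; it is a purely linear-algebraic all-minors argument.

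\emph{Step 1 (Schur complement).} By Lemma~\ref{schur_complement}, $L=A-BD^{-1}B^{\mathrm T}$ with $D=K_{\mathrm V}^{\mathrm V}$. The standard identity for minors of a Schur complement gives
$$\det L_P^Q=\frac{\det K_{P\cup \mathrm V}^{Q\cup \mathrm V}}{\det K_{\mathrm V}^{\mathrm V}}.$$
Here $K_{P\cup\mathrm V}^{Q\cup\mathrm V}$ keeps the rows $\omega^{p}$, $p\in P$, the columns $\omega^q$, $q\in Q$, and all trivial sources $\delta^1,\dots,\delta^{n-1}$. The denominator is the reduced Laplacian, so by the classical matrix-tree theorem it equals $\sum_T c(T)$. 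It remains to show that the numerator equals $\sum_G \mathrm{mult}_P^Q(G)\,c(G)$.

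\emph{Step 2 (Cauchy--Binet).} Write $\hat K=N C N^{\mathrm T}$, where $N$ is the incidence matrix with entries $N_\lambda^e=\langle\lambda,e\rangle$, $\lambda$ running over sources and trivial sources, and $C=\mathrm{diag}(c(e))$. Then $K_{P\cup\mathrm V}^{Q\cup\mathrm V}=N_{P\cup\mathrm V}\,C\,(N_{Q\cup\mathrm V})^{\mathrm T}$. By Cauchy--Binet,
$$\det K_{P\cup\mathrm V}^{Q\cup\mathrm V}=\sum_{G}c(G)\,\det N_{P\cup \mathrm V}^{G}\,\det N_{Q\cup\mathrm V}^{G},$$
where $G$ runs over edge sets of size $k+n-1$.

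Expanding $\det N_{P\cup\mathrm V}^G$ by Laplace along the $k$ rows $\omega^p$ gives a sum over $k$-subsets $E\subset G$. Each term is a product of two factors:
\begin{itemize}
\item $\det N_P^E$, which is nonzero only if $E$ contains exactly one edge of each $\omega^p$, since the sources are disjoint and the entries are $0/1$;
\item $\det N_{\mathrm V}^{G\setminus E}=\pm1$ exactly when $G\setminus E$ is a spanning tree, and $0$ otherwise.
\end{itemize}
Hence $\det N_{P\cup\mathrm V}^G\neq0$ forces $G$ to be valid relative to $P$. Likewise, the $Q$ factor forces validity relative to $Q$. Such a $G$ is connected with cycle rank $k$.

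\emph{Step 3 (identify the multiplicity).} This is the main obstacle: one must show
$$\det N_{P\cup\mathrm V}^G\,\det N_{Q\cup\mathrm V}^G=\det M(G,\Gamma)_P^Q.$$
The plan is as follows.
\begin{itemize}
\item The rows $\delta^i$ restricted to $G$ span the coboundary space of $G$, of rank $n-1$.
\item Choose a basis $\Gamma$ of cycles of $G$. Form the $|G|\times|G|$ matrix $Z$ whose first $n-1$ columns are $\delta$-type dual vectors, for instance cut vectors, and whose last $k$ columns are the cycle vectors $\gamma_l$.
\item Multiplying $N_{P\cup\mathrm V}^G$ on the right by $Z$ gives a block-triangular matrix. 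Its $\mathrm V$ block times the cut block is invertible, and its $\mathrm V$ rows annihilate the cycle vectors, because $\delta$ is a coboundary and cycles are cycles. The remaining block is $(\langle\gamma_l,\omega^p\rangle)_{p,l}$.
\item Therefore $\det N_{P\cup\mathrm V}^G\cdot\det Z=\det(\mathrm V\text{-block})\cdot\det(\langle\gamma_l,\omega^p\rangle)$, and the same holds for $Q$ with the same $Z$.
\end{itemize}
Multiplying the two identities, the factor $\det Z^2$ and the squared $\mathrm V$-block determinant cancel. To make this precise, take $Z$ with $\det Z=\pm1$: a basis of $\mathbb Z^G$ adapted to the decomposition into spanning-tree cuts and fundamental cycles. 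The fundamental cycles with respect to the tree $G\setminus E$ are unimodular. With this choice the $\mathrm V$-block determinant is also $\pm1$, so its square is $1$.

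This leaves $\det(\langle\gamma_l,\omega^p\rangle)_{p,l}\cdot\det(\langle\gamma_l,\omega^q\rangle)_{q,l}$. By Cauchy--Binet for a $k\times k$ product, this equals $\det M_P^Q$. The independence of $\Gamma$ then follows, since a change of integral basis multiplies both factors by the same $\pm1$.

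The delicate points are the sign bookkeeping and the existence of a unimodular $Z$ compatible with both $P$ and $Q$. I would handle both by fixing one spanning tree $T_0\subset G$, using its fundamental cycles and cuts, and checking that the product of the two Laplace signs is symmetric in $P$ and $Q$. Putting Steps 1--3 together yields the theorem.
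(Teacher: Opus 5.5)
Your argument is correct. After Step~1, which is exactly equality~(1) in the proof of Lemma~\ref{l-connections}, it takes a genuinely different route from the paper.

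The paper proceeds in several stages:
\begin{itemize}
\item it expands $\det K^{Q\cup\mathrm V}_{P\cup\mathrm V}$ by multilinearity in the columns only and reads connections off the generalized diagonals (Lemma~\ref{l-connections});
\item it repackages the sum as the parafermionic observable $F_P^Q$ (Lemma~\ref{connection_F_and_L});
\item it cancels invalid forests by a sign-reversing involution (Lemma~\ref{lemma:valid_forests});
\item it matches valid forests with valid subgraphs via the modified Temperley correspondence (Lemma~\ref{lemma:bijection});
\item only at the end does it identify the connection sign with $\det X_A(\Gamma)\det X_B(\Gamma)$ and sum over $A,B$ (Lemmas~\ref{sign_lemma}--\ref{det_lemma}).
\end{itemize}

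You instead apply Cauchy--Binet to $K=NCN^{\mathrm T}$ on both sides at once. Every term then factors into two maximal minors of $N$. Each equals $\pm\det X_P(\Gamma)$ (resp.\ $\pm\det X_Q(\Gamma)$, with the same sign) by a unimodular change of basis of $\mathbb Z^G$ adapted to $Z_1(G;\mathbb Z)=\ker N^G_{\mathrm V}$.

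What your route buys:
\begin{itemize}
\item It is much shorter.
\item It makes the independence of $\Gamma$ and the vanishing for non-valid $G$ (Remark~\ref{rem:intersection-matrix}) immediate.
\item Step~3 never uses that the $\omega^p$ are disjoint $0/1$ rows. So the same computation, combined with Lemma~\ref{l-schur_complement-var}, proves Corollary~\ref{cor-all-minor-matrix-tree} directly, without refinement.
\end{itemize}

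What it does not produce is what the authors value most. These are the intermediate identities $\det L_P^Q\det K^{\mathrm V}_{\mathrm V}=F_P^Q$ and $F(A\leftrightarrow B)=\sum_G\mathrm{sgn}_A^B(\alpha(G))\,c(G)$ for fixed $A,B$. They read the minors as multi-point correlation functions of spanning-tree connections. Conversely, your theorem together with Lemma~\ref{connection_F_and_L} gives a short proof of Proposition~\ref{main_proposition}.

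Step~3 needs two small repairs.

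First, your worry about a $Z$ compatible with both $P$ and $Q$ is unfounded. $Z$ depends only on $G$ and $\Gamma$, not on $P$, $Q$, or the sets $E$ from the Laplace expansion. The block-permutation sign $(-1)^{k(n-1)}$ is the same for both factors.

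Second, fundamental cut vectors together with cycle vectors do not give a unimodular $Z$. Its determinant is $\pm$ the number of spanning trees of $G$, already $\pm2$ for two edges joining the same pair of vertices. Either of the following fixes this:
\begin{itemize}
\item take the unit vectors of the edges of a fixed spanning tree $T_0\subset G$ together with a $\mathbb Z$-basis of $Z_1(G;\mathbb Z)$, so that the $\mathrm V$-block is the reduced incidence matrix of $T_0$;
\item or observe that $\det(N^G_{\mathrm V}z)=\pm\det Z$ for every completion $z$, so the two factors cancel in any case.
\end{itemize}
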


\begin{example} \label{ex-all-minor-matrix-tree} Consider the electric circuit shown in Fig.~\ref{fig:valid-subgraphs} to the left. Introduce a synchronized source $\omega^i:=\{e_i\}$ for each $i=1,\dots,6$.
%Each of the six edges contains an individual battery, so the synchronized sources are given by $\omega^1 = \{e_1\}$ , $\omega^2 = \{e_2\}$, $\omega^3 = \{e_3\}$, $\omega^4 = \{e_4\}$, $\omega^5 = \{e_5\}$, $\omega^6 = \{e_6\}$.
Pick a subgraph $G_1$ valid relative to both $P = \{1, 2\}$ and $Q=\{3, 4\}$ separately (Fig.~\ref{fig:valid-subgraphs}, middle). %Since each edge carries its own battery,
The Gram matrix $M$ has size $6 \times 6$:
%\mscomm{More details!}
$$M=M(G_1,\Gamma) =\begin{pmatrix}
    .& .& 0& 1&. &.\\
    .& .& 1& 0&. &.\\
    .& .& .& .&. &.\\
    .& .& .& .&. &.\\
    .& .& .&. &. &.\\
    .& .& .&. &. &.
\end{pmatrix}
\qquad \text{and} \qquad
\mathrm{mult}^{34}_{12}(G_1)=\det M^{34}_{12}=-1.
%\qquad\text{and}
$$
To compute the entries of $M$, pick a basis %for homology of the graph consisting
of two cycles $\gamma_1$ and $\gamma_2$. %To find
The entry in row one column four is %, we sum
$M_1^4=\langle \gamma_1 , \omega^1 \rangle \langle \gamma_1 , \omega^4 \rangle + \langle \gamma_2 , \omega^1 \rangle \langle \gamma_2 , \omega^4 \rangle = (+1) \cdot (+1) + 0 \cdot 0 = 1$. On the other hand, $M_1^3=0$ %the entry in row one column three is $0$ since
because neither of $\gamma_1, \gamma_2$ contains both $e_1$ and $e_3$. Analogously, $M_2^3=1$ and $M_2^4=0$.

Now, there is only one other graph $G_2$ valid relative to both $P = \{1, 2\}$ and $Q=\{3, 4\}$ separately. Computing the determinant $\det M^{34}_{12}$ for each of the two valid graphs and using notation $c_i := c(e_i)$, we get the coefficients on the right in the following equality: %\ppcomm{Perhaps it would make sense to compute and display here the $2 \times 2$ minor of $L$?}
%\mscomm{Rewrite the next two equations using notation $c_i := c(e_i)$!}
$$\det L_{12}^{34}=\frac{-c(G_1)+c(G_2)}{\sum_T c(T)}=\frac{-c_1c_2c_3c_4c_5+c_1c_2c_3c_4c_6}{\sum_T c(T)},$$
where the denominator is the sum over all spanning trees:
\begin{align*}
\sum_T c(T)&=
c_1c_2c_3
+ c_1c_2c_4
+ c_1c_2c_5
+ c_1c_2c_6
+ c_1c_3c_4
+ c_1c_3c_5
+ c_1c_4c_6
+ c_1c_5c_6
+ c_2c_3c_4
+ c_2c_3c_6
\\
&\quad
+ c_2c_4c_5
+ c_2c_5c_6
+ c_3c_4c_5
+ c_3c_4c_6
+ c_3c_5c_6
+ c_4c_5c_6.
\end{align*}

It can be checked that this indeed equals the corresponding minor of the response matrix~$L$: %Here $c_i := c(e_i)$.
%For clarity, we explicitly write the corresponding $2 \times 2$ minor of the response matrix $L$.
{%\small
\[
%\begin{aligned}
%&
L_{12}^{34}
=
\frac{1}{\sum_T c(T)}
%\\
%& \cdot
\begin{pmatrix}
-c_1c_3(
c_2c_4
+ c_2c_6
+ c_4c_6
+ c_5c_6)

&
c_1c_4(c_2c_3
+ c_2c_5
+ c_3c_5
+ c_5c_6)

\\[3pt]

c_2c_3(c_1c_4
+ c_1c_5
+ c_4c_5
+ c_5c_6)

&
-c_2c_4(c_1c_3
+ c_1c_6
+ c_3c_6
+ c_5c_6)
\end{pmatrix}.
%\\
%& /
%(
%c(e_1)c(e_2)c(e_3)
%+ c(e_1)c(e_2)c(e_4)
%+ c(e_1)c(e_2)c(e_5)
%+ c(e_1)c(e_2)c(e_6)
%+ c(e_1)c(e_3)c(e_4)
%+ c(e_1)c(e_3)c(e_5)
%\\
%&+ c(e_1)c(e_4)c(e_6)
% + c(e_1)c(e_5)c(e_6)
%+ c(e_2)c(e_3)c(e_4)
%+ c(e_2)c(e_3)c(e_6)
%+ c(e_2)c(e_4)c(e_5)
%+ c(e_2)c(e_5)c(e_6)
%\\
%&+ c(e_3)c(e_4)c(e_5)
%+ c(e_3)c(e_4)c(e_6)
%+ c(e_3)c(e_5)c(e_6)
%+ c(e_4)c(e_5)c(e_6)
%).
%\end{aligned}
\]
}

\end{example}

The multiplicities can be greater than one, already in the simplest situations; a less trivial example was provided by W.Y.~Lam et al.~\cite[Sec.~9]{Lam-25}.

\begin{example} \label{ex-two-edges}
Consider the source-synchronized network with two vertices joined by two edges $e_1$ and $e_2$, directed so that they form an oriented cycle $\gamma$, and let
$\omega^1:=\{e_1,e_2\}$ be the only synchronized source. %, so that $b=1$.
Take
$P=Q=\{1\}$. The whole network $G$ is the only valid sub-graph, the single cycle
$\Gamma=(\gamma)$ is a basis, $\langle\gamma,\omega^1\rangle=2$, and
the spanning trees are $\{e_1\}$ and $\{e_2\}$. Hence
$$M=M(G,\Gamma)=\big(4\big),\qquad \mathrm{mult}^{1}_{1}(G)=4,\qquad
\det L = L^{1}_{1}=\frac{4\,c(e_1)c(e_2)}{c(e_1)+c(e_2)}.$$
%the spanning trees being $\{e_1\}$ and $\{e_2\}$.
\end{example}

\begin{example} \label{ex-network-on-double-torus}
For the source-synchronized network $G$ shown in Fig.~\ref{fig:network-on-double-torus} to the right, we find %\mscommnew{Explain what is $\Gamma$ --- add a formula to the right from the figure.}
$$M = M(G, \Gamma) =
\begin{pmatrix}
1 & 0 & 1 & 0 \\
0 & 1 & 2 & \mscommnew{-}1 \\
1 & 2 & 5 & \mscommnew{-}2 \\
0 & \mscommnew{-}1 & \mscommnew{-}2 & 1
\end{pmatrix},
\qquad
\mathrm{mult}^{13}_{13}(G)=\det M^{13}_{13}=4,
\qquad%\text{and}
%$$
%$$
\det L_{13}^{13}=\frac{4c(G)}{\sum_T c(T)}.
%=\frac{4c(e_1)c(e_2)c(e_3)c(e_4)c(e_5)c(e_6)}{\sum_T c(T)}.
$$
\end{example}

\subsection{Cohomological networks}

There is an analogous formula for the minors of the response matrix of a cohomological network. % $X$.
%Again, let $P = (p_1, \ldots, p_k)$ and $Q = (q_1, \ldots, q_k)$ be two ordered subsets of $\{1,\ldots,b\}$,  where $b=\dim H^1(X;\mathbb{Z})$.
Again, $P,Q\subset\{1,\ldots,b\}$ are two subsets with $|P|=|Q|=k$, listed in increasing order, % as above,
where now $b=\dim H^1(S;\mathbb{Z})$.
A subgraph $G$ %containing all the vertices of the cohomological network
is \emph{valid} with respect to $P$ if it \mscommnew{is connected,} contains all vertices, has $n+|P|-1$ edges, and the elements $\imath^*\omega^{p_1},\dots,\imath^*\omega^{p_k}\in H^1(G;\mathbb{Z})\cong \mathbb{Z}^k$ are linearly independent.
For a subgraph $G$ valid with respect to both $P$ and $Q$ separately, take a basis $\Gamma =(\gamma_1,\dots,\gamma_k)$ of $H_1(G;\mathbb{Z})$ and define the %$b\times b$
\emph{Gram matrix} $M=M(G,\Gamma)$ and the \emph{multiplicity} of $G$ in %the minor
$L^Q_P$ as
$$
M_i^j:=\langle\imath^*\omega^i,\imath^*\omega^j\rangle_{G,\Gamma} =
%\langle\i^*\mathcal{S}_i,\left(\i^*\mathcal{S}_j\right)^\mathrm{T} \rangle  =
\sum_{l=1}^k
\langle\omega^i,\gamma_l\rangle
\langle\omega^j,\gamma_l\rangle
\qquad\text{and}\qquad
\mathrm{mult}_P^Q(G):=\det M^Q_P.
$$
Clearly, $\mathrm{mult}^Q_P(G)$ does not depend on the choice of basis
$\Gamma$ but may depend on the choice of basis $(\omega^{1},\dots,\omega^{b})$ of $H^1(S;\mathbb{Z})$.
The following corollary of Theorem~\ref{th-all-minor-matrix-tree} is proved in Sec.~\ref{sec-conclusion}.

\begin{corollary} \label{cor-all-minor-matrix-tree}
    % If %$P, Q \in \{1,\ldots,b\}$ and
    % $P \cap Q=\emptyset$, then t
    The minor of the response matrix of a cohomological network is given by
    $$\det L_P^Q = \frac{\sum_G \mathrm{mult}_P^Q(G) \cdot c(G)}{\sum_T c(T)},$$
    where the sum in the numerator is over all subgraphs $G$ that are valid relative to both $P$ and $Q$ separately, and the sum in the denominator is over all spanning trees $T$.
\end{corollary}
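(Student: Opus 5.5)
The plan is to deduce Corollary~\ref{cor-all-minor-matrix-tree} from Theorem~\ref{th-all-minor-matrix-tree} by passing to a refined circuit and then to the source-synchronized network of Example~\ref{ex:reduc}.

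\emph{Step 1: refinement.} Refine $S$ so that $\omega^j(e_k)\in\{0,+1\}$ and each subdivision edge carries at most one $\omega^j$. Group the edges with $\omega^j(e_k)=+1$ into a synchronized source. By Example~\ref{ex:reduc}, the resulting source-synchronized network $\widetilde S$ has the same currents. Its synchronized-source currents are $I_{\omega^j}=\langle\omega^j,I\rangle$. Hence its response matrix coincides with the cohomological response matrix $L$ written in the basis $\omega^1,\dots,\omega^b$, and so do all minors $\det L_P^Q$.

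\emph{Step 2: both sides are invariant under refinement.} The left side is unchanged by Step~1. For the right side, I would repeat the weight-tracking argument from the proof of Corollary~\ref{cor-cohomological-current}. Subdividing $e$ into $e_1,\dots,e_\ell$ multiplies every spanning-tree weight by $\prod_i c(e_i)/c(e)$. It also multiplies by the same factor the total weight of all refined lifts of any connected spanning subgraph $G$ with $n+k-1$ edges. A lift is either $G$ with $e$ fully subdivided, or, when $e\notin G$, the $\ell$ graphs obtained by omitting exactly one $e_i$. The lifts of $G$ have the same first homology as $G$ and the same restricted cocycles, so validity and the Gram matrix, hence $\mathrm{mult}_P^Q$, are unchanged. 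Conversely, every connected spanning subgraph of the refinement with $H_1$ of rank $k$ in which the pullbacks $\imath^*\omega^p$, $p\in P$, are independent is such a lift. The point is that omitting two pieces of the same edge creates a pendant path of degree-$1$ vertices, which either disconnects the subgraph or changes its edge count. So the right side of the corollary is invariant, and we may work with $\widetilde S$.

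\emph{Step 3: matching the combinatorics on $\widetilde S$.} This is the main obstacle. Theorem~\ref{th-all-minor-matrix-tree} sums over $G$ that contain $k$ edges $E$, one from each $\omega^{p_i}$, such that $G\setminus E$ is a spanning tree. The corollary sums over connected spanning $G$ with $n+k-1$ edges and $\imath^*\omega^{p_1},\dots,\imath^*\omega^{p_k}$ independent. Both classes consist of connected spanning subgraphs with cycle rank $k$, and the Gram matrices agree because $\langle\gamma,\omega^i\rangle$ is the same quantity in both settings. The intended equivalence is the following: for such $G$, there exist edges $e_i\in\omega^{p_i}$ whose removal leaves a tree if and only if the $k\times k$ matrix $\bigl(\omega^{p_i}(\gamma_l)\bigr)$ is nonsingular. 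The reason is that $G\setminus E$ is a tree exactly when the edge-indicator cochains of $E$ restrict to a basis of $H^1(G)$.

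The direction from a valid set $E$ to independence needs care. Nonsingularity of $\bigl(\omega^{p_i}(\gamma_l)\bigr)$ gives, by Cauchy--Binet or Laplace expansion over edges, a transversal $E$ with nonzero minor, which settles the converse direction. The issue is that a valid $E$ alone might still yield dependent classes. However, in that case $\det M_P^Q=0$, since $M_P^Q=A_P A_Q^{\mathrm T}$ with $A_P=\bigl(\omega^{p_i}(\gamma_l)\bigr)$. Such graphs therefore contribute zero to both sums. So I would not prove a full equivalence of validity notions, only that the two sums agree term by term after discarding multiplicity-zero graphs. This gives the corollary.
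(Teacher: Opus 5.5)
Your proposal is correct and follows the paper's proof essentially step by step. You refine to the source-synchronized network of Example~\ref{ex:reduc}, check that both sides are invariant under refinement by the same weight-tracking as in Corollary~\ref{cor-cohomological-current}, and reconcile the two notions of validity by noting that any graph in one index set but not the other has zero multiplicity; this is exactly the content of Remark~\ref{rem:intersection-matrix} and the first paragraph of the paper's proof. You also check that a connected spanning subgraph of the refinement omits at most one piece of each subdivided edge, a point the paper leaves implicit. One small correction: omitting two pieces cuts off the intermediate subdivision vertices as a separate component rather than leaving a pendant path, though your conclusion is unaffected.
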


\subsection{Overview of the proof} \label{sec:over}
For the proof of Theorem~\ref{th-all-minor-matrix-tree}, %a direct application of
%Kirchhoff's formula for $L^j_i$
the formula from Proposition~\ref{prop-response} is helpless due to huge cancellations. Therefore, we employ tools from statistical physics:
%For the proof of Theorem~\ref{th-all-minor-matrix-tree}, we use tools from statistical physics:
\begin{itemize}
    \item a version of \emph{Curtis--Morrow's connection decomposition} for response matrix minors;
    \item a version of \emph{Smirnov's parafermionic observable} for the uniform spanning tree model;
    \item modified \emph{Temperley's correspondence} between spanning trees and dimer configurations.
\end{itemize}
The parafermionic observable encodes cancellations, while the Temperley correspondence translates the problem into dimer configurations where these cancellations can be analyzed explicitly.

The proof proceeds by rewriting the minor $\det L_P^Q$ in several equivalent forms %\mscommnew{remove hat?}
\[
\begin{aligned} \det L_P^Q  \cdot \sum_T c(T)&\overset{(1)}{=}
\sum_{\alpha} \mathrm{sgn}_P^Q(\alpha)c(\overline\alpha)\det K(S\setminus\overline\alpha)_{ \mathrm{V}\setminus \alpha}^{ \mathrm{V}\setminus \alpha} \\
&\overset{(2)}{=}
{F_P^Q} \\
% &=
%  \sum_{A,B}\sum_{H\text{ \textrm{valid}}} \mathrm{sgn}_P^Q (\alpha(H)) c(H)/{\det K^\mathrm{V}_\mathrm{V}}\\
&\overset{(3)}{=}
\sum_{A, B: a_i \in \omega^{p_i}, b_i \in \omega^{q_i}} \sum_{G} \mathrm{sgn}_P^Q(\alpha(\vec D)) c(G) \\
% &=\sum_G \left( \sum_A \mathrm{sgn}_P(A, \Gamma) \right) \left( \sum_B \mathrm{sgn}^Q(B, \Gamma) \right)/ {\det K^\mathrm{V}_\mathrm{V}} \\
&\overset{(4)}{=} \sum_G \mathrm{mult}_P^Q(G) \cdot c(G).
\end{aligned}
\]

In the first equality, we express the minor as a sum over all
\emph{connections} $\alpha$ between $P$ and $Q$ whose paths do not
pass through the last vertex \(v_n\), as introduced in
Sec.~\ref{sec-connection-decomposition}. Each summand %connection contributes with a \emph{sign} $\mathrm{sgn}_P^Q(\alpha)$, and the contribution
%The contribution of each connection
is the \emph{conductance} $c(\overline\alpha)$ with a
\emph{sign} $\mathrm{sgn}_P^Q(\alpha)$, multiplied by a minor of the augmented Kirchhoff matrix
$K(S\setminus\overline\alpha)$ %\mscommnew{remove hat?}
of a certain sub-network of our network $S$.
This identity is  a version of the \emph{Curtis--Morrow connection decomposition}
(Lemma~\ref{l-connections}).

In the second equality, $F_P^Q$ denotes the \emph{parafermionic observable} for the uniform spanning-tree model defined in Sec.~\ref{sec-parafermionic}. This identity is proved there in Lemma~\ref{connection_F_and_L}.

% where \(A\) and \(B\) are ordered sets of midpoints such that \(|A|=|B|=|P|=|Q|\), and \(a_i\) (respectively, \(b_i\)) denotes the edge whose midpoint is the \(i\)-th element of \(A\) (respectively, \(B\))
In the third equality, we rewrite the parafermionic observable as a sum over two ordered sets of edge midpoints $A$ and $B$ such that $|A|=|B|=|P|=|Q|$. Here, $a_i$ (respectively, $b_i$) denotes the edge whose midpoint is the $i$-th element of $A$ (respectively, $B$). The inner sum is taken over subgraphs $G$ that are valid relative to both $A$ and $B$.
Informally, $G$ can be viewed as the subgraph that supports the paths connecting %the sources in
$A$ and %to the sinks in
$B$. To the subgraph $G$, we assign a canonical \emph{double-dimer configuration} $\vec D$ on a subdivision of $G$.
%The object $\vec D$ denotes a \emph{double-dimer configuration} corresponding to $G$, obtained as the superposition of two oriented dimer configurations on a subdivision of $G$.
Such a configuration naturally encodes a connection %collection
$\alpha(\vec D)$ %of oriented paths
between $A$ and $B$. %, which we denote by $\alpha(\vec D)$.
The identity then follows from \emph{modified Temperley's correspondence} (Lemma~\ref{lemma:bijection}), established in Sec.~\ref{sec-Temperley}.

The final equality is the computation of the sign $\mathrm{sgn}_P^Q(\alpha(\vec D))$ performed in Sec.~\ref{sec-conclusion}.

% Here $\hat{K}(G\setminus\partial\alpha)$ is discussed in Section~\ref{sec-connection-decomposition}; $F_P^Q$ denotes a version of \emph{Smirnov's parafermionic observable} defined in Section~\ref{sec-parafermionic}.

\section{Connection decomposition}
\label{sec-connection-decomposition}

For the proof of Theorem~\ref{th-all-minor-matrix-tree}, we are going to derive several auxiliary results of independent interest. In this section, we construct a %n analog of
connection decomposition for the response-matrix minors of a source-synchronized network $S$. %Throughout this subsection,
We use the notation from Secs.~\ref{sec-networks}--\ref{sec-all-minor}.

First, we need to recall some standard notions.
By an (oriented) \emph{path} in the network, we mean a finite sequence $u_1,\vec e_1,u_2,\dots,\vec e_\ell,u_{\ell+1}$, where $u_1,\dots,u_{\ell+1}$ are vertices, $\vec e_1,\dots,\vec e_{\ell}$ are edges possibly with reversed direction, and $\vec e_j$ starts at $u_j$ and ends at $u_{j+1}$ for each $j=1,\dots,\ell$.

The \emph{subdivision of an edge $xy$} %of a network
means adding a new vertex $z$ (called the \emph{midpoint} of $xy$) and replacing $xy$ with two edges $xz$ and $zy$. %, each with the same conductance as the edge~$xy$. Note that this convention on the conductance is different from the one for the refined circuits in Secs~\ref{ssec-networks on surfaces}--\ref{ssec-cohomological networks}.]}
%notion of subdivision is different from the operation used in Section~\ref{ssec-networks on surfaces}, where edges were split to obtain a refined circuit.
% \mscomm{This is different from the subdivision in Section~\ref{ssec-networks on surfaces}!} %For convenience, we will continue to refer to the new vertex $z$ as a \emph{midpoint}, even though it is now a vertex in the subdivided graph.
%
The \emph{suppression} of a midpoint \( z \) is the inverse operation, that is, replacing the edges \( xz \) and \( zy \) %of the same conductance
with a single edge \( xy \) %of the same conductance
and removing the vertex~\( z \).

For a subgraph $G$, %of the network $S$,
denote by $G'$ the graph obtained from $G$ by subdivision of all edges.
Henceforth, for %the sake of
brevity, we refer to the vertices of the graph \( G \) simply as \emph{vertices}, and the remaining vertices of the graph \( G' \) are called \emph{midpoints}. The edges of~\( G' \) are called \emph{half-edges}.
%The term \emph{half-edges} refers to the edges of the graph \( G' \).

Let $A$ and $B$ be two ordered sets of midpoints such that $|A| = |B| =k$.
%Each midpoint lies on a unique edge of $G$, so we identify a set of midpoints with the corresponding set of edges. In particular, $G\setminus A$ denotes the graph obtained from $G$ by deleting the edges in $A$, and $G'\setminus A=(G\setminus A)'$.
Denote by $G\setminus A$ and $G'\setminus A$
the graphs obtained from $G$ and $G'$ by removing $A$ and all the edges and half-edges, respectively, having common points with $A$.
Denote by \( G_A^B \) the graph obtained from \( G \) by subdivision of the edges with midpoints in \( A \cup B \).

\mscommnew{For a subgraph $H\subset G_A^B$ containing $A\cup B$, its \emph{completion} is the subgraph $\overline H\subset G$ obtained by adding all missing half-edges %incident to
emanating from $A\cup B$ %, together with their endpoints,
and suppressing the midpoints in $A\cup B$.}

A \textit{connection} from $A$ to $B$ in $G$ is a collection $ \alpha = \{\alpha_1, \ldots, \alpha_{|A|} \}$ of disjoint simple oriented paths in $G_A^B$, each starting in $A$ and ending in $B$.
%\mscommnew{[Removed: and not passing through the last vertex $v_n$]}
Vertices in $A \cap B$ appear as trivial one-vertex paths. (A connection may not ``respect'' the order of midpoints in $A$ and~$B$.) See Fig.~\ref{fig:auxiliary-graphs}.

% Now we introduce a new notion of the \emph{sign} $\mathrm{sgn}_P^Q(\alpha)$ of a connection $\alpha$.

Now we introduce a new notion of the \emph{sign} $\mathrm{sgn}_A^B(\alpha)$ of a connection $\alpha$.

Let \( (a_1, \ldots, a_{|A|}) \) be the edges with midpoints in \( A \), and let \( (b_1, \ldots, b_{|B|}) \) be the edges with midpoints in \( B \). Define \( \tau \in \mathcal{S}_{|A|} \) as the unique permutation such that, for each \( i =1,\dots,|A|\), there is a path in the connection \( \alpha \) connecting the midpoint of \( a_i \) to the midpoint of \( b_{\tau(i)} \). Denote by $\vec{a_i}$ and $\vec{b}_{i}$ the edges $a_i$ and $b_{i}$ oriented consistently with their paths in \(\alpha\); if \(a_i=b_{\tau(i)}\), orient \(\vec a_i\) and \(\vec b_{\tau(i)}\) in the same direction as \(a_i\).
Define the \emph{sign} of the connection $\alpha$ by the formula
\begin{equation}\label{eq-connection-sign}
    \mathrm{sgn}_A^B(\alpha):=\mathrm{sgn}(\tau)\prod_{i=1}^{|A|}\langle \vec{a}_i, a_i \rangle \langle \vec{b_i}, b_i\rangle,
\end{equation}
where $\langle \vec{e}, e \rangle=+1$ if the directions of edges $\vec{e}$ and $e$ agree, and $\langle \vec{e}, e \rangle=-1$, otherwise; see~\eqref{eq-bracket-cycle-edge}.

An ordered set of midpoints \(A\) is \emph{consistent} with \(P\) if $|A|=|P|$ and for each \(i=1,\dots,|P|\), letting \(a_i\) denote the edge whose midpoint is the \(i\)-th element of \(A\), we have
$
a_i \in \omega^{p_i}.
$

Whenever \(A\) and \(B\) are consistent with \(P\) and \(Q\), respectively, we say that a connection from \(A\) to \(B\) is a \emph{connection} from \(P\) to \(Q\). We define
$
\mathrm{sgn}_P^Q(\alpha):=\mathrm{sgn}_A^B(\alpha).
$
% Denote by $\mathcal{C}_P^Q(S)$ the set of all connections in $S$ from $P$ to $Q$
% whose paths do not pass through the last vertex $v_n$.

For a connection \( \alpha\), %\( \alpha \in \mathcal{C}(G) \),
denote (see Fig.~\ref{fig:auxiliary-graphs}):
\begin{itemize}
    %\item $c(\alpha)$ is the product of the conductances of all edges and half-edges in $\alpha$; a trivial path contributes $1$.
    %in the connection \(\alpha\), including the ones containing the endpoints of paths in \(\alpha\) (even if the endpoints of a path coincide). %
    \item $S\setminus\overline\alpha$ is the network obtained from $S$ by deleting all edges of the completion $\overline\alpha$.
    \item $K(S\setminus\overline\alpha)$ is the augmented Kirchhoff
    matrix of the network $S\setminus\overline\alpha$.
    %obtained by deleting all edges of $\overline\alpha$.
    %        that contain the endpoints of the paths of \( \alpha \).
    %See Fig.~\ref{fig:auxiliary-graphs}.
    \item \(\mathrm{ V}\setminus \alpha\) is the set of vertices of $S$ not contained in \(\alpha\) and distinct from the last vertex $v_n$.
    %subset \(\mathrm{V}\) without the vertices contained in \(\alpha\).
\end{itemize}

\begin{figure}[ht]
\centering
\scalebox{0.85}{%
\begin{circuitikz}[x=1cm,y=1cm,
    selected/.style={circle,fill=black,draw=none,inner sep=0pt,minimum size=8pt},
    base/.style={black,line width=0.7pt},
    directed/.style={base,-{Latex[length=1.7mm]},shorten >=2pt},
    connection/.style={purple,line width=1.5pt,-{Latex[length=2mm]}}]

\begin{scope}

\coordinate (U) at (0,2.5);
\coordinate (R) at (2.4,0.8);
\coordinate (N) at (1.5,-2);
\coordinate (Vfour) at (-1.5,-2);
\coordinate (Vfive) at (-2.4,0.8);
\coordinate (Vsix) at (2.4,2.5);
\coordinate (Vseven) at (1.2,3.5);
\coordinate (C) at (0,0);
\coordinate (Atwo) at (1.2,0.4);
\coordinate (Bone) at (0.6,3);
\coordinate (Btwo) at (-0.75,-1);
\coordinate (Aone) at (-1.2,0.4);

\draw[directed] (U) -- (R);
\draw[directed] (R) -- (Vsix);
\draw[directed] (Vsix) -- (U);
\draw[directed] (Vsix) -- (Vseven);
\draw[directed] (R) -- (N);
\draw[directed] (N) -- (Vfour);
\draw[directed] (Vfour) -- (Vfive);
\draw[directed] (Vfive) -- (U);
\draw[directed] (C) -- (U);
\draw[directed] (C) -- (N);
\draw[base] (U) -- (Vseven);
\draw[base] (R) -- (C);
\draw[base] (C) -- (Vfour);
\draw[base] (C) -- (Vfive);

\draw[directed] (Atwo)--(C);
\draw[directed] (Bone)--(Vseven);
\draw[directed] (Btwo)--(Vfour);
\draw[directed] (Aone)--(Vfive);
\draw[connection] (Atwo)--(R);
\draw[connection] (R)--(U);
\draw[connection] (U)--(Bone);
\draw[connection] (Aone)--(C);
\draw[connection] (C)--(Btwo);

\fill (U) circle (2pt);
\fill (R) circle (2pt);
\fill (N) circle (2pt);
\fill (Vfour) circle (2pt);
\fill (Vfive) circle (2pt);
\fill (Vsix) circle (2pt);
\fill (Vseven) circle (2pt);
\fill (C) circle (2pt);

\foreach \p in {Aone,Atwo} \fill[red] (\p) circle (2.5pt);
\foreach \p in {Bone,Btwo} \fill[blue] (\p) circle (2.5pt);
\node[above=3pt,red] at (Aone) {$A_1$};
\node[below=3pt,red] at (Aone) {$-$};
\node[below right=2pt,red] at (Atwo) {$A_2$};
\node[above left=2pt,red] at (Atwo) {$-$};
\node[above left=2pt,blue] at (Bone) {$B_1$};
\node[right=3pt,blue] at (Bone) {$+$};
\node[below right=2pt,blue] at (Btwo) {$B_2$};
\node[above left=2pt,blue] at (Btwo) {$+$};
\node[right=4pt] at (N) {$v_n$};
\node[purple] at (1.3,2.05) {$\alpha$};
\node at (0,-2.95) {$S$};
\node at (0,-3.8) {$\mathrm{sgn}_A^B(\alpha)=-1$};
\end{scope}

\begin{scope}[shift={(6.1,0)}]

\coordinate (U) at (0,2.5);
\coordinate (R) at (2.4,0.8);
\coordinate (N) at (1.5,-2);
\coordinate (Vfour) at (-1.5,-2);
\coordinate (Vfive) at (-2.4,0.8);
\coordinate (Vsix) at (2.4,2.5);
\coordinate (Vseven) at (1.2,3.5);
\coordinate (C) at (0,0);
\coordinate (Atwo) at (1.2,0.4);
\coordinate (Bone) at (0.6,3);
\coordinate (Btwo) at (-0.75,-1);
\coordinate (Aone) at (-1.2,0.4);

\draw[base] (R) -- (Vsix);
\draw[base] (Vsix) -- (U);
\draw[base] (Vsix) -- (Vseven);
\draw[base] (R) -- (N);
\draw[base] (N) -- (Vfour);
\draw[base] (Vfour) -- (Vfive);
\draw[base] (Vfive) -- (U);
\draw[base] (C) -- (U);
\draw[base] (C) -- (N);
\fill (U) circle (2pt);
\fill (R) circle (2pt);
\fill (N) circle (2pt);
\fill (Vfour) circle (2pt);
\fill (Vfive) circle (2pt);
\fill (Vsix) circle (2pt);
\fill (Vseven) circle (2pt);
\fill (C) circle (2pt);

\foreach \p in {Vfour,Vfive,Vsix,Vseven}
    \node[selected] at (\p) {};

\node[right=4pt] at (N) {$v_n$};
\node at (0,-2.95) {$S\setminus\overline\alpha$};
\node[selected] at (-0.7,-3.8) {};
\node[right=5pt] at (-0.7,-3.8) {$\mathrm V\setminus\alpha$};
\end{scope}

\begin{scope}[shift={(12.2,0)}]

\coordinate (U) at (0,2.5);
\coordinate (R) at (2.4,0.8);
\coordinate (N) at (1.5,-2);
\coordinate (Vfour) at (-1.5,-2);
\coordinate (Vfive) at (-2.4,0.8);
\coordinate (Vsix) at (2.4,2.5);
\coordinate (Vseven) at (1.2,3.5);
\coordinate (C) at (0,0);
\coordinate (Atwo) at (1.2,0.4);
\coordinate (Bone) at (0.6,3);
\coordinate (Btwo) at (-0.75,-1);
\coordinate (Aone) at (-1.2,0.4);

\draw[base] (N) -- (Vfive);
\draw[base] (Vfour) -- (Vfive);
\draw[base] (N) -- (Vfour);
\draw[base] (Vsix) -- (Vseven);

\draw[base,bend left=30] (N) to (Vsix);
\draw[base,bend right=30] (N) to (Vsix);

\fill (N) circle (2pt);
\fill (Vfour) circle (2pt);
\fill (Vfive) circle (2pt);
\fill (Vsix) circle (2pt);
\fill (Vseven) circle (2pt);

\foreach \p in {Vfour,Vfive,Vsix,Vseven}
    \node[selected] at (\p) {};

\node[right=4pt] at (N) {$[v_n]$};
\node at (0,-2.95) {$S^\alpha=(S\setminus\overline\alpha)/(\alpha\cup\{v_n\})$};
\node[selected] at (-0.7,-3.8) {};
\node[right=5pt] at (-0.7,-3.8) {$\mathrm V\setminus\alpha$};
\end{scope}
\end{circuitikz}
}
\caption{
Left: A network $S$ (black) and a connection $\alpha$ (purple)
from $A=(A_1,A_2)$ to $B=(B_1,B_2)$.
Black and purple arrows indicate edge and path directions,
respectively; the signs at the midpoints indicate whether these
directions agree.
\mscommnew{Those signs multiply to $+1$,} while the endpoint permutation induced by $\alpha$ has sign $-1$,
%The endpoint permutation has sign $-1$, while the orientation factors have product $+1$,
so $\mathrm{sgn}_A^B(\alpha)=-1$.
Middle: $S\setminus\overline\alpha$, obtained by deleting all edges
of $\overline\alpha$; larger dots mark $\mathrm V\setminus\alpha$.
Right: The auxiliary graph $S^\alpha$.
See Sec.~\ref{sec-connection-decomposition}.}
 \label{fig:auxiliary-graphs}
\end{figure}

\begin{lemma}[Connections decomposition] \label{l-connections}
%Consider a source-synchronized network $S$, the response matrix \( L \), the Kirchhoff matrix \( \hat K \), and ordered subsets $P, Q \subset \{1,\ldots,b\}$. Then
For any source-synchronized network $S$ and subsets $P,Q\subset\{1,\ldots,b\}$ with $|P|=|Q|$, we have

\begin{equation}\label{eq-connections}
\det L_P^Q \cdot \det K^{ \mathrm{V}}_{ \mathrm{V}} =
\sum_{
\alpha}\mathrm{sgn}_P^Q(\alpha)c(\overline\alpha)
\det K(S\setminus\overline\alpha)_{\mathrm V\setminus\alpha}^{\mathrm V\setminus\alpha},
\end{equation}
where the sum is over all connections $\alpha$ in $S$ from $P$ to $Q$ whose paths do not pass through the last vertex $v_n$.
\end{lemma}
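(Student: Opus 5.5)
By Lemma~\ref{schur_complement}, $L$ is the Schur complement of $D=K_{\mathrm V}^{\mathrm V}$ in $K$. The first step is the standard Schur-complement identity for minors:
$$
\det L_P^Q\cdot\det K_{\mathrm V}^{\mathrm V}=\det K_{P\cup \mathrm V}^{Q\cup \mathrm V}.
$$
Here the rows are labelled by $\omega^{p_1},\dots,\omega^{p_k},\delta^1,\dots,\delta^{n-1}$ and the columns by $\omega^{q_1},\dots,\omega^{q_k},\delta^1,\dots,\delta^{n-1}$. The proof is the block factorization
$$
\begin{pmatrix}A_P^Q&B_P\\ (B^{\mathrm T})^Q&D\end{pmatrix}=\begin{pmatrix}1&B_PD^{-1}\\0&1\end{pmatrix}\begin{pmatrix}L_P^Q&0\\(B^{\mathrm T})^Q&D\end{pmatrix}.
$$
So it suffices to expand the single determinant $\det K_{P\cup\mathrm V}^{Q\cup\mathrm V}$ combinatorially.

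Next I use the unified formula $\hat K_\lambda^\rho=\sum_e\langle\lambda,e\rangle\langle\rho,e\rangle c(e)$ and expand the determinant multilinearly. Each row $\lambda$ chooses an edge $e$ and a column $\rho$, with weight $\langle\lambda,e\rangle\langle\rho,e\rangle c(e)$. A term is then a bijection from rows to columns with an edge attached to each pair. Row $\lambda$ and column $\rho$ can be matched through $e$ only if both sources contain $e$. For trivial sources this means $e$ is incident to both vertices; for $\omega^p$ it means $e\in\omega^p$.

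Now I group the terms by the rank-one observation of Example~\ref{ex-rank-one}. The matrix $c(e)\langle\cdot,e\rangle\langle\cdot,e\rangle^{\mathrm T}$ has rank one, so after fixing all other choices, terms in which one edge $e$ is used by two or more row/column pairs cancel. Hence only terms with pairwise distinct edges survive. Each surviving term is a functional digraph: every row label points to a column label via its own edge. I decompose it into cycles and paths. The chains starting at a row $\omega^{p_i}$, running through trivial sources $\delta^j$, and ending at a column $\omega^{q_{\tau(i)}}$ become, after reading them in $S_A^B$ with midpoints $A\ni a_i\in\omega^{p_i}$ and $B$, exactly the disjoint paths of a connection $\alpha$ from $P$ to $Q$. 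These paths avoid $v_n$, since $\delta^n$ was deleted. If $p_i=q_{\tau(i)}$ uses the same edge, I get a trivial path.

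The remaining matched pairs involve only the trivial sources $\delta^j$ with $v_j\in\mathrm V\setminus\alpha$ and edges outside $\overline\alpha$. Summing over these reproduces $\det K(S\setminus\overline\alpha)^{\mathrm V\setminus\alpha}_{\mathrm V\setminus\alpha}$. This works because deleting the edges of $\overline\alpha$ is exactly what the distinct-edge condition enforces, and edges incident to vertices of $\alpha$ only affect the diagonal entries at those removed vertices. The factors $c(e)$ on the connection edges give $c(\overline\alpha)$. The pair of half-edges at $a_i$ or $b_i$ carries the weight of the full edge once.

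The main obstacle is the sign bookkeeping. I must show that $\mathrm{sgn}$ of the permutation, times the product of the incidences $\langle\lambda,e\rangle\langle\rho,e\rangle$ along each path, equals $\mathrm{sgn}_P^Q(\alpha)$ from~\eqref{eq-connection-sign}. Inside a path $\delta^{j}\to\delta^{j'}$ along an edge $e$, the product $\langle\delta^j,e\rangle\langle\delta^{j'},e\rangle=-1$, independently of the orientation of $e$. Along a path, the cyclic shift of the labels $\delta$ contributes a sign that cancels these $-1$'s. This is the same as in Curtis--Morrow's computation, so I would follow their argument for that part. At the ends, $\langle\omega^{p_i},a_i\rangle\langle\delta^j,a_i\rangle$ records whether $a_i$ points away from the path start, which gives the factor $\langle\vec a_i,a_i\rangle$ (and similarly $\langle\vec b_i,b_i\rangle$). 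The leftover permutation of the $\omega$-labels is $\tau$.

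I would verify the conventions on the example in Fig.~\ref{fig:auxiliary-graphs}. I would also double-check the rank-one cancellation when an edge serves both as an endpoint edge and as an internal edge, which is where the restriction to distinct edges matters.
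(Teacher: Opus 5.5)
Your proposal is correct and follows the paper's proof essentially step by step. The shared steps are: the Schur identity $\det L_P^Q\det K^{\mathrm V}_{\mathrm V}=\det K^{Q\cup\mathrm V}_{P\cup\mathrm V}$; a multilinear expansion in which terms with a repeated edge cancel by the rank-one (proportional rows/columns) argument; reading each generalized diagonal as a connection $\alpha$ together with a permutation of $\mathrm V\setminus\alpha$; and re-adding the vanishing repeated-edge terms to reassemble $\det K(S\setminus\overline\alpha)^{\mathrm V\setminus\alpha}_{\mathrm V\setminus\alpha}$. The paper attaches edges to columns rather than rows, which is immaterial.

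One correction to your sign sketch concerns a nontrivial path through $\ell_i$ vertices:
\begin{itemize}
\item the first-edge factor is $\langle\delta^{j_1},a_i\rangle=-\langle\vec a_i,a_i\rangle$, not $+\langle\vec a_i,a_i\rangle$;
\item there are only $\ell_i-1$ internal factors $-1$, while the permutation contributes $\mathrm{sgn}(\tau)\prod_i(-1)^{\ell_i}$.
\end{itemize}
These two off-by-one discrepancies cancel, so the product is exactly $\mathrm{sgn}_P^Q(\alpha)$ as you claimed.
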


%Now we are ready to prove Lemma \ref{l-connections}.
The proof %closely resembles that of
follows \cite[Lemma 3.12]{CurtisMorrow}, but has an additional %combinatorial
cancellation step.

\begin{proof} %\mscommnew{[A simplified proof. Pasha, will you check?]}
The lemma follows from the following chain of equalities to be explained below:
\begin{equation*}%\label{eq-connections-beta}
\det L_P^Q \det K^{\mathrm{V}}_{\mathrm{V}}
\overset{(1)}{=} \det K^{Q \cup  \mathrm{V}}_{P \cup  \mathrm{V}}
\overset{(2)}{=} \sum_{e_{\rho^1},\dots,e_{\rho^m}}\det K(e_{\rho^1},\dots,e_{\rho^m})
\overset{(3)}{=}\sum_\alpha\mathrm{sgn}_P^Q(\alpha)c(\overline\alpha)
\det K(S\setminus\overline\alpha)_{\mathrm V\setminus\alpha}^{\mathrm V\setminus\alpha}.
\end{equation*}

Let us introduce the notation and the proof for each equality (1)--(3).

In the first equality, \( K^{Q \cup \mathrm{V}}_{P \cup \mathrm{V}} \) denotes the minor of \( K \) formed by %selecting
the rows labeled by sources \( \omega^p \) for \( p \in P \) and by trivial sources \( \delta^j \) for \( j \in \mathrm{V} \), and similarly for the columns. %Recall that the elements of $P$ and $Q$ are listed in increasing order (Section~\ref{sec-all-minor}), so that $K^{Q\cup\mathrm{V}}_{P\cup\mathrm{V}}$ is indeed a minor of~$K$.
Equality~(1) then follows from Lemma~\ref{schur_complement} and Schur's formula. %the property of Schur's complement.

In the second equality, the sum is over all collections of distinct edges $e_{\rho^1},\dots,e_{\rho^m}$, indexed by sources \( \omega^q \) for \( q \in Q \) and by trivial sources \( \delta^j \) for \( j \in \mathrm{V} \),
so that $m$ is the number of elements in $Q \cup \mathrm{V}$,
and $K(e_{\rho^1},\dots,e_{\rho^m})$ is the $m\times m$ matrix with the entries
$$
K(e_{\rho^1},\dots,e_{\rho^m})^{\rho^i}_\nu:=\langle {\rho^i},e_{\rho^i}\rangle \langle \nu,e_{\rho^i}\rangle c(e_{\rho^i}).
$$
If two of the edges $e_{\rho^1},\dots,e_{\rho^m}$ coincide, then this matrix has two proportional columns. Thus, equality~(2) follows from the multi-linearity of the determinant.

In the third equality, the sum is over all connections $\alpha$ between $P$ and $Q$, whose paths do not pass through the last vertex $v_n$. They arise naturally in the expansion of the determinant $\det K(e_{\rho^1},\dots,e_{\rho^m})$. Indeed, as we expand the determinant, we pick entries forming a generalized diagonal. Start with the row of the matrix $K(e_{\rho^1},\dots,e_{\rho^m})$ labeled by a synchronized source $\omega^p$ for some $p \in P$.
Pick an entry in this row: $$K(e_{\rho^1},\dots,e_{\rho^m})_{\omega^p}^{\rho^i}=\langle\omega^p,e_{\rho^i}\rangle\langle{\rho^i},e_{\rho^i}\rangle c(e_{\rho^i}).$$ This entry is nonzero only if the edge $e_0:=e_{\rho^i}$ belongs to $\omega^p$ and $\langle\rho^i,e_0\rangle\ne 0$.
Let us grow a path from the midpoint of $e_0$.
If $\rho^i=\omega^q$ for some $q\in Q$, so that $e_0\in \omega^q$ (this is possible if $p=q\in P\cap Q$), then the path immediately terminates at the midpoint of $e_0$.
Otherwise, $\rho^i=\delta^j$ is some trivial source, minus coboundary of a vertex $v_j$. Set $v_j$ to be the next vertex of our growing path. Continue with the row labeled by $\delta^j$, and pick an entry in that row: % that contributes to the determinant:
$$
K(e_{\rho^1},\dots,e_{\rho^m})_{\delta^j}^{\rho^l}=\langle\delta^j, e_{\rho^l}\rangle \langle {\rho^l},e_{\rho^l}\rangle c(e_{\rho^l}).
$$
The entry is nonzero only if the edge $e_1:=e_{\rho^l}$ is contained in $\delta^j$ and the next (possibly trivial) source $\rho^l$, etc. This will continue as long as we get columns labeled by trivial sources $\rho^l$, and it will stop as soon as we hit a nontrivial source $\rho^l=\omega^{q}$ for $q\in Q$.
This builds a path of a connection $\alpha$. The non-repetition of vertices follows from the generalized-diagonal condition: each row/column label is used at most once.  The non-repetition of edges follows because $e_{\rho^1},\dots,e_{\rho^m}$ are all distinct (otherwise, a path starting at the midpoint of $e_0$ could make a ``U-turn'' at the vertex $v_j$ and traverse the same edge $e_0$ again).

Let us compute the product of the selected entries. Denote by $\delta^{j_1},\dots,\delta^{j_\ell}$ and $e_0,e_1,\dots,e_\ell$ the trivial sources and edges of the path, respectively.
Let $\vec e_k$ denote
$e_k$ oriented along the path. Here, if $\ell=0$,
then the path has no intermediate trivial sources and consists of a single midpoint,
%there are no intermediate trivial sources and the path %consists of one edge $e_0=e_\ell$,
and we set $\vec e_0:=e_0$ for definiteness. The selected entries contribute the product %of pairings
$$
\langle\omega^{p},e_0\rangle\,
\langle\delta^{j_1},e_0\rangle\langle\delta^{j_1},e_1\rangle\,
\langle\delta^{j_2},e_1\rangle\langle\delta^{j_2},e_2\rangle\cdots
\langle\delta^{j_\ell},e_{\ell-1}\rangle\langle\delta^{j_\ell},e_\ell\rangle\,
\langle\omega^{q},e_\ell\rangle c(e_0)\dots c(e_\ell).
$$
At an intermediate vertex $v_{j_k}$, the path enters along $e_{k-1}$ and leaves along
$e_k$, so that %; the definition of $\delta^{j_k}$ then gives
$$
\langle\delta^{j_k},e_{k-1}\rangle=-\langle\vec e_{k-1},e_{k-1}\rangle,
\qquad
\langle\delta^{j_k},e_k\rangle=\langle\vec e_k,e_k\rangle,
$$
so $v_{j_k}$ contributes $-\langle\vec e_{k-1},e_{k-1}\rangle\langle\vec e_k,e_k\rangle$.
Since $\langle\vec e_k,e_k\rangle^2=1$ for $1\le k\le\ell-1$, in the product over
$k=1,\dots,\ell$, middle factors cancel out, and the whole pairing product equals
$$
(-1)^{\ell}\,\langle\omega^{p},e_0\rangle\,\langle\vec e_0,e_0\rangle\,
\langle\vec e_\ell,e_\ell\rangle\,\langle\omega^{q},e_\ell\rangle
=(-1)^{\ell}\,\langle\vec e_0,e_0\rangle\,\langle\vec e_\ell,e_\ell\rangle.
%=(-1)^{\ell}\,\langle\vec a,a\rangle\langle\vec b,b\rangle,
$$
%where $a=e_0$ and $b=e_\ell$ are the first and last edges of the path and
%$\langle\omega^{p},e_0\rangle=\langle\omega^{q},e_\ell\rangle=1$.
So, the product of the selected entries for all the paths is
$$
\prod_i(-1)^{\ell_i}\langle\vec a_i,a_i\rangle\langle\vec b_i,b_i\rangle c(\overline\alpha),
$$
where $\ell_i$ is the number of vertices on the $i$-th path, and $a_i$ and $b_i$ are its first and last edges.

Let us compute the sign of this product in the determinant.
%On the other hand,
The chosen generalized diagonal is a permutation $\sigma$ of
the index set $P\cup \mathrm{V}$ after identifying the $i$-th source row $\omega^{p_i}$
with the $i$-th sink column $\omega^{q_i}$ (and each $\delta^j$ with itself).
Let $\mathrm{V}\cap\alpha$ be the set of vertices used by the paths of $\alpha$. By construction,
$\sigma(P\cup (\mathrm{V}\cap\alpha))\subset P\cup (\mathrm{V}\cap\alpha)$, hence
$\sigma(\mathrm{V}\setminus\alpha)=\mathrm{V}\setminus\alpha$ and
$$
\operatorname{sgn}(\sigma)
=\operatorname{sgn}(\sigma|_{P\cup (\mathrm{V}\cap\alpha)})
 \operatorname{sgn}(\sigma|_{\mathrm{V}\setminus \alpha}).
$$
Summed over all permutations of $\mathrm{V}\setminus \alpha$, the second factor, together
with the corresponding entries, gives %the minor
$\det K(e_{\rho^1},\dots,e_{\rho^m})^{\mathrm{V}\setminus \alpha}_{\mathrm{V}\setminus \alpha}$. For the first factor, the cycles of
$\sigma|_{P\cup (\mathrm{V}\cap\alpha)}$ are obtained by gluing the paths of $\alpha$ according to
the source-to-sink permutation $\tau$: if $i_1\mapsto\cdots\mapsto i_r\mapsto i_1$ is a cycle of
$\tau$, the corresponding $r$ paths form a single cycle in $\sigma$ of length
$r+\sum_s\ell_{i_s}$ and sign $(-1)^{r-1}(-1)^{\sum_s\ell_{i_s}}$.
%where $\ell_i$ denotes the number of vertices on the $i$-th path.
Then
%$$
\begin{equation*}%\label{eq-tmp}
\operatorname{sgn}(\sigma|_{P\cup (\mathrm{V}\cap\alpha)})
=\operatorname{sgn}(\tau)\prod_i(-1)^{\ell_i}.
\end{equation*}
The factors $(-1)^{\ell_i}$ cancel the factors from the telescoping computation
above, so the paths of $\alpha$ contribute the sign
$$
\operatorname{sgn}(\tau)\prod_i\langle\vec a_i,a_i\rangle\langle\vec b_i,b_i\rangle
=\mathrm{sgn}_P^Q(\alpha)
$$
by%the definition of the sign of $\alpha$. %
~\eqref{eq-connection-sign}. %, where $a_i$ is the first edge of $i$-th path, and $b_i$ is its last edge.
Thus, the generalized diagonals yielding the same connection $\alpha$ contribute %the value
$$\mathrm{sgn}_P^Q(\alpha)\,c(\overline\alpha)\det K(e_{\rho^1},\dots,e_{\rho^m})^{\mathrm{V}\setminus \alpha}_{\mathrm{V}\setminus \alpha}.
$$

The connection $\alpha$ determines the edges of $\overline\alpha$ among $e_{\rho^1},\dots,e_{\rho^m}$. We still need to sum over the remaining edges. By multi-linearity, the resulting sum of minors $\det K(e_{\rho^1},\dots,e_{\rho^m})^{\mathrm{V}\setminus \alpha}_{\mathrm{V}\setminus \alpha}$
gives exactly $\det K(S\setminus\overline\alpha)^{\mathrm V\setminus\alpha}_{\mathrm V\setminus\alpha}$, because
%none of the remaining edges can contain the points of $\partial \alpha$, the edges in $\alpha$ do not contribute to those minors anyway, and allowing repeated edges does not change the sum, since
any repeated-edge term has two proportional columns and vanishes. This proves~(3).
\end{proof}

\section{Parafermionic observable}
\label{sec-parafermionic}

In this section, we continue the proof of Theorem~\ref{th-all-minor-matrix-tree} and introduce a new key tool: a parafermionic observable for the (weighted) uniform spanning-tree model. %\mscomm{Illustrate with a figure!}

\begin{figure}[ht]
\centering
\begin{circuitikz}[
    x=1cm,y=1cm,
    forest/.style={purple,line width=1.5pt},
    context/.style={black,line width=0.8pt}]
\begin{scope}[shift={(0,3.5)}]

\coordinate (N) at (0,2);
\coordinate (L) at (0,0);
\coordinate (TL) at (2,2);
\coordinate (J) at (2,0);
\coordinate (TR) at (4,2);
\coordinate (BR) at (4,0);
\coordinate (Aone) at (2,1);
\coordinate (Btwo) at (1,2);
\coordinate (AB) at (3,0);
\draw[black,line width=0.8pt,-{Latex[length=2mm]}] (L) -- (N);
\draw[black,line width=0.8pt,-{Latex[length=2mm]}] (N) -- (TL);
\draw[black,line width=0.8pt,-{Latex[length=2mm]}] (L) -- (J);
\draw[black,line width=0.8pt,-{Latex[length=2mm]}] (J) -- (TL);
\draw[black,line width=0.8pt,-{Latex[length=2mm]}] (J) -- (BR);
\draw[black,line width=0.8pt,-{Latex[length=2mm]}] (BR) -- (TR);
\draw[black,line width=0.8pt,-{Latex[length=2mm]}] (TL) -- (TR);
\foreach \p in {N,L,TL,J,TR,BR} \fill[black] (\p) circle (2pt);\node[left=4pt] at (N) {$v_n$};
\node at (2,-0.4) {$G$};
\end{scope}

\begin{scope}[shift={(6,3.5)}]

\coordinate (N) at (0,2);
\coordinate (L) at (0,0);
\coordinate (TL) at (2,2);
\coordinate (J) at (2,0);
\coordinate (TR) at (4,2);
\coordinate (BR) at (4,0);
\coordinate (Aone) at (2,1);
\coordinate (Btwo) at (1,2);
\coordinate (AB) at (3,0);
\draw[context] (L) -- (N);
\draw[context] (N) -- (TL);
\draw[context] (L) -- (J);
\draw[context] (J) -- (TL);
\draw[context] (J) -- (BR);
\draw[context] (BR) -- (TR);
\draw[context] (TL) -- (TR);
\draw[forest] (N)--(L)--(J);
\draw[forest] (Aone)--(TL)--(Btwo);
\draw[forest] (TL)--(TR)--(BR);
\foreach \p in {N,L,TL,J,TR,BR} \fill[black] (\p) circle (2pt);
\fill[red] (Aone) circle (2.5pt);
\node[right=3pt,red] at (Aone) {$A_1$};
\fill[blue] (Btwo) circle (2.5pt);
\node[above=3pt,blue] at (Btwo) {$B_2$};
\begin{scope}[shift={(AB)}]
\fill[red] (0,0) -- (90:2.5pt) arc (90:270:2.5pt) -- cycle;
\fill[blue] (0,0) -- (-90:2.5pt) arc (-90:90:2.5pt) -- cycle;
\end{scope}
\node[above=4pt] at (AB) {$\textcolor{red}{A_2}=\textcolor{blue}{B_1}$};
\node[left=4pt] at (N) {$v_n$};
\node at (2,-0.4) {$H$};
\end{scope}

\begin{scope}[shift={(0,0)}]

\coordinate (N) at (0,2);
\coordinate (L) at (0,0);
\coordinate (TL) at (2,2);
\coordinate (J) at (2,0);
\coordinate (TR) at (4,2);
\coordinate (BR) at (4,0);
\coordinate (Aone) at (2,1);
\coordinate (Btwo) at (1,2);
\coordinate (AB) at (3,0);
\draw[context] (L) -- (N);
\draw[context] (N) -- (TL);
\draw[context] (L) -- (J);
\draw[context] (J) -- (TL);
\draw[context] (J) -- (BR);
\draw[context] (BR) -- (TR);
\draw[context] (TL) -- (TR);
\foreach \p in {N,L,TL,J,TR,BR} \fill[black] (\p) circle (2pt);
\draw[forest,-{Latex[length=2.4mm]}] (Aone)--(TL)--(Btwo);
\fill (TL) circle (2pt);
\node[left=4pt,red] at (Aone) {$+$};
\node[below=4pt,blue] at (Btwo) {$-$};

\fill[red] (Aone) circle (2.5pt);
\node[right=3pt,red] at (Aone) {$A_1$};
\fill[blue] (Btwo) circle (2.5pt);
\node[above=3pt,blue] at (Btwo) {$B_2$};
\begin{scope}[shift={(AB)}]
\fill[red] (0,0) -- (90:2.5pt) arc (90:270:2.5pt) -- cycle;
\fill[blue] (0,0) -- (-90:2.5pt) arc (-90:90:2.5pt) -- cycle;
\end{scope}
\node[above=4pt] at (AB) {$\textcolor{red}{A_2}=\textcolor{blue}{B_1}$};
\node[left=4pt] at (N) {$v_n$};
\node at (2,-0.4) {$\alpha(H)$};
\end{scope}

\begin{scope}[shift={(6,0)}]

\coordinate (N) at (0,2);
\coordinate (L) at (0,0);
\coordinate (TL) at (2,2);
\coordinate (J) at (2,0);
\coordinate (TR) at (4,2);
\coordinate (BR) at (4,0);
\coordinate (Aone) at (2,1);
\coordinate (Btwo) at (1,2);
\coordinate (AB) at (3,0);
\draw[forest] (L) -- (N);
\draw[forest] (N) -- (TL);
\draw[forest] (L) -- (J);
\draw[forest] (J) -- (TL);
\draw[forest] (J) -- (BR);
\draw[forest] (BR) -- (TR);
\draw[forest] (TL) -- (TR);
\foreach \p in {N,L,TL,J,TR,BR} \fill[black] (\p) circle (2pt);\node[left=4pt] at (N) {$v_n$};
\node at (2,-0.4) {$\overline H$};
\end{scope}
\end{circuitikz}
\caption{
A directed graph $G$ with a marked vertex $v_n$, a %valid
spanning forest $H$ with sources \(A=(A_1,A_2)\) and sinks \(B=(B_1,B_2)\), its connection
$\alpha(H)$, and its completion $\overline H=G$.
Black and purple arrows indicate edge and path directions,
respectively; the signs at the midpoints indicate whether these
directions agree. The signs multiply to $-1$, and
%Here $A=(A_1,A_2)$, $B=(B_1,B_2)$, and $A_2=B_1$.
the endpoint permutation induced by $\alpha(H)$ has sign $-1$, %and the orientation factors also have product $-1$,
so $\mathrm{sgn}_A^B(\alpha(H))=+1$. For %the
unit edge weights, the forest weight $c(\overline H)$ is also $1$.
Since $H$ is the unique %valid
forest with sources \(A\) and sinks \(B\), the parafermionic observable equals $F(A\leftrightarrow B)=\sum_{H}\mathrm{sgn}_A^B (\alpha(H)) c(\overline H)=1$. %=c(\overline H)$.
See Definition~\ref{def-spanning-forest}.
}
\label{fig:spanning-forest}
\end{figure}

\begin{definition}\label{def-spanning-forest} (See Fig.~\ref{fig:spanning-forest}.) Let $G$ be a directed graph with a marked vertex $v_n$ called the \emph{last vertex}. Let $A$ and $B$ be two ordered sets of midpoints in %a directed graph
$G$ with $|A|=|B|$, called the sets of \emph{sources} and \emph{sinks}, respectively. Recall that  \( G_A^B \) is the directed graph obtained from \( G \)
by subdivision of edges with the midpoints in $A\cup B$. A \emph{spanning forest with sources \( A \) and sinks \( B \)} is a spanning forest in the graph \( G_A^B \) satisfying the following conditions:
\begin{itemize}
    \item Each component, except for one, contains exactly one source and one sink.
    \item One component contains the last vertex $v_n$ %(the one not in \( I \))
    and no sources or sinks.
    \item Each source or sink in \( A \cap B \) has degree 0.
    \item Each source or sink not in \( A \cap B \) has degree 1.
\end{itemize}
For a spanning forest \( H \) with sources \( A \) and sinks \( B \), denote by \( \alpha(H) \) the unique connection from $A$ to $B$ in $H$. Recall definition~\eqref{eq-connection-sign} of the sign $\mathrm{sgn}_A^B (\alpha(H))$
and the notation $\overline H$ for completion (the subgraph of $G$ obtained from $H$ by adding all missing half-edges emanating from $A\cup B$).

The \emph{parafermionic observable} (for the uniform spanning-tree model) is defined as
\begin{equation}
    \label{eq-observable}
     F(A\leftrightarrow B):= \sum_{H} \mathrm{sgn}_A^B (\alpha(H)) c(\overline H),
\end{equation}
 where the sum is %taken
 over all the spanning forests \( H \) with sources \( A \) and sinks \( B \).
\end{definition}

\begin{example} \label{rem:empty} If $A=B=\emptyset$, then $H$ is a spanning tree and $F(\emptyset\leftrightarrow \emptyset)=\sum_{H} c(H)$ is the partition function for the (weighted) uniform spanning-tree model. It would be more natural to define the observable as $F(A\leftrightarrow B)/F(\emptyset\leftrightarrow \emptyset)$ (so that in Lemma~\ref{connection_F_and_L} below, the factor $\det K^{\mathrm{V}}_{\mathrm{V}}$ disappears), but we prefer not to normalize it in this way.
\end{example}

\begin{example} If %$A\cap B=\emptyset$,
$G$ is a plane graph and the points of $A$, $B$, and $v_n$ %are all distinct and
lie on one face boundary in order (the face is bounded by a simple cycle), % and $A\cap B=\emptyset$),
then $H$ is a \emph{banded spanning forest} in $G_A^B$ (with two coincident distinguished vertices $v_n$) as in \cite{Byun-Ciucu-24}.
\end{example}

\begin{remark} The parafermionic observable is invariant under interchanging the sets $A$ and $B$ of sources and sinks. In particular, for $|A|=1$, it is not important which of the points of $A\cup B$ is the source and which is the sink. For $|A|\ge2$, it becomes crucial. The parafermionic observable depends on the ordering of the sets $A$ and $B$; it may change sign if the ordering is changed. It also depends on the direction of edges containing the sources and sinks, and changes sign if the direction of an edge with midpoint in $(A\cup B)\setminus (A\cap B)$ is reversed. However, it does not depend on the direction of the other edges; we consider a directed graph just to fit our general setup.
\end{remark}

This construction has a natural extension to a source-synchronized network.
For two subsets $P,Q\subset\{1,\dots,b\}$ with $|P|=|Q|$,
%$P = (p_1, \ldots, p_k)$ and $Q = (q_1, \ldots, q_k)$ of $\{1,\ldots,b\}$,
define the \emph{total parafermionic observable} as
\begin{equation}\label{eq-total-observable}
 F_P^Q := \sum_{A,B: a_i \in \omega^{p_i},b_i \in \omega^{q_i}} F(A\leftrightarrow B),
\end{equation}
where the sum is over ordered sets \(A\) and \(B\) of midpoints consistent with $P$ and $Q$, respectively.
% \begin{remark}\label{no_valid_a_b}
% If the set of edges with the midpoints in $A$ is not valid relative to $P$ or the set of edges with the midpoints in $B$ is not valid relative to $Q$, then $F_{P}^Q(A, B) = 0$.
% \end{remark}

Let us show how this notion arises naturally in the computation of response-matrix minors.

\begin{lemma} \label{connection_F_and_L}
For any subsets $P,Q\subset\{1,\dots,b\}$ with $|P|=|Q|$,
we have \(\det L_P^Q \cdot \det K^\mathrm{V}_\mathrm{V} = F_P^Q.\)
\end{lemma}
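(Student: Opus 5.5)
The plan is to start from the connection decomposition of Lemma~\ref{l-connections} and to evaluate each minor $\det K(S\setminus\overline\alpha)^{\mathrm V\setminus\alpha}_{\mathrm V\setminus\alpha}$ on its right-hand side by the classical matrix-tree theorem. The aim is to recognize the resulting double sum as the sum over spanning forests with sources and sinks that defines $F_P^Q$ in~\eqref{eq-total-observable}.

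\textbf{Step 1: regroup the decomposition by $A$ and $B$.} Every connection $\alpha$ from $P$ to $Q$ is, by definition, a connection from some ordered sets $A,B$ consistent with $P,Q$. So the right side of~\eqref{eq-connections} splits as
\[
\sum_{A,B}\ \sum_{\alpha\colon A\to B}\mathrm{sgn}_A^B(\alpha)\,c(\overline\alpha)\,
\det K(S\setminus\overline\alpha)^{\mathrm V\setminus\alpha}_{\mathrm V\setminus\alpha},
\]
where the inner sum is over connections avoiding $v_n$. It then suffices to show, for fixed $A,B$, that the inner sum equals $F(A\leftrightarrow B)$.

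\textbf{Step 2: identify the minor as a spanning-tree sum.} The block $K(S\setminus\overline\alpha)^{\mathrm V\setminus\alpha}_{\mathrm V\setminus\alpha}$ involves only trivial sources. It is the Laplacian of $S\setminus\overline\alpha$ with the rows and columns of the vertices of $\alpha$ and of $v_n$ deleted. Such a principal minor equals the Laplacian of the quotient graph $S^\alpha=(S\setminus\overline\alpha)/(\alpha\cup\{v_n\})$ of Fig.~\ref{fig:auxiliary-graphs} with the row and column of the glued vertex $[v_n]$ deleted. Two points need care here. Deleting a vertex row/column corresponds to grounding it, and merging grounded vertices preserves the reduced Laplacian. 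Edges that become loops contribute nothing, matching the convention $\langle\delta^i,e\rangle=0$ for loops. By the matrix-tree theorem the minor is therefore $\sum_{T} c(T)$ over spanning trees $T$ of $S^\alpha$. Such trees correspond bijectively to spanning forests $R$ of $S\setminus\overline\alpha$ in which every component contains exactly one of the vertex sets of $v_n$ and of the paths $\alpha_1,\dots,\alpha_k$. A trivial path at a midpoint of $A\cap B$ contains no vertex and gives no component; its edge lies in $\overline\alpha$ and is deleted.

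\textbf{Step 3: build the bijection with forests with sources and sinks.} Send a pair $(\alpha,R)$ to $H:=\alpha\cup R$, viewed in $G_A^B$ with $G=S$. Then $H$ is a spanning forest of $S_A^B$ with the following structure. Each component of $H$ other than the one containing $v_n$ contains exactly one path of $\alpha$, hence exactly one source and one sink. The component of $v_n$ has no sources or sinks, because $\alpha$ avoids $v_n$. Midpoints in $(A\cup B)\setminus(A\cap B)$ have degree $1$, since the half-edge not on the path belongs to $\overline\alpha$ and is excluded from $R$, while midpoints in $A\cap B$ have degree $0$. Conversely, for such an $H$ the connection $\alpha(H)$ is determined as the union of the unique source-to-sink paths, and $R=H\setminus\alpha(H)$ avoids $\overline{\alpha(H)}$. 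So the map is a bijection.

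\textbf{Step 4: match the weights; this is the main obstacle.} We need $c(\overline\alpha)\,c(R)=c(\overline H)$. The edges of $\overline H$ are the edges of $\overline\alpha$ together with the edges of $R$, and these are disjoint. The delicate point is that $c$ must be computed for the subdivided graph consistently with the conventions of the proof of Lemma~\ref{l-connections}. There, each edge $e_{\rho^i}$ on a path contributes $c(e_{\rho^i})$ once, so a subdivided edge counts with its full conductance, not per half-edge. I would check this explicitly against the definition of completion. With the weights matched, multiplying by $\mathrm{sgn}_A^B(\alpha(H))$ turns the inner sum into exactly~\eqref{eq-observable}. Summing over $A,B$ and applying Lemma~\ref{l-connections} then gives $\det L_P^Q\det K^{\mathrm V}_{\mathrm V}=F_P^Q$.
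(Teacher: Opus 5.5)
Your route is the paper's route: regroup the right side of \eqref{eq-connections} by $(A,B)$, identify $K(S\setminus\overline\alpha)^{\mathrm V\setminus\alpha}_{\mathrm V\setminus\alpha}$ with the reduced Kirchhoff matrix of $S^\alpha$, apply the matrix-tree theorem, and match the resulting trees with forests $H$ satisfying $\alpha(H)=\alpha$.

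\textbf{The gap: the forest condition in Step~2 is wrong, and Step~3 depends on it.} In $S^\alpha$, all vertices lying on the paths of $\alpha$ are glued together with $v_n$ into the single vertex $[v_n]$. Let $W$ be the set consisting of $v_n$ and all vertices of $S$ on the paths of $\alpha$. A spanning tree of $S^\alpha$ then corresponds to a spanning forest $R$ of $S\setminus\overline\alpha$ in which each component contains exactly one \emph{vertex} of $W$. It does not correspond to a forest in which each component contains one of the \emph{sets} $\{v_n\},V(\alpha_1),\dots,V(\alpha_k)$.

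Your condition allows a component of $R$ to contain two vertices $u,w$ of the same path $\alpha_i$. This breaks the argument in two ways:
\begin{itemize}
\item The $R$-path from $u$ to $w$ becomes a cycle through $[v_n]$ in $S^\alpha$, so such an $R$ is not counted by the minor.
\item $\alpha\cup R$ contains a cycle, so your $H$ is not a forest.
\end{itemize}
Under the literal ``contains'' reading, a component could also contain $v_n$ together with a single vertex of some $\alpha_i$, which would merge the $v_n$-component with a source--sink component.

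With the corrected condition, Step~3 goes through. Each non-$v_n$ component of $H=\alpha\cup R$ is one path $\alpha_i$ with subtrees of $R$ hanging from its distinct vertices. Conversely, deleting the edges of the path $\alpha_i$ from a tree component of $H$ leaves pieces each containing exactly one vertex of $\alpha_i$. Hence $R=H\setminus\alpha(H)$ satisfies the condition, and the map is a bijection. This is exactly the paper's bijection $H\mapsto H^\alpha$.

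\textbf{Step~4 is not an obstacle.} The quantities $c(\overline\alpha)$ and $c(\overline H)$ are defined through completions, which are subgraphs of $S$. So every conductance here is a product over edges of $S$, and no half-edge convention ever arises. The edge set of $\overline H$ is the disjoint union of the edge sets of $\overline\alpha$ and $R$. This gives $c(\overline H)=c(\overline\alpha)\,c(R)$ at once, which is the paper's identity $\det K(S^\alpha)^{\mathrm V\setminus\alpha}_{\mathrm V\setminus\alpha}=\sum_{H:\alpha(H)=\alpha}c(\overline H)/c(\overline\alpha)$.
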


\begin{proof}
Let us %apply Lemma~\ref{l-connections} and
derive a combinatorial formula for the determinant on the right side of~\eqref{eq-connections}. Fix ordered sets $A$ and $B$ of midpoints consistent with $P$ and $Q$, respectively, and a connection $\alpha$ between $A$ and $B$, whose paths do not pass through %the last vertex
$v_n$. Let $S^\alpha:=(S\setminus\overline\alpha)/(\alpha\cup\{v_n\})$ be the graph obtained from the network $S$ by deleting all the edges of $\overline\alpha$, then identifying all the vertices in $\alpha$ (except for the endpoints) and the last vertex~$v_n$, and deleting all the loops in the resulting graph. See Fig.~\ref{fig:auxiliary-graphs} to the right. Denote by $K(S^\alpha)_{\mathrm{V}\setminus \alpha}^{\mathrm{V}\setminus \alpha}$ the Kirchhoff matrix of $S^\alpha$ %the resulting graph
without the row and the column labeled by the vertex obtained from $v_n$.

Let us prove that
$$
\det K(S\setminus\overline\alpha)
_{\mathrm{V}\setminus\alpha}^{\mathrm{V}\setminus\alpha}
= \det {K}(S^\alpha)_{\mathrm{V}\setminus \alpha}^{\mathrm{V}\setminus \alpha}
= \sum_{H^\alpha}c(H^\alpha)
=\sum_{H:\alpha(H)=\alpha}c(\overline H)/c(\overline\alpha),
$$
where the first sum is over all the spanning trees $H^\alpha$ in $S^\alpha$ and the second sum is over all the spanning forests \( H \) in $S^B_A$ with sources in \( A \) and sinks in \( B \) such that $\alpha(H)=\alpha$. Indeed, the first equality follows from $K(S\setminus\overline\alpha)_{\mathrm{V}\setminus \alpha}^{\mathrm{V}\setminus \alpha}={K}(S^\alpha)_{\mathrm{V}\setminus \alpha}^{\mathrm{V}\setminus \alpha}$ because identifying the vertices in $\alpha$ and $v_n$ does not affect these sub-matrices. The second one is the Kirchhoff matrix-tree theorem for $S^\alpha$. The third one holds because
$H\mapsto H^\alpha$, %=(H\setminus\partial\alpha)/(\alpha\cup\{v_n\})
where $H^\alpha$ is constructed from $\overline H$ in the same way as $S^\alpha$ was constructed from $S$, is a bijection between the %spanning
forests and %spanning
trees in question.

Multiplication by $\mathrm{sgn}_P^Q(\alpha)c(\overline\alpha)$, summation over all connections $\alpha$ in $S$ from $P$ to $Q$ whose paths do not pass through %the last vertex
$v_n$, %then over ordered sets $A$ and $B$ consistent with $P$ and $Q$, respectively,
and application of Lemma~\ref{l-connections} lead to the desired result.
\end{proof}

The following lemma shows that in the sum defining $F(A\leftrightarrow B)$, the contributions of some forests cancel out, so that we can restrict ourselves to so-called valid forests, defined now.

A subgraph \(G\) of $S$ is \textit{valid relative to an ordered set $A$ of midpoints} if it contains all edges with the midpoints in $A$ and becomes a spanning tree after deleting these edges.
%
% A spanning forest with sources \( A \) and sinks \( B \) is called \textit{valid} (and \textit{invalid} otherwise) if it becomes a valid subgraph relative to both $A$ and $B$ when we include back the edges with midpoints in $A$ and $B$ into the subgraph and remove the newly introduced vertices $A$ and $B$.
%
A spanning forest $H$ with sources $A$ and sinks $B$ is
\emph{valid} if the completion $\overline H$ is valid relative to both $A$ and $B$. See Fig.~\ref{fig:spanning-forest}.
Otherwise, $H$ is \emph{invalid}.
See Fig.~\ref{fig:bijection}.

%The following lemma shows that in the sum defining $F(A\leftrightarrow B)$, the contributions of invalid forests cancel out, so that we can restrict ourselves to valid spanning forests.

\begin{lemma} \label{lemma:valid_forests}
The parafermionic observable equals
$$F(A\leftrightarrow B) = \sum_{H\text{ \textrm{valid}}} \mathrm{sgn}_A^B (\alpha(H)) c(\overline H),$$
 where the sum is over all valid spanning forests \( H \) with sources \( A \) and sinks \( B \).
\end{lemma}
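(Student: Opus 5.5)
The plan is to group the forests in \eqref{eq-observable} by their completion and show that each invalid completion contributes zero in total. For a spanning forest $H$ with sources $A$ and sinks $B$, write $\overline H=:\Gamma\subset S$. A count of vertices, components and added half-edges shows that $\Gamma$ is connected, spans all $n$ vertices, and has exactly $n+k-1$ edges, where $k=|A|$. The count is $(n+|A\cup B|)-(k+1)$ edges in $H$, then $+\,|A\cup B|+|A\cap B|$ from completion, then $-\,|A\cup B|$ from suppression, with $|A\cup B|=2k-|A\cap B|$. Since $c(\overline H)=c(\Gamma)$ depends only on $\Gamma$, we have
$$F(A\leftrightarrow B)=\sum_{\Gamma} c(\Gamma)\, s(\Gamma),\qquad s(\Gamma):=\sum_{H:\overline H=\Gamma}\mathrm{sgn}_A^B(\alpha(H)).$$
Moreover, $\Gamma\setminus A$ has $n-1$ edges, so it is a spanning tree if and only if it is connected, equivalently acyclic. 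It therefore suffices to prove $s(\Gamma)=0$ whenever $\Gamma\setminus A$ or $\Gamma\setminus B$ fails to be a spanning tree.

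To compute $s(\Gamma)$, I would apply the earlier results to $\Gamma$ itself, viewed as a source-synchronized network with last vertex $v_n$. Its synchronized sources are the singletons $\{a_1\},\dots,\{a_k\}$ and $\{b_1\},\dots,\{b_k\}$; a single source is used whenever $a_i=b_j$. Let $P,Q$ be the corresponding index sets, ordered as in $A$ and $B$; reordering only changes an overall sign, which is harmless. Then $A$ and $B$ are the only ordered sets consistent with $P$ and $Q$. Hence Lemma~\ref{connection_F_and_L} for $\Gamma$ gives the polynomial identity
$$F_\Gamma(A\leftrightarrow B)=\pm\det L(\Gamma)_P^Q\cdot\det K(\Gamma)^{\mathrm V}_{\mathrm V}$$
in the conductances of $\Gamma$. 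The forests with sources $A$ and sinks $B$ in $\Gamma$ are exactly those forests in $S$ whose completion is contained in $\Gamma$. Their completions are distinct subgraphs of $\Gamma$, each with $n+k-1$ edges. Since $c(\cdot)$ is a squarefree monomial, the coefficient of the full monomial $c(\Gamma)$ in $F_\Gamma(A\leftrightarrow B)$ is exactly $s(\Gamma)$. So it suffices to show that $\det L(\Gamma)_P^Q=0$ identically when $\Gamma\setminus A$ is not a tree; the case of $B$ follows by symmetry of $L$.

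For this, I would use the cohomological picture of Sec.~\ref{ssec-cohomological networks} together with Example~\ref{ex:reduc}. The current $I_j$ produced by the source $\{a_j\}$ depends only on the cohomology class of the elementary cochain $\omega^{a_j}$ (value $1$ on $a_j$, $0$ elsewhere) in $H^1(\Gamma;\mathbb R)\cong\mathbb R^k$. Likewise $L_i^j=\langle\omega^{i},I_j\rangle$ depends only on the class of $\omega^i$, because $I_j$ is a cycle. The key step is the equivalence: the classes of $\omega^{a_1},\dots,\omega^{a_k}$ are linearly independent if and only if $\Gamma\setminus A$ is a spanning tree.
\begin{itemize}
\item If $\Gamma\setminus A$ is a tree $T$, the fundamental cycles of $a_1,\dots,a_k$ with respect to $T$ pair with these classes by the identity matrix, so the classes are independent.
\item If $\Gamma\setminus A$ is not a tree, it contains a cycle. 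That cycle is a nonzero homology class on which all $\omega^{a_j}$ vanish. Since $\dim H^1=k$, the classes are then dependent.
\end{itemize}
In the dependent case, the rows of $L(\Gamma)$ indexed by the $a$'s, i.e. by $P$, satisfy the same linear relation, so $\det L(\Gamma)_P^Q=0$. Hence $s(\Gamma)=0$ for every invalid $\Gamma$, and summing over the remaining (valid) completions gives the lemma.

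The step I expect to need most care is the bookkeeping in applying Lemma~\ref{connection_F_and_L} to $\Gamma$. This includes handling coinciding midpoints $a_i=b_j$ and the resulting merged sources, checking that the orderings of $P,Q$ contribute only a global sign, and confirming that no term of $F_\Gamma$ other than those with completion exactly $\Gamma$ produces the monomial $c(\Gamma)$. The linear-algebra step above is then routine.
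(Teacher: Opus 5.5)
\textbf{The gap: $\Gamma=\overline H$ need not be connected.} Your edge count is right: $\overline H$ has $n+k-1$ edges on $n$ vertices. But that does not force connectivity. Take vertices $x,y$ joined by three parallel edges $e_1,e_2,e_3$, plus an edge $yv_n$, and let $A$, $B$ be the midpoints of $e_1$, $e_2$. The path from $A$ to $x$, along $e_3$ to $y$, and on to $B$, together with the isolated vertex $v_n$, is a spanning forest with sources $A$ and sinks $B$. Its completion $\{e_1,e_2,e_3\}$ leaves $v_n$ isolated. Such completions are invalid and do carry forests (four here, whose signs sum to zero). Your argument does not reach them:
\begin{itemize}
\item $\Gamma$ is then not a source-synchronized network;
\item $\det K(\Gamma)^{\mathrm V}_{\mathrm V}=\sum_T c(T)=0$ and $\dim H^1(\Gamma;\mathbb R)>k$;
\item Lemma~\ref{connection_F_and_L} rests, via equality (1) in the proof of Lemma~\ref{l-connections}, on Lemma~\ref{schur_complement}, which needs $K^{\mathrm V}_{\mathrm V}$ invertible.
\end{itemize}
So the identity $F_\Gamma(A\leftrightarrow B)=\pm\det L(\Gamma)^Q_P\det K(\Gamma)^{\mathrm V}_{\mathrm V}$ is not available in exactly this case.

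\textbf{How to repair it.} Equalities (2)--(3) in the proof of Lemma~\ref{l-connections} and the matrix-tree step in the proof of Lemma~\ref{connection_F_and_L} never use connectivity. Alternatively, set $c(e)=0$ for $e\notin\Gamma$ in the polynomial identity for the connected network $S$ with singleton sources on the edges of $A\cup B$. Either way, $F_\Gamma(A\leftrightarrow B)=\pm\det K(\Gamma)^{Q\cup\mathrm V}_{P\cup\mathrm V}$ for every $\Gamma$. If a component $C$ of $\Gamma$ misses $v_n$, then $\sum_{v\in C}\delta^v$ vanishes on every edge of $\Gamma$. Hence the columns labelled by the vertices of $C$ sum to zero, and $F_\Gamma(A\leftrightarrow B)=0$. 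For connected $\Gamma$ your cohomological argument works as written.

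A shortcut also exists. Write $K^{Q\cup\mathrm V}_{P\cup\mathrm V}$ as the product of the matrix $(\langle\lambda,e\rangle)$, the diagonal matrix of conductances, and the transpose of $(\langle\rho,e\rangle)$. Cauchy--Binet then gives the coefficient of $c(\Gamma)$ directly: it is $\pm$ the product of the determinants of the reduced incidence matrices of $\Gamma\setminus A$ and $\Gamma\setminus B$, i.e.\ $\pm1$ or $0$ according to whether $\Gamma$ is valid.

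A smaller point: by the edge count, any completion contained in $\Gamma$ equals $\Gamma$. So in fact $F_\Gamma(A\leftrightarrow B)=s(\Gamma)\,c(\Gamma)$, and the completions are not ``distinct''.

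\textbf{Comparison with the paper.} Once patched, your route is a legitimate algebraic alternative: you read off $s(\Gamma)$ from a determinantal identity and kill it by linear algebra. The paper instead pairs off invalid forests directly by a sign-reversing involution. It swaps included and excluded half-edges at the midpoints of $A\setminus B$ along a lexicographically minimal cycle. This changes the sign by $(-1)^m(-1)^{m-1}$, preserves the completion, and never needs the global structure of completions.
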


The proof is analogous to the proof of \cite[Lemma 7.8]{PSS}.

\begin{proof}
It suffices to prove that the sum over invalid spanning forests vanishes, i.e.,
\[
\sum_{H\text{ invalid}} \mathrm{sgn}_A^B(\alpha(H)) c(\overline H) = 0.
\]

Assume \( H \) is an invalid spanning forest with sources \(A\) and sinks \(B\).
Then, by definition, adding the half-edges emanating from \(A \setminus B\) or from \(B \setminus A\)
%(and suppressing the resulting midpoints), the obtained graph is not valid relative to the corresponding set. Since every component of a spanning forest is a tree, this can only happen through the creation of a cycle.
creates a cycle.
Without loss of generality, assume that a simple cycle arises after adding the half-edges emanating from \(A \setminus B\).

Since the vertices in our network are numbered, we may represent each cycle as a string of vertex numbers in the order they appear on the cycle,
%, written in the order in which the cycle is traversed,
starting with the vertex with the minimal number in the cycle. Choose the simple cycle %that corresponds to
with the lexicographically smallest string.

The added half-edges split the cycle into a disjoint collection of paths
\[
s_1 \to \cdots \to t_1, \quad s_2 \to \cdots \to t_2, \quad \ldots, \quad s_m \to \cdots \to t_m,
\]
contained in $H$ and appearing in order along the cycle.

Let us examine how the half-edges reconnect these paths to form a cycle. The half-edge \( s_1 t_m \) has an endpoint in \( A \setminus B \), thus either \( s_1  \) or \( t_m \) belongs to \( A \setminus B \). Suppose \( s_1 \in A \setminus B \) (the other case is symmetric). Then, by the definition of a spanning forest with sources \( A \) and sinks \( B \),  the vertex $s_1$ has degree $1$ in $H$. In particular, $t_1\ne s_1$, and thus
\( t_1 \notin A \),
because each component contains exactly one source or no sources.
Next, the half-edge \( t_1 s_2 \) has again an endpoint in \( A \setminus B \). Since \( t_1 \notin A \), we conclude \( s_2 \in A \setminus B \). Continuing this argument inductively, we deduce that all \( s_i \in A \setminus B \), and the path \( s_i \to \cdots \to t_i \) is a part of the component of %vertex
\( s_i \in A \setminus B \) in %the forest
\( H \).

We now define a sign-reversing involution on the set of invalid forests; see Fig.~\ref{fig:bijection}. Along the identified cycle, we swap the included and excluded half-edges emanating from~$A \setminus B$: that is, at each midpoint \( s_i \), we remove the currently included half-edge and instead include the missing one. This results in a new forest \( \widetilde{H} \), where each %vertex
\( s_i \in A\setminus B \) is reconnected to the component that previously contained \( s_{i-1} \). Then \(\widetilde{H}\) is a spanning forest with sources $A$ and sinks $B$ because:

\begin{itemize}
    \item Each component but one still connects one source %from \( A \)
    to one sink: %in \( B \):
    the involution rearranges which source is connected to which sink, but does not break or merge components.
    \item The component containing \( v_n \) is unchanged, because the cycle does not intersect it.
    \item Exactly one half-edge is still emanating from each midpoint in \( A \setminus B\) and in \( B \setminus A\).
    \item The isolated vertices from \( A\cap B \) remain such, because the cycle does not contain them.
\end{itemize}

Let us show that \( \mathrm{sgn}_A^B(\alpha(\widetilde{H})) \) is opposite to \( \mathrm{sgn}_A^B(\alpha(H)) \). Along the cycle, we replace \( m \) included half-edges with the excluded %opposite
ones, reversing their orientations in the connection. This flips the sign of each factor %corresponding algebraic intersection number
$\langle \vec{a_i}, a_i\rangle$ in~\eqref{eq-connection-sign},
resulting in overall multiplication by \( (-1)^m \).
%, where \( \vec{a_i} \) is the half-edge oriented along the connection. Since there are \( t \) such replacements, this contributes a factor of \( (-1)^t \) to the total sign.

Recall that \(\tau\) is the unique permutation such that for each \( i = 1, \dots, |A| \), the connection \(\alpha\) contains a path from the midpoint of \( a_i \) to the midpoint of \( b_{\tau(i)} \).
Here, the permutation \(\tau\)
%\mscomm{recall its definition here!}
is composed with a cycle of length \(m\) because the involution cyclically reassigns each source to a different sink. %It contributes a factor of
This results in multiplication by \( (-1)^{m-1} \).

Thus, the total sign change is
\(
(-1)^m \cdot (-1)^{m - 1} = -1.
\)
Since $H$ and $\widetilde H$ have the same completion, %$\overline H=\overline{\widetilde H}$, %are preserved under this involution,
it follows that their contributions %of \( H \) and \( \widetilde{H} \) c
ancel out in the sum.
%
%Therefore, the sum over all invalid spanning forests vanishes, and we conclude that $F_{P}^Q(A, B) = (-1)^k \sum_{H} \mathrm{sgn} (\alpha(H)) w(H)$ can be computed by summing only over valid spanning forests $H$ with sources $A$ and sinks $B$.
\end{proof}

\begin{figure}[ht]
\centering
\begin{circuitikz}

\begin{scope}[shift={(0,0)}]

  % ===== координаты =====

  % левый квадрат 2x2
  \coordinate (N)   at (0,2);   % v_n -- верхняя левая
  \coordinate (L)   at (0,0);   % нижняя левая
  \coordinate (TL)  at (2,2);   % общая верхняя вершина
  \coordinate (J)   at (2,0);   % общая нижняя вершина

  % правый квадрат 2x2
  \coordinate (TR)  at (4,2);
  \coordinate (BR)  at (4,0);

  % отмеченные точки
  \coordinate (B2)  at (1,2);
  \coordinate (B1)  at (1,0);

  \coordinate (A1)  at (2,1);
  \coordinate (A2)  at (3,0);

  % ===== рёбра =====

  \draw
    (N) -- (TL) -- (TR) -- (BR) --
    (J) -- (L) -- cycle;

  % общая сторона квадратов
  \draw (TL) -- (J);

  % ===== фиолетовые участки =====

  % левая сторона
  \draw[purple, line width=2pt] (L) -- (N);

  % верх левого квадрата
  \draw[purple, line width=2pt] (TL) -- (B2);

  % низ левого квадрата
  \draw[purple, line width=2pt] (J) -- (B1);

  % A1: теперь нижнее полуребро
  \draw[purple, line width=2pt] (A1) -- (J);

  % A2: теперь правое полуребро
  \draw[purple, line width=2pt] (A2) -- (BR);

  % верх правого квадрата
  \draw[purple, line width=2pt] (TR) -- (TL);

  % правая сторона
  \draw[purple, line width=2pt] (BR) -- (TR);

  % ===== подписи =====

  \node[left] at (N) {$v_n$};

  \node[right, red] at (A1) {$A_1$};
  \node[above, red] at (A2) {$A_2$};

  \node[above, blue] at (B1) {$B_1$};
  \node[above, blue] at (B2) {$B_2$};

  % ===== вершины =====

  \foreach \p in {N,L,TL,J,TR,BR}
    \fill (\p) circle (2pt);

  \foreach \p in {A1,A2}
    \fill[red] (\p) circle (2pt);

  \foreach \p in {B1,B2}
    \fill[blue] (\p) circle (2pt);
  \draw[<->, thick, <->] (5,1) -- (7,1) node[midway, above] {};
\end{scope}

% === Правая копия: вершины + рёбра ===
\begin{scope}[shift={(8,0)}]

  % ===== координаты =====

  % левый квадрат 2x2
  \coordinate (N)   at (0,2);   % v_n -- верхняя левая
  \coordinate (L)   at (0,0);   % нижняя левая
  \coordinate (TL)  at (2,2);   % общая верхняя вершина
  \coordinate (J)   at (2,0);   % общая нижняя вершина

  % правый квадрат 2x2
  \coordinate (TR)  at (4,2);
  \coordinate (BR)  at (4,0);

  % отмеченные точки
  \coordinate (B2)  at (1,2);
  \coordinate (B1)  at (1,0);

  \coordinate (A1)  at (2,1);
  \coordinate (A2)  at (3,0);

  % ===== рёбра =====

  % два одинаковых квадрата
  \draw
    (N) -- (TL) -- (TR) -- (BR) --
    (J) -- (L) -- cycle;

  % общая сторона квадратов
  \draw (TL) -- (J);

  % ===== фиолетовые участки =====

  % левая сторона
  \draw[purple, line width=2pt]
    (L) -- (N);

  % верх левого квадрата
  \draw[purple, line width=2pt]
    (TL) -- (B2);

  % низ левого квадрата
  \draw[purple, line width=2pt]
    (J) -- (B1);

  % общая вертикальная сторона
  \draw[purple, line width=2pt]
    (A1) -- (TL);

  % низ правого квадрата
  \draw[purple, line width=2pt]
    (A2) -- (J);

  % верх правого квадрата
  \draw[purple, line width=2pt]
    (TR) -- (TL);

  % правая сторона
  \draw[purple, line width=2pt]
    (BR) -- (TR);

  % ===== подписи =====

  \node[left] at (N) {$v_n$};

  \node[right, red] at (A1) {$A_1$};
  \node[above, red] at (A2) {$A_2$};

  \node[above, blue] at (B1) {$B_1$};
  \node[above, blue] at (B2) {$B_2$};

  % ===== вершины =====

  % чёрные вершины только в углах
  \foreach \p in {N,L,TL,J,TR,BR}
    \fill (\p) circle (2pt);

  % красные точки
  \foreach \p in {A1,A2}
    \fill[red] (\p) circle (2pt);

  % синие точки
  \foreach \p in {B1,B2}
    \fill[blue] (\p) circle (2pt);

\end{scope}

\end{circuitikz}
\caption{A sign-reversing involution on the set of invalid forests.}
\label{fig:bijection}
\end{figure}

\section{Modified Temperley correspondence}
\label{sec-Temperley}

In this section, we continue the proof of Theorem~\ref{th-all-minor-matrix-tree}, namely, rewrite the parafermionic observable as a sum over valid subgraphs of $S$. For that, we use %dimers and double-dimers, and
a modification of the Temperley correspondence between dimers and spanning trees to show that $H\mapsto\overline{H}$ is a bijection between valid spanning forests in $S^B_A$ and subgraphs of $S$ valid relative to both $A$ and $B$.
In the setup of Definition~\ref{def-spanning-forest}, recall that $G'$ is the graph obtained from $G$ by subdivision of all edges.

\emph{A dimer configuration} (or \emph{a perfect matching}) on $G'\setminus (\{ v_n\} \cup A)$ %a graph
is a set of half-edges such that each vertex or midpoint of $G'\setminus (\{ v_n\} \cup A)$ is the endpoint of exactly one half-edge.

An \emph{oriented dimer configuration directed towards midpoints} is %an oriented
a perfect matching on $G'\setminus (\{ v_n\} \cup A)$ %such that each half-edge is
with each half-edge oriented from a vertex to a midpoint. See Fig.~\ref{fig:double_dimer}, bottom-left. %Similarly,
An \emph{oriented dimer configuration directed from midpoints} is a perfect matching on $G'\setminus (\{ v_n\} \cup B)$ %such that
with half-edges %are
oriented from midpoints to vertices.  See Fig.~\ref{fig:double_dimer}, bottom-middle.

 An \emph{oriented double-dimer configuration} $\vec D$ on $G' \setminus \{v_n\}$ with sources $A$ and sinks $B$ is a set of oriented half-edges in $G' \setminus (\{ v_n\} \cup (A \cap B))$ such that each source in $A \setminus B$ has exactly one outgoing and no incoming half-edge, each sink in $B \setminus A$ has exactly one incoming and no outgoing half-edge, and every other vertex in $G' \setminus (\{ v_n\} \cup (A \cap B))$ has exactly one incoming and one outgoing half-edge.  See Fig.~\ref{fig:double_dimer}, bottom-right.

Consider the collection of oriented paths from sources to sinks in \( \vec D \). After suppression of all midpoints except those in \( A \) and \( B \), and adding a trivial one-vertex path for each vertex in \(A \cap B \), we obtain a connection from \( A \) to \( B \). We denote this connection by \( \alpha(\vec D) \).

Given a half-edge $\vec{yx}$ directed from a midpoint~$y$ to a vertex~$x$, its \emph{half-edge flip} is the half-edge $\vec{yz}$, where $z \ne x$ is the other endpoint of the edge containing %the midpoint
$y$. See Fig.~\ref{fig:half_edge_flip}.
 \begin{figure}[ht]
 \centering
\begin{circuitikz}

\begin{scope}[shift={(0,0)}]
  \coordinate (Xl) at (0,0);
  \coordinate (Yl) at (1.6,0);
  \coordinate (Zl) at (3.2,0);
  \draw[gray, line width=0.7pt] (Xl) -- (Zl);
  \draw[<- , line width=0.9pt] (Xl) -- (Yl);
  \fill[gray] (Yl) circle (1.8pt);
  \fill[black]  (Xl) circle (1.8pt);
  \fill[black]  (Zl) circle (1.8pt);
  \node[above] at (Xl) {$x$};
  \node[above] at (Yl) {$y$};
  \node[above] at (Zl) {$z$};
\end{scope}

\draw[->, line width=0.9pt] (4.2,0) -- (5.0,0);
\begin{scope}[shift={(6,0)}]
  \coordinate (Xr) at (0,0);
  \coordinate (Yr) at (1.6,0);
  \coordinate (Zr) at (3.2,0);
  \draw[gray, line width=0.7pt] (Xr) -- (Zr);
  \draw[-> , line width=0.9pt] (Yr) -- (Zr);
  \fill[gray] (Yr) circle (1.8pt);
  \fill[black]  (Xr) circle (1.8pt);
  \fill[black]  (Zr) circle (1.8pt);
  \node[above] at (Xr) {$x$};
  \node[above] at (Yr) {$y$};
  \node[above] at (Zr) {$z$};
\end{scope}

\end{circuitikz}
 \caption{A half-edge \(\vec{yx}\) and its half-edge flip \(\vec{yz}\).}
 \label{fig:half_edge_flip}
 \end{figure}

 \begin{figure}[ht]
 \centering
\begin{circuitikz}

% 1
\begin{scope}[shift={(0,0)}]
\node[above] at (1,4.2) {$G$};
\coordinate (A)   at (0,0);
\coordinate (A1)   at (0,3);
\coordinate (A2)   at (2,1);
\coordinate (B2)   at (0,1);
\coordinate (B1)   at (2,3);
\coordinate (B)    at (2,0);
\coordinate (C)    at (0,2);
\coordinate (D)    at (2,2);
\coordinate (E)    at (0,4);
\coordinate (F)    at (2,4);
\node[left] at (A) {$5$};
\node[right] at (B) {$6$};
\node[right] at (D) {$4$};
\node[right] at (F) {$2$};
\node[left] at (E) {$1$};
\node[left] at (C) {$3$};
\node[left, red] at (A1) {$A_1$};
\node[right, red] at (A2) {$A_2$};
\node[left, blue] at (B2) {$B_2$};
\node[right, blue] at (B1) {$B_1$};
\draw (A) -- (B) -- (F) -- (E) -- (A);
\draw (D) -- (C);
\foreach \p in {A, B, C, D, E, F} \fill (\p) circle (2pt);
\foreach \p in {A1, A2} \fill[red] (\p) circle (2pt);
\foreach \p in {B1, B2} \fill[blue] (\p) circle (2pt);
\end{scope}

% 2
\begin{scope}[shift={(5,0)}]
\node[above] at (1,4.2) {$H(G)$};
\coordinate (A)   at (0,0);
\coordinate (A1)   at (0,3);
\coordinate (A2)   at (2,1);
\coordinate (B2)   at (0,1);
\coordinate (B1)   at (2,3);
\coordinate (B)    at (2,0);
\coordinate (C)    at (0,2);
\coordinate (D)    at (2,2);
\coordinate (E)    at (0,4);
\coordinate (F)    at (2,4);
\draw (A) -- (B) -- (F) -- (E) -- (A);
\draw (D) -- (C);
\draw[purple, line width=2pt] (A1) -- (E) -- (F) -- (B1);
\draw[purple, line width=2pt] (B2) -- (C) -- (D) -- (A2);
\draw[purple, line width=2pt] (A) -- (B);
\node[left] at (A) {$5$};
\node[right] at (B) {$6$};
\node[right] at (D) {$4$};
\node[right] at (F) {$2$};
\node[left] at (E) {$1$};
\node[left] at (C) {$3$};
\node[left, red] at (A1) {$A_1$};
\node[right, red] at (A2) {$A_2$};
\node[left, blue] at (B2) {$B_2$};
\node[right, blue] at (B1) {$B_1$};
\foreach \p in {A, B, C, D, E, F} \fill (\p) circle (2pt);
\foreach \p in {A1, A2} \fill[red] (\p) circle (2pt);
\foreach \p in {B1, B2} \fill[blue] (\p) circle (2pt);
\end{scope}

% 3
\begin{scope}[shift={(10,0)}]
\node[above] at (1,4.2) {$\alpha(G)$};
\coordinate (A)   at (0,0);
\coordinate (A1)   at (0,3);
\coordinate (A2)   at (2,1);
\coordinate (B2)   at (0,1);
\coordinate (B1)   at (2,3);
\coordinate (B)    at (2,0);
\coordinate (C)    at (0,2);
\coordinate (D)    at (2,2);
\coordinate (E)    at (0,4);
\coordinate (F)    at (2,4);
\draw (A) -- (B) -- (F) -- (E) -- (A);
\draw (D) -- (C);
\draw[{Latex}-, purple, line width=2pt] (B1) -- (F);
\draw[{Latex}-, purple, line width=2pt] (F) -- (E);
\draw[-{Latex}, purple, line width=2pt] (A1) -- (E);
\draw[{Latex}-, purple, line width=2pt] (B2) -- (C);
\draw[{Latex}-, purple, line width=2pt] (C) -- (D);
\draw[{Latex}-, purple, line width=2pt] (D) -- (A2);
\node[left] at (A) {$5$};
\node[right] at (B) {$6$};
\node[right] at (D) {$4$};
\node[right] at (F) {$2$};
\node[left] at (E) {$1$};
\node[left] at (C) {$3$};
\node[left, red] at (A1) {$A_1$};
\node[right, red] at (A2) {$A_2$};
\node[left, blue] at (B2) {$B_2$};
\node[right, blue] at (B1) {$B_1$};
\foreach \p in {A, B, C, D, E, F} \fill (\p) circle (2pt);
\foreach \p in {A1, A2} \fill[red] (\p) circle (2pt);
\foreach \p in {B1, B2} \fill[blue] (\p) circle (2pt);
\end{scope}

% 4
\begin{scope}[shift={(0,-5.5)}]
\coordinate (A)   at (0,0);
\coordinate (B2)  at (0,1);
\coordinate (B1)  at (2,3);
\coordinate (B)   at (2,0);
\coordinate (C)   at (0,2);
\coordinate (D)   at (2,2);
\coordinate (E)   at (0,4);
\coordinate (F)   at (2,4);
\node[left] at (A) {$5$};
\node[right] at (B) {$6$};
\node[right] at (D) {$4$};
\node[right] at (F) {$2$};
\node[left] at (E) {$1$};
\node[left] at (C) {$3$};
\node[left, blue] at (B2) {$B_2$};
\node[right, blue] at (B1) {$B_1$};
\draw (B) -- (A) -- (C) -- (D) -- (F) -- (E);
\draw[-{Latex}, green!70!black, line width=2pt] (E) -- ($(E)!0.5!(F)$);
\draw[{-Latex}, green!70!black, line width=2pt] (F) -- ($(F)!0.5!(D)$);
\draw[-{Latex}, green!70!black, line width=2pt] (D) -- ($(D)!0.5!(C)$);
\draw[-{Latex}, green!70!black, line width=2pt] (C) -- ($(C)!0.5!(A)$);
\draw[-{Latex}, green!70!black, line width=2pt] (A) -- ($(A)!0.5!(B)$);
\foreach \p in {A, B, C, D, E, F} \fill (\p) circle (2pt);
\foreach \p in {B1, B2} \fill[blue] (\p) circle (2pt);
\end{scope}

% 5
\begin{scope}[shift={(5,-5.5)}]
\coordinate (A)   at (0,0);
\coordinate (A1)  at (0,3);
\coordinate (A2)  at (2,1);
\coordinate (B)   at (2,0);
\coordinate (C)   at (0,2);
\coordinate (D)   at (2,2);
\coordinate (E)   at (0,4);
\coordinate (F)   at (2,4);
\node[left] at (A) {$5$};
\node[right] at (B) {$6$};
\node[right] at (D) {$4$};
\node[right] at (F) {$2$};
\node[left] at (E) {$1$};
\node[left] at (C) {$3$};
\node[left, red] at (A1) {$A_1$};
\node[right, red] at (A2) {$A_2$};
\draw (A) -- (B) -- (D) -- (C) -- (E) -- (F);
\draw[-{Latex}, orange!90!black, line width=2pt] ($(E)!0.5!(F)$) -- (F);
\draw[-{Latex}, orange!90!black, line width=2pt] ($(E)!0.5!(C)$) -- (E);
\draw[-{Latex}, orange!90!black, line width=2pt] ($(C)!0.5!(D)$) -- (C);
\draw[-{Latex}, orange!90!black, line width=2pt] ($(D)!0.5!(B)$) -- (D);
\draw[-{Latex}, orange!90!black, line width=2pt] ($(A)!0.5!(B)$) -- (A);
\foreach \p in {A, B, C, D, E, F} \fill (\p) circle (2pt);
\foreach \p in {A1, A2} \fill[red] (\p) circle (2pt);
\end{scope}

% 6
\begin{scope}[shift={(10,-5.5)}]
\node[above] at (1,4.2) {$\vec D(G)$};
\coordinate (A)   at (0,0);
\coordinate (A1)  at (0,3);
\coordinate (A2)  at (2,1);
\coordinate (B2)  at (0,1);
\coordinate (B1)  at (2,3);
\coordinate (B)   at (2,0);
\coordinate (C)   at (0,2);
\coordinate (D)   at (2,2);
\coordinate (E)   at (0,4);
\coordinate (F)   at (2,4);
\node[left] at (A) {$5$};
\node[right] at (B) {$6$};
\node[right] at (D) {$4$};
\node[right] at (F) {$2$};
\node[left] at (E) {$1$};
\node[left] at (C) {$3$};
\node[left, red] at (A1) {$A_1$};
\node[right, red] at (A2) {$A_2$};
\node[left, blue] at (B2) {$B_2$};
\node[right, blue] at (B1) {$B_1$};
\draw (A) -- (B) -- (F) -- (E) -- (A);
\draw (D) -- (C);
\draw[-{Latex}, orange!90!black, line width=2pt] ($(E)!0.5!(F)$) -- (F);
\draw[-{Latex}, orange!90!black, line width=2pt] ($(E)!0.5!(C)$) -- (E);
\draw[-{Latex}, orange!90!black, line width=2pt] ($(C)!0.5!(D)$) -- (C);
\draw[-{Latex}, orange!90!black, line width=2pt] ($(D)!0.5!(B)$) -- (D);

\draw[-{Latex}, green!70!black, line width=2pt] (E) -- ($(E)!0.5!(F)$);
\draw[{-Latex}, green!70!black, line width=2pt] (F) -- ($(F)!0.5!(D)$);
\draw[-{Latex}, green!70!black, line width=2pt] (D) -- ($(D)!0.5!(C)$);
\draw[-{Latex}, green!70!black, line width=2pt] (C) -- ($(C)!0.5!(A)$);

\draw[-{Latex}, green!70!black, line width=2pt] (A) to[out=40, in=140] ($(A)!0.5!(B)$);
\draw[-{Latex}, orange!90!black, line width=2pt] ($(A)!0.5!(B)$) to[out=220, in=-40] (A);
\foreach \p in {A, B, C, D, E, F} \fill (\p) circle (2pt);
\foreach \p in {A1, A2} \fill[red] (\p) circle (2pt);
\foreach \p in {B1, B2} \fill[blue] (\p) circle (2pt);
\end{scope}

\end{circuitikz}
 \caption{Top: A valid graph $G$, the forest $H(G)$, and the connection $\alpha(G)$. Bottom: The construction of an oriented double-dimer configuration $\vec D(G)$ with sources $A$ and sinks $B$ on $G' \setminus \{ 6 \}$. See the proof of Lemma~\ref{lemma:bijection}.
 }
 \label{fig:double_dimer}
 \end{figure}

\begin{lemma}[Modified Temperley correspondence] \label{lemma:bijection}
(See
Fig.~\ref{fig:double_dimer}.) For any subgraph $G$ valid relative to both
$A$ and $B$ separately, there %exists
is a unique oriented double-dimer configuration $\vec D(G)$ on $G' \setminus \{ v_n \}$ with sources~$A$ and sinks $B$ and a unique valid spanning forest $H(G)$ on $G_A^B$ with sources $A$ and sinks $B$.
Moreover, $\alpha(\vec D(G)) = \alpha(H(G))$, $\overline{H(G)}=G$, and \mscommnew{each %midpoint
$B_i\in B\!\setminus\! A$
is the unique common point of the path of $\alpha(\vec D(G))$ ending at $B_i$ and the path from $B_i$ to $v_n$ in~%the tree
$G'\!\setminus\! A$.}
%%%
%each path of $\alpha(\vec D(G))$ joining the midpoints of two distinct edges $a$ and $b$, followed by the path from the midpoint of $b$ to $v_n$ in the spanning tree $G'\setminus A$, is again a simple path.
\end{lemma}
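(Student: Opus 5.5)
The plan is to read everything off the two spanning trees that validity provides. Let $T_A:=G\setminus A$ and $T_B:=G\setminus B$ be the edge sets obtained by deleting the edges with midpoints in $A$ and in $B$, respectively. By validity relative to $A$ and to $B$ separately, both are spanning trees of $G$. Root both at $v_n$. The steps are: (1) the two dimer configurations, (2) their superposition $\vec D(G)$, (3) the forest $H(G)$, (4) the comparison of connections together with the final intersection claim.

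\emph{Step 1: the two dimer configurations.} The graph $G'\setminus(\{v_n\}\cup A)$ consists of the $n-1$ vertices other than $v_n$ and the $n-1$ midpoints of the edges of $T_A$. Every half-edge in it joins a vertex to such a midpoint. I claim the only perfect matching matches each vertex $x\ne v_n$ with the midpoint of the edge of $T_A$ joining $x$ to its parent. The proof is the classical Temperley leaf-stripping induction. A leaf $x\ne v_n$ of $T_A$ has only one available midpoint, so it must take it. Deleting $x$ and that midpoint leaves the same situation for a smaller tree. Orienting these half-edges towards midpoints gives the first configuration.

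Symmetrically, on $G'\setminus(\{v_n\}\cup B)$ with the tree $T_B$, each midpoint of an edge of $T_B$ must be matched to the endpoint of that edge farther from $v_n$ (again by leaf-stripping). Orienting from midpoints gives the second configuration.

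\emph{Step 2: the double-dimer configuration.} Define $\vec D(G)$ as the union of the two oriented configurations, keeping doubled half-edges as trivial $2$-cycles. The degree conditions can then be checked directly:
\begin{itemize}
\item each vertex $\ne v_n$ receives one outgoing half-edge from the first configuration and one incoming half-edge from the second;
\item a midpoint in $A\setminus B$ appears only in the second configuration, with one outgoing half-edge;
\item a midpoint in $B\setminus A$ appears only in the first configuration, with one incoming half-edge;
\item midpoints in $A\cap B$ appear in neither;
\item the remaining midpoints have one incoming and one outgoing half-edge.
\end{itemize}
For uniqueness, note that any oriented double-dimer configuration with sources $A$ and sinks $B$ splits into its vertex-to-midpoint half-edges and its midpoint-to-vertex half-edges. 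These are perfect matchings of exactly the two graphs in Step 1, so each is unique.

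\emph{Step 3: the forest.} If $H$ is a valid forest with $\overline H=G$, then removing the half-edges at $A$ from $H$ and restoring the missing half-edges at $B$ recovers $T_A$. Hence $H$ is obtained from $T_A$, subdivided at the midpoints of $B$, in two moves:
\begin{itemize}
\item delete one half-edge at each $B_i\in B\setminus A$;
\item attach one half-edge at each $A_i\in A\setminus B$.
\end{itemize}
The deleted half-edge at $B_i$ is forced to be the one pointing towards $v_n$ in $T_A$. Otherwise $B_i$ would remain in the component of $v_n$, which must contain no sinks. After these deletions, $T_A$ splits into the root component and $|B\setminus A|$ components, one hanging below each $B_i$. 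The half-edge at $A_i$ must then join $A_i$ to one of these sink components, so that each component receives exactly one source.

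I expect the main obstacle here: showing that exactly one choice of half-edges at the sources achieves this and keeps the completion valid relative to $B$. My first attempt is to define the choice by the half-edge that $\vec D(G)$ uses out of $A_i$. Following the $\vec D(G)$-path from $A_i$, it alternates a half-edge of $T_B$ away from $v_n$ with a half-edge of $T_A$ towards $v_n$. I would show that this path never crosses a deleted half-edge, so it ends at the sink of the component it entered. This gives existence of $H(G)$ together with $\alpha(H(G))=\alpha(\vec D(G))$ and $\overline{H(G)}=G$. For uniqueness, I would argue that any other choice creates either a component with two sources or a cycle in $T_B\cup\{b\text{-edges}\}$ reachable from the sources, contradicting validity relative to $B$. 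The same inductive ``peeling'' as in Step 1 should handle this: strip leaves of $T_B$ and track sources along the way.

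\emph{Step 4: the final claim.} The path of $\alpha(\vec D(G))$ ending at $B_i\in B\setminus A$ enters $B_i$ along its $T_A$ half-edge pointing away from $v_n$, because it arrives from the component hanging below $B_i$. It lies entirely in that component. The path from $B_i$ to $v_n$ in $G'\setminus A$ leaves through the deleted half-edge and stays outside the component. So the two paths meet only at $B_i$.
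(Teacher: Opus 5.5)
Your Steps 1, 2 and 4 are sound and essentially follow the paper. In Step 4 you compare ancestors and descendants of $B_i$ in $T_A$, where the paper uses that the distance to $v_n$ decreases. The existence half of Step 3 also works, even though it differs from the paper's half-edge-flip construction. Between its first and last half-edges, the $\vec D(G)$-path from $A_j$ uses only halves of edges of $G\setminus(A\cup B)$. Its last half-edge enters some $B_i$ from the endpoint farther from $v_n$ in $T_A$. So $A_j$ is attached to the component hanging below $B_i$, and since distinct paths end at distinct sinks, this is a bijection.

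\textbf{The gap: uniqueness of the valid forest.} The lemma asserts this uniqueness, and your proposal does not establish it. Every forest of your shape has completion $G$, so validity is automatic. The whole content is that only one choice of half-edges at the sources puts exactly one source in each sink component and none in the component of $v_n$. Your intended contradiction, ``a cycle in $T_B\cup\{b\text{-edges}\}$'', is no contradiction: that graph is $G$ itself, which has cycle rank $k$. Likewise, what must be peeled is not $T_B$ but a contraction of it, described below. A smaller issue: your stated reason for the forced deletions at $B$ is wrong. If the half-edge at $B_i$ pointing away from $v_n$ is deleted, $B_i$ may land in a component with another sink rather than with $v_n$. The conclusion survives: take such a $B_i$ deepest in $T_A$; the component just below it then contains neither $v_n$ nor a sink.

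\textbf{The missing idea.} After removing the midpoints $B_i$, your components $C_0\ni v_n, C_1,\dots,C_m$ are exactly the components of the forest $G\setminus(A\cup B)$. Validity relative to $B$ says that $G\setminus B$, which is this forest plus the source edges $a_j$, is a tree. Hence the source edges form a spanning tree on the nodes $C_0,\dots,C_m$. Root this tree at $C_0$ and send each edge to one of its ends so that every non-root node is hit exactly once and the root never. The only way to do this is ``each edge to its child'': peel leaves of this contracted tree. Equivalently, two admissible choices would differ along an alternating cycle of source edges, i.e.\ a cycle in $G\setminus B$.

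\textbf{Alternative: the paper's route.} This avoids the analysis entirely.
\begin{itemize}
\item From any valid $H$, put both Temperley matchings on each tree of $H\setminus\alpha(H)$, rooted at its vertex on $\alpha(H)$ or at $v_n$.
\item Orient the subdivided paths of $\alpha(H)$ from sources to sinks.
\item The result is an oriented double-dimer configuration with sources $A$ and sinks $B$, hence equals $\vec D(G)$ by your Step 2.
\item $H$ is determined by its half-edges at $A\cup B$, which are the end half-edges of the paths of $\vec D(G)$.
\end{itemize}
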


\begin{proof}

 Since \(G\) is valid relative to \(A\), the subgraph \(G \setminus A\) is a spanning tree in the network. There is a unique oriented dimer configuration directed towards midpoints on \(G' \setminus (A \cup \{v_n\})\).
 It is constructed by choosing $v_n$ as the root in the tree  $G' \setminus A$ and connecting each vertex in \(G' \setminus (A \cup \{v_n\})\) to a midpoint in the direction towards the root. The uniqueness holds because the dimer configuration is uniquely reconstructed, starting from the edges emanating from the root.
 Similarly, we construct a unique oriented dimer configuration directed from midpoints on \(G' \setminus (B \cup \{v_n\})\).
 The union $\vec D(G)$ of these oriented dimer configurations is the desired oriented double-dimer configuration $\vec D(G)$ on $G' \setminus \{ v_n \}$ with sources $A$ and sinks $B$ (see Fig.~\ref{fig:double_dimer}).

Clearly, any oriented double-dimer configuration on $G' \setminus \{ v_n \}$ with sources $A$ and sinks $B$ splits into two oriented dimer configurations directed towards and from midpoints on \(G' \setminus (A \cup \{v_n\})\) and \(G' \setminus (B \cup \{v_n\})\), respectively.
Hence, the oriented double-dimer configuration is unique.
%because both oriented dimer configurations are unique.

Now, we construct a valid spanning forest $H(G)$ with sources \(A\) and sinks \(B\).
Denote $\alpha' = \alpha'(\vec D(G))$ the set of oriented paths from sources to sinks in $\vec D(G)$, so that suppression of all midpoints in $\alpha'$ except for $A$ and $B$ and adding trivial paths for vertices in \( A \cap B \) gives $\alpha(\vec D(G))$.

Denote by \(\vec{H}(G)\) the directed subgraph obtained from \(\vec D(G)\) by adding vertices in \( A \cap B \) and by replacing each oriented half-edge, which is not contained in \(\alpha'\) and is directed from a midpoint to a vertex, with its half-edge flip. See Fig.~\ref{fig:half_edge_flip}.

We then obtain $H(G)$ by forgetting the orientation of all half-edges in $\vec{H}(G)$ and suppressing all midpoints %in $\alpha'$
except for those in \(A\) and \(B\).

The subgraph $H(G)$ is well-defined, that is, both half-edges emanating from each midpoint except for those in \(A\) and \(B\) are contained in \(\vec{H}(G)\) with some orientation. Indeed, since $G$ is valid relative to \(A\), it follows that \(G' \setminus \alpha'\) is acyclic, hence the half-edges in \(\vec D(G) \setminus \alpha'\) come in pairs: one half-edge from a vertex \(x\) to a midpoint \(y\), and the other from the midpoint \(y\) to the vertex \(x\). The half-edge flip takes each pair to a complete undirected edge in $\vec{H}(G)$.
%\(H(G)\), making the suppression of midpoints not in \(A \cup B\) possible.
%, which we replace with its half-edge flip in $\vec{H}(G)$. Thus, each such pair corresponds to a complete undirected edge in \(H(G)\), making the suppression of midpoints not in \(A \cup B\) possible.

Let us show that \(H(G)\) is a valid spanning forest with sources \(A\) and sinks \(B\).

In $\vec{H}(G)$, each vertex or midpoint, except for those in \(B \cup \{v_n\}\), has exactly one outgoing half-edge, while vertices or midpoints in
$B \cup \{v_n\}$ have none. Hence each component either contains a vertex of $B \cup \{v_n\}$, so all directed paths in it terminate there, or else contains a directed cycle. The latter is impossible, since such a cycle would consist entirely of edges outside $\alpha'$, and therefore lie in
$G' \setminus \alpha'% \subseteq G' \setminus (A \cup B)
$, which is acyclic.

Therefore every component contains exactly one vertex of $B \cup \{v_n\}$ and is a tree directed toward it. By construction, each component containing a sink in $B$ also contains the corresponding source in $A$.
Thus, \(H(G)\) is a valid spanning forest with sources \(A\) and sinks
\(B\), and by construction, $\alpha(\vec D(G))=\alpha(H(G))$ and $\overline{H(G)}=G$.

Now, we prove that a valid spanning forest \(H\) with sources $A$ and sinks $B$ is unique, by constructing an oriented double-dimer configuration with sources $A$ and sinks $B$ from \(H\).

Consider the edges in \(H\) that are not contained in \(\alpha(H)\). They form a forest, and the roots of its trees can be chosen among the vertices in \(\alpha(H)\) and the vertex \(v_n\). For each such tree \(T\) with a root \(r\), we construct two oriented dimer configurations on \(T' \setminus \{r\}\): one directed towards midpoints and one directed from midpoints.

Then, after subdividing all edges in \(\alpha(H)\) and orienting the resulting half-edges from sources to sinks, we %combine these oriented edges with the previously constructed dimer configurations to
obtain an oriented double-dimer configuration with sources \(A\) and sinks \(B\) on \(G' \setminus \{v_n\}\). Since the latter is unique, it follows that \(H\) is unique.

It remains to prove the `moreover' part. %last assertion.
Orient the tree $G\setminus A$ towards $v_n$, and let $\alpha_i$ be the path of $\alpha(\vec D(G))$ ending at $B_i$.
%joining the midpoints of two distinct edges $a$ and $b$.
By the above construction, $\alpha_i$ leaves each of its vertices along the edge directed towards $v_n$ in $G\setminus A$; being simple, it does not turn back at a midpoint. Hence each half-edge of $\alpha_i$, except the first one, is directed towards $v_n$ in $G'\setminus A$, and the same is true of the path from $B_i$ %the midpoint of $b$
to $v_n$. Along such half-edges the graph distance to $v_n$ decreases, so
$B_i$ is the unique common point of the two paths.
% the concatenation of the two paths visits each point at most once, the midpoint of $a$ lying outside $G'\setminus A$. The concatenation is therefore a simple path.
\end{proof}

\begin{remark}
The construction of $\vec D(G)$ in the proof of Lemma \ref{lemma:bijection} is a modification of the classical Temperley correspondence \cite{Temperley1974, KPW} and its surface analogs \cite{sun2016, ber2024}. Recall that, in the classical setting, a spanning tree rooted at $v_n$ is oriented toward the root, and each non-root vertex is matched to the midpoint of the edge pointing toward its parent. This produces an oriented dimer configuration on the subdivided graph, %In our setting, since $G$ is valid relative to $A$, it contains all edges with midpoints in $A$, and $G\setminus A$ is a spanning tree. Applying the same rule to this tree gives an oriented dimer configuration directed toward midpoints. Similarly, since $G$ is valid relative to $B$, the graph $G\setminus B$ is a spanning tree, and the analogous rule gives an oriented dimer configuration directed from midpoints. The union of these two configurations is the oriented double-dimer configuration $\vec D(G)$. Its oriented paths from the sources $A$ to the sinks $B$ determine the connection $\alpha(G)$.
%
%Note that the classical Temperley correspondence makes use both of the graph $G$ and its dual $G^*$.
and then the same construction is applied to the dual graph. Our construction does not use the dual graph, instead we use two rooted trees $G\setminus A$ and $G\setminus B$ to produce an oriented double dimer configuration on the same graph. This is a non-planar/non-surface version of Temperley's trick and %a generalization of
the ``gliding'' from \cite{KPW,Byun-Ciucu-24}.

%\begin{remark} \label{remark:bijection}
Beware that the relation to Temperley's bijection is the method of the proof; the lemma itself does not generalize to a nontrivial bijection.
For a graph $G$ that is non-valid relative to $A$ or $B$,
the oriented double-dimer configurations on $G' \setminus \{ v_n \}$ with sources $A$ and sinks $B$ are not necessarily in bijection with
valid spanning forests on $G_A^B$ with sources $A$ and sinks $B$.
\end{remark}

\begin{notation*}
    Set $\alpha(G):=\alpha(H(G)) = \alpha(\vec D(G))$ under the notation of Lemma~\ref{lemma:bijection}.
\end{notation*}

The following result follows directly from %the combination of
Lemmas~\ref{lemma:valid_forests} and~\ref{lemma:bijection}.

\begin{corollary}\label{valid_graphs}
We have
$$F(A\leftrightarrow B) = \sum_{G} \mathrm{sgn}_A^B(\alpha(G)) c(G),$$
where the sum is over subgraphs $G$ that are valid relative to both \(A\) and \(B\) separately.
\end{corollary}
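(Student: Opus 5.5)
We start from Lemma~\ref{lemma:valid_forests}, which restricts the sum defining $F(A\leftrightarrow B)$ to valid spanning forests $H$ with sources $A$ and sinks $B$:
$$F(A\leftrightarrow B)=\sum_{H\text{ valid}}\mathrm{sgn}_A^B(\alpha(H))\,c(\overline H).$$
Our plan is to reindex this sum by the completion $G=\overline H$. The whole argument then reduces to showing that the map $H\mapsto\overline H$ is a bijection from valid spanning forests with sources $A$ and sinks $B$ onto subgraphs $G\subset S$ that are valid relative to both $A$ and $B$ separately.

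\emph{The map lands in the right set.} By the definition of a valid forest, $\overline H$ is valid relative to both $A$ and $B$. So the completion map sends valid forests into the index set of the corollary.

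\emph{Surjectivity.} Given $G$ valid relative to both $A$ and $B$, Lemma~\ref{lemma:bijection} produces a valid spanning forest $H(G)$ on $G_A^B$ with $\overline{H(G)}=G$. We regard $H(G)$ as a forest in $S_A^B$, with $S$ playing the role of the ambient graph. The forest conditions (one source–sink pair per component, the $v_n$-component free of sources and sinks, the prescribed degrees at $A\cup B$) depend only on $H$ itself. Since $H$ spans all vertices, it remains a spanning forest of the ambient graph.

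\emph{Injectivity.} Suppose $H$ is any valid forest with $\overline H=G$. Then $H$ lies inside $G_A^B$, because completion only adds half-edges at $A\cup B$ and suppresses midpoints, so $H$ uses no edges outside $G$. It is therefore a valid spanning forest on $G_A^B$. The uniqueness statement of Lemma~\ref{lemma:bijection} then forces $H=H(G)$.

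\emph{Matching the summands.} The term for $H(G)$ is $\mathrm{sgn}_A^B(\alpha(H(G)))\,c(G)$. By the notation introduced after Lemma~\ref{lemma:bijection}, this equals $\mathrm{sgn}_A^B(\alpha(G))\,c(G)$, which is the claimed summand.

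The only delicate point is the identification used in both surjectivity and injectivity. Lemma~\ref{lemma:bijection} is stated on $G_A^B$, while Lemma~\ref{lemma:valid_forests} ranges over forests in the ambient network. We need to check that a forest with completion $G$ is exactly a forest on $G_A^B$ and that validity means the same thing in both settings. Everything else is direct substitution.
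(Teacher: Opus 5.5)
Your proposal is correct and follows the paper's route. The paper simply states that the corollary follows directly from Lemmas~\ref{lemma:valid_forests} and~\ref{lemma:bijection}, because $H\mapsto\overline H$ is a bijection between valid spanning forests in $S_A^B$ and subgraphs valid relative to both $A$ and $B$, which is exactly your argument. Your explicit check that a forest with completion $G$ lies in $G_A^B$, and that $G_A^B$ contains every vertex of $S_A^B$ because $G\setminus A$ is a spanning tree, just spells out what the paper leaves implicit.
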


Interestingly, despite the statement of Corollary~\ref{valid_graphs} does not involve any dimers, the only proof we know uses modified Temperley's correspondence.

\section{Total parafermionic observable}
\label{sec-conclusion}

In this section, we conclude the proof of Theorem~\ref{th-all-minor-matrix-tree} by expressing the total parafermionic observable in terms of the multiplicities introduced before the theorem.

We need an auxiliary notion.
Let $\Gamma = (\gamma_1, \ldots, \gamma_k)$ be a basis of simple cycles in \( G \), let \( A\) be an ordered set of midpoints in $G$, let \( (a_1, \ldots, a_{k}) \) be the edges with midpoints in \( A \) in order.

 If $\pi\in \mathcal{S}_{k}$ is a permutation such that $a_j\subset \gamma_{\pi(j)}$ for $j=1,\dots,k$, then denote by
\(\vec{a}_j\) the edge \( a_j \) oriented consistently with the orientation of the cycle \( \gamma_{\pi(j)} \).
Then define
\[
\mathrm{sgn}_A(\Gamma) := \sum_{\pi\in \mathcal{S}_{k}:a_j\subset \gamma_{\pi(j)}\text{ for all }j}
    \mathrm{sgn}(\pi) \prod_{j=1}^k \langle \vec{a}_j, a_j \rangle.
\]

We shall see that $\mathrm{sgn}_A(\Gamma)\in \{0,\pm 1\}$, once $G$ is valid with respect to $A$.

\begin{lemma}\label{sign_lemma}
For any subgraph $G$ valid relative to both $A$ and $B$ separately, %\subset\{1,\dots, b\}$,
there is a
%any
basis $\Gamma$ of simple cycles of $G$ such that %we have
$\mathrm{sgn}_A^B(\alpha(G))$ = $\mathrm{sgn}_A( \Gamma)\mathrm{sgn}_B(\Gamma)$.
\end{lemma}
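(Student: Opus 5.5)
The plan is to first show that the product $\mathrm{sgn}_A(\Gamma)\mathrm{sgn}_B(\Gamma)$ does not depend on the basis $\Gamma$ at all. Then I would compute it for one convenient basis, namely the fundamental cycles of the spanning tree $G\setminus A$.

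\emph{Step 1 (determinantal form).} Since every $\gamma_l$ is a simple cycle, it passes through an edge $e$ at most once. Hence $\langle\vec a_j,a_j\rangle=\langle\gamma_{\pi(j)},a_j\rangle$ whenever $a_j\subset\gamma_{\pi(j)}$, and the defining sum of $\mathrm{sgn}_A(\Gamma)$ is exactly the Leibniz expansion
$$\mathrm{sgn}_A(\Gamma)=\det\bigl(\langle\gamma_l,a_j\rangle\bigr)_{j,l=1}^k=:\det N_A(\Gamma),$$
and likewise $\mathrm{sgn}_B(\Gamma)=\det N_B(\Gamma)$.

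Passing to another basis $\Gamma'$ of $H_1(G;\mathbb Z)$ multiplies both $N_A$ and $N_B$ on the right by the same unimodular matrix $C$. Therefore $\det N_A\det N_B$ changes by $(\det C)^2=1$. So it suffices to find one basis $\Gamma$ with $\det N_A(\Gamma)\det N_B(\Gamma)=\mathrm{sgn}_A^B(\alpha(G))$. This also shows $\mathrm{sgn}_A(\Gamma)\in\{0,\pm1\}$ once $G$ is valid relative to $A$.

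\emph{Step 2 (choice of basis).} Take $\gamma_l$ to be the fundamental cycle of $a_l$ with respect to the tree $G\setminus A$. Orient $\gamma_l$ along the edge $\vec a_l$, where $\vec a_l$ is $a_l$ oriented along the path of $\alpha(G)$ starting at $A_l$. Then $N_A(\Gamma)=\mathrm{diag}(\langle\vec a_l,a_l\rangle)$, so
$$\det N_A=\prod_l\langle\vec a_l,a_l\rangle.$$
It then remains to prove
$$\det\bigl(\langle\gamma_l,b_j\rangle\bigr)=\mathrm{sgn}(\tau)\prod_i\langle\vec b_i,b_i\rangle .$$
Here $\langle\gamma_l,b_j\rangle\neq0$ means that $b_j$ lies on the tree path in $G\setminus A$ joining the endpoints of $a_l$.

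\emph{Step 3 (using the Temperley structure).} By the `moreover' part of Lemma~\ref{lemma:bijection} and its proof, the path $\alpha_l$ of $\alpha(G)$ is built as follows. It leaves $A_l$ through one half of $a_l$, then descends in $G\setminus A$ toward $v_n$, and stops at the first midpoint of $B$ that it meets, which is $B_{\tau(l)}$. Consequently $b_{\tau(l)}\subset\gamma_l$, traversed in the direction $\vec b_{\tau(l)}$. So the entries of the permuted diagonal $\tau$ multiply to $\prod_i\langle\vec b_i,b_i\rangle$.

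To kill the other terms of the determinant, I would argue by induction on $k$. Pick an index $l$ whose path $\alpha_l$ ends at a $b_{\tau(l)}$ that is closest to $v_n$ in the tree $G\setminus A$. Add the cycle $\pm\gamma_l$ to every other $\gamma_{l'}$ containing $b_{\tau(l)}$; this is a unimodular change that leaves $N_A$ diagonal up to a triangular correction. The aim is to make $b_{\tau(l)}$ appear only in $\gamma_l$. Then I would delete $a_l$ and contract $b_{\tau(l)}$ (or reduce via $G\setminus b_{\tau(l)}$), producing a smaller valid graph on which the remaining paths of $\alpha(G)$ are unchanged, and expand the determinant along that row.

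Alternatively, I could use the symmetric characterization: $N_B$ is invertible precisely because $G\setminus B$ is a tree. Its determinant is then the sign of the bijection between $B$-edges and $A$-edges induced by the fundamental cycles, and identifying this bijection with $\tau$ amounts to a Lindström-type non-crossing argument along the oriented tree.

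The main obstacle is Step 3. The difficulty is showing that, after the reduction, no other permutation survives and that the orientation signs agree, since the tree paths from the two endpoints of $a_l$ may share several $B$-edges. I would first attempt the induction, removing the pair whose sink is nearest $v_n$. The fact from Lemma~\ref{lemma:bijection} that $B_i$ is the unique common point of $\alpha_i$ and the root path from $B_i$ is exactly what should guarantee that the row of $b_{\tau(l)}$ can be cleared.
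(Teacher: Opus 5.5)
Your Step~1 is fine; it is the same determinant identity and basis-invariance the paper uses later. With fundamental cycles of $G\setminus A$ you also correctly get $\det N_A(\Gamma)=\prod_l\langle\vec a_l,a_l\rangle$.

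The gap is the inference in Step~3 that $b_{\tau(l)}\subset\gamma_l$. Your description of $\alpha_l$ is right. But the descent from the endpoint $x_0$ of $a_l$ toward $v_n$ in $G\setminus A$ can pass the vertex where the fundamental cycle of $a_l$ branches off toward $v_n$ before it meets any midpoint of $B$. In that case $b_{\tau(l)}$ lies off $\gamma_l$.

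Concretely, take vertices $v_n,c,x_0,x_1$ and edges $b_1=v_nc$, $f=cx_0$, $b_2=cx_1$, $a_1=x_0x_1$, $a_2=v_nx_1$. Both $G\setminus A=\{b_1,f,b_2\}$ and $G\setminus B=\{f,a_1,a_2\}$ are spanning trees. The double-dimer configuration of Lemma~\ref{lemma:bijection} gives $\alpha_1\colon A_1\to x_0\to c\to B_1$ and $\alpha_2\colon A_2\to x_1\to B_2$, so $\tau=\mathrm{id}$. However, the fundamental cycle of $a_1$ is $\{a_1,f,b_2\}$, which does not contain $b_{\tau(1)}=b_1$. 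So the $\tau$-diagonal of $N_B(\Gamma)$ contains a zero, and the only nonzero Leibniz term of $\det N_B(\Gamma)$ is the transposition.

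Your induction cannot start either. Here $b_1$ is the sink nearest $v_n$, yet $\gamma_1\not\ni b_1$, so its row cannot be cleared by adding $\pm\gamma_1$. The target identity $\det N_B(\Gamma)=\mathrm{sgn}(\tau)\prod_i\langle\vec b_i,b_i\rangle$ is still true, but only as a consequence of the lemma together with basis-invariance, not by your mechanism. Likewise, the Lindstr\"om-type alternative presupposes that the fundamental cycles induce the bijection $\tau$, which this example refutes.

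What is missing is a basis adapted to both trees at once. The paper closes $\alpha_l$ with two root paths, cut at their first common point $w$: the path $\beta_{\tau(l)}$ from $B_{\tau(l)}$ to $v_n$ in $G'\setminus A$, and the path $\eta_l$ from $A_l$ to $v_n$ in $G'\setminus B$. By the `moreover' part of Lemma~\ref{lemma:bijection}, applied to $(A,B)$ and to $(B,A)$, the result is a simple cycle through $a_l$ and $b_{\tau(l)}$ in the directions $\vec a_l$ and $\vec b_{\tau(l)}$. In the example this gives $\{a_1,f,b_1,a_2\}$ instead of $\{a_1,f,b_2\}$.

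These cycles may contain other $A$-edges (via $\eta_l$) and other $B$-edges (via $\beta_{\tau(l)}$), so neither pairing matrix is diagonal. However, the relation ``$b_i>b_j$ iff $\beta_i\ni b_j$'' is acyclic, and so is its analogue for the $A$-edges defined via the $\eta$'s. Peeling off maximal elements shows that each pairing matrix has exactly one nonzero Leibniz term: the identity for $A$ and $\tau^{-1}$ for $B$. The same peeling argument gives linear independence, hence a basis.
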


\begin{proof}

With the notation introduced before~\eqref{eq-connection-sign},
we are going to construct
a basis \(\Gamma = (\gamma_1, \ldots, \gamma_k)\) such that the path from the midpoint of \(a_i\) to the midpoint of \(b_{\tau(i)}\) is contained in the simple cycle \(\gamma_i\) and is oriented in the same direction.

For each \(i=1,\dots,k\), denote by \(\alpha_i\) the path of \(\alpha(G)\) from the
midpoint of \(a_i\) to the midpoint of \(b_{\tau(i)}\); if \(a_i=b_{\tau(i)}\),
this path consists of a single midpoint. %is the one-vertex path at the common midpoint.

Since \(G\) is valid relative to \(A\) and to \(B\), the graphs \(G'\setminus A\) and \(G'\setminus B\)
%\(T_A:=G\setminus A\) and \(T_B:=G\setminus B\)
are %spanning
trees containing all the vertices of $G$.
If \(a_i\ne b_{\tau(i)}\), then let
\(\beta_{\tau(i)}\) be the simple path in \(G'\setminus A\)  %\(T_A'\)
from the midpoint of \(b_{\tau(i)}\)
to \(v_n\), and \(\eta_i\) be the simple path in \(G'\setminus B\) %\(T_B'\)
from the midpoint of \(a_i\) to \(v_n\). If \(a_i=b_{\tau(i)}\), %this midpoint lies in neither tree; in this case let \(\beta_{\tau(i)}\) and \(\eta_i\) begin with the half-edges \(\vec b_{\tau(i)}\) and against \(\vec a_i\),
then let \(\beta_{\tau(i)}\) and \(\eta_i\) begin with the half-edges in the direction of $b_{\tau(i)}$ and opposite to it, respectively,
and continue with the simple paths in
%\(T_A'\) and \(T_B'\)
\(G'\setminus A\) and \(G'\setminus B\)
to \(v_n\). %Both paths are simple, being paths in trees. Besides,
The path \(\beta_{\tau(i)}\) contains no midpoint of \(A\) and \(\eta_i\) contains no midpoint of \(B\), except possibly their
starting points.

By Lemma~\ref{lemma:bijection}, the concatenation of \(\alpha_i\) with
\(\beta_{\tau(i)}\) is a simple path. After interchanging \(A\) and \(B\), which
reverses the paths of \(\alpha(G)\), the same lemma shows that the concatenation
of \(\eta_i\) reversed with \(\alpha_i\) is again a simple path. % as well.
Thus
\(\beta_{\tau(i)}\) and~\(\eta_i\) meet \(\alpha_i\) only at the midpoints of~\(b_{\tau(i)}\) and \(a_i\), respectively. For \(a_i=b_{\tau(i)}\), the same holds as the common midpoint lies in neither tree.

Both \(\beta_{\tau(i)}\) and \(\eta_i\) end at \(v_n\), hence they have a common
point distinct from their starting points; let \(w\) be the first such point, a
vertex or a midpoint, along \(\beta_{\tau(i)}\). (The starting points have to be
excluded because they coincide when \(a_i=b_{\tau(i)}\).)
Concatenating \(\alpha_i\), the arc of \(\beta_{\tau(i)}\) from the midpoint of
\(b_{\tau(i)}\) to \(w\), and the arc of \(\eta_i\) from \(w\) back to the
midpoint of \(a_i\), we obtain a cycle \(\gamma_i\), oriented so as to traverse
\(\alpha_i\) in its given direction. By the choice of \(w\), these three pieces meet
only at their endpoints; thus \(\gamma_i\) is simple. %Moreover, \(\beta_{\tau(i)}\) and \(\eta_i\) leave those midpoints through the half-edges not contained in \(\alpha_i\), since otherwise they would meet \(\alpha_i\) also at the endpoints of these half-edges; for \(a_i=b_{\tau(i)}\) this holds by construction. Hence
In particular, \(\gamma_i\) traverses the edges \(a_i\) and \(b_{\tau(i)}\)
in full, in the directions of \(\vec a_i\) and \(\vec b_{\tau(i)}\).

Now, we prove that the cycles \(\gamma_1, \ldots, \gamma_k\) form a basis of simple cycles in $G$.
It suffices to show that they are linearly independent. Indeed, since $G\setminus A$ is a spanning tree, $\dim H_1(G;\mathbb Z)=|A|=k$, so
any $k$ linearly independent cycles automatically form a basis.

Introduce a partial order on \( \{b_1, \ldots, b_{k}\}\): let \(b_i > b_j\) if the path $\beta_{i}$ contains \(b_j\). This relation has no cycles: if $b_i>b_j$, then after reaching $b_j$, the path
$\beta_i$ follows $\beta_j$, so a cyclic chain $b_{i_1}>\dots>b_{i_r}>b_{i_1}$
would force $\beta_{i_1}$ to return to $b_{i_1}$, contradicting its simplicity.
Hence a maximal element exists.

Take a maximal element $b_i=b_{\tau(j)}$. Then $\gamma_j$ is the only cycle
among $\gamma_1, \ldots, \gamma_k$ that contains
%of our collection containing
the edge \(b_{\tau(j)}\). Indeed, take any $m\neq j$. Then the path $\beta_{\tau(m)}$ does not contain $b_{\tau(j)}$; otherwise,
$b_{\tau(m)}>b_{\tau(j)}$, contradicting maximality. The path $\eta_m$ contains
no midpoint of $B$ except possibly its starting point. The path \(\alpha_m\)
contains no midpoint of $B$ except that of \(b_{\tau(m)}\) because the paths of a connection are disjoint. Thus, $\gamma_m$ does not contain the edge
$b_{\tau(j)}$.

Now assume that \(\sum_{m}\lambda_m\gamma_m=0\) for some integers \(\lambda_m\). %Since \(\gamma=\sum_{e}\langle\gamma,e\rangle\,e\) for every cycle \(\gamma\), this means that \(\sum_{m}c_m\langle\gamma_m,e\rangle=0\) for each edge \(e\). Take
%\(e=b_{\tau(j)}\):
Then \(0=\sum_{m}\lambda_m\langle\gamma_m,b_{\tau(j)}\rangle=\pm \lambda_j\) because
the summands with \(m\ne j\) vanish, while
\(\langle\gamma_j,b_{\tau(j)}\rangle=\pm1\). Hence, \(\lambda_j=0\) and
\(\sum_{m\ne j}\lambda_m\gamma_m=0\). Discarding \(\gamma_j\) and \(b_{\tau(j)}\) and
repeating the argument with a maximal element of the remaining edges, we get
\(\lambda_m=0\) for all~\(m\).

Now we prove that there exists a unique permutation \(\pi \in \mathcal{S}_k\) such that \(b_r \subset \gamma_{\pi(r)}\) for each \(r=1,\dots,k\). As shown above, the maximal edge \(b_{\tau(j)}\) appears in exactly one cycle \(\gamma_j\). Therefore, \(\pi(\tau(j)) = j\). After discarding \(b_{\tau(j)}\), pick a maximal edge among the remaining ones. By the same reasoning, it appears in exactly one of the remaining cycles. Repeating this process, we reconstruct the permutation $\pi= \tau^{-1}$.

Similarly, for \((a_1, \dots, a_k)\), there exists a unique such permutation, which is the identity.

Then
$$
\mathrm{sgn}_A(\Gamma) = \prod_{j=1}^k \langle \vec{a_j}, a_j \rangle \qquad\text{ and }\qquad
\mathrm{sgn}_B(\Gamma) = \mathrm{sgn}(\tau^{-1}) \prod_{j=1}^k \langle \vec{b}_j, b_j \rangle .
$$
Since \(\mathrm{sgn}(\tau^{-1}) = \mathrm{sgn}(\tau)\), we arrive at the desired identity $$\mathrm{sgn}_A(\Gamma)\mathrm{sgn}_B( \Gamma) = \mathrm{sgn}(\tau) \prod_{j=1}^k \langle \vec{a_j}, a_j \rangle \langle \vec{b}_j, b_j \rangle= \mathrm{sgn}_A^B(\alpha(G)).$$
%\vspace{-0.6cm}
\end{proof}

%Let \((a_1, \ldots, a_{|P|})\) be the edges with the midpoints in ordered set \( A \). Introduce the \emph{intersection matrix} $X_A(\Gamma)$ with the entries $X_A(\Gamma)_{i}^{j}:=\langle a_i,\gamma_j\rangle$.

Introduce the \emph{pairing matrix} $X_A(\Gamma)$ with the entries $X_A(\Gamma)_{i}^{j}:=\langle \gamma_j,a_i\rangle$.

\begin{lemma}\label{l-pi-exists-and-unique}
For any subgraph $G$ valid relative to $A$ and any basis $\Gamma$ of simple cycles in $G$, we have $\mathrm{sgn}_A( \Gamma) = \det X_A(\Gamma).$
%$\det X_A(\Gamma)=\mathrm{sgn}_A( \Gamma)$.
\end{lemma}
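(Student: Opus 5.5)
We prove the identity by expanding $\det X_A(\Gamma)$ with the Leibniz formula and matching it term by term with the definition of $\mathrm{sgn}_A(\Gamma)$. By definition, $X_A(\Gamma)_i^j=\langle\gamma_j,a_i\rangle$. Hence
\[
\det X_A(\Gamma)=\sum_{\pi\in\mathcal S_k}\mathrm{sgn}(\pi)\prod_{j=1}^k\langle\gamma_{\pi(j)},a_j\rangle .
\]
A summand is nonzero exactly when $a_j\subset\gamma_{\pi(j)}$ for all $j$, because $\langle\gamma,e\rangle=0$ whenever $e\not\subset\gamma$. For such $\pi$, the edge $\vec a_j$ is $a_j$ oriented along $\gamma_{\pi(j)}$, so $\langle\gamma_{\pi(j)},a_j\rangle=\langle\vec a_j,a_j\rangle\in\{\pm1\}$ by~\eqref{eq-bracket-cycle-edge}. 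The nonzero terms of the Leibniz expansion are therefore exactly the terms $\mathrm{sgn}(\pi)\prod_j\langle\vec a_j,a_j\rangle$ in the definition of $\mathrm{sgn}_A(\Gamma)$, and the two quantities coincide.

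The only point to check is that $\vec a_j$ depends on $\pi$. In the definition of $\mathrm{sgn}_A(\Gamma)$, the orientation $\vec a_j$ is taken along $\gamma_{\pi(j)}$, not fixed once for all. The Leibniz term uses the entry $\langle\gamma_{\pi(j)},a_j\rangle$, which is taken along that same cycle, so the two agree for every $\pi$ and no difficulty arises.

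To justify the remark that $\mathrm{sgn}_A(\Gamma)\in\{0,\pm1\}$, we show $\det X_A(\Gamma)=\pm1$ when $G$ is valid relative to $A$. Consider the restriction map $H_1(G;\mathbb Z)\to\mathbb Z^A$, $\gamma\mapsto(\langle\gamma,a_i\rangle)_i$. It is injective, since a cycle vanishing on all $a_i$ is supported in the spanning tree $G\setminus A$ and hence is zero. It is surjective, since for each $a_i$ the fundamental cycle of $a_i$ with respect to the tree $G\setminus A$ maps to $\pm$ the $i$-th unit vector. So this map is an isomorphism of free $\mathbb Z$-modules of rank $k$. In the basis $\Gamma$ its matrix is $X_A(\Gamma)$, which therefore has determinant $\pm1$.
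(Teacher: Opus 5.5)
Your proof is correct and is essentially the paper's own argument: expand $\det X_A(\Gamma)$ by the Leibniz formula, drop the vanishing terms, and identify each surviving entry $\langle\gamma_{\pi(j)},a_j\rangle$ with $\langle\vec a_j,a_j\rangle$. Your extra argument that $\det X_A(\Gamma)=\pm1$ for valid $G$ is also correct but not needed for the lemma; it works because the map $H_1(G;\mathbb Z)\to\mathbb Z^A$ is an isomorphism, injective by the spanning tree $G\setminus A$ and surjective by fundamental cycles.
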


\begin{proof}
This is a direct computation:
\begin{multline*}
    \det X_A(\Gamma)
    =\sum_{\pi\in \mathcal{S}_{k}}
    \mathrm{sgn}(\pi)
    \prod_{j=1}^{k} \langle  \gamma_{\pi(j)},a_j\rangle
    %=\sum_{\pi\in S_{k}: u_{\pi(j)}\supset a_j}
    %\mathrm{sgn}(\pi)
    %\prod_{j=1}^{k} \langle {a}_{j}^\circ, \omega_{p_j} \rangle
    %\langle  a_j^\circ,u_{\pi(j)}\rangle
    %\\
    =\sum_{\pi\in \mathcal{S}_{k}:a_j\subset \gamma_{\pi(j)}\text{ for all }j}
    \mathrm{sgn}(\pi)
    \prod_{j=1}^{k}\langle \vec{a}_{j}, a_j \rangle
    = \mathrm{sgn}_A(\Gamma).
\end{multline*}
%
% This implies that $\mathrm{sgn}_P(A, U)$ is invariant under elementary transformations of the basis $U$, namely, adding an integer multiple of one cycle to another one. Changing the direction of a cycle in $U$ or swapping two cycles in $U$ implies changing both signs $\mathrm{sgn}_P(A, U)$ and $\mathrm{sgn}^Q(B, U)$, so that their product does not depend on $U$.
\end{proof}

\begin{lemma} \label{new_lemma}
Let $A$ be an ordered set of midpoints consistent with $P$.
For any subgraph $G$ valid relative to $P$ but not valid relative to $A$ and any basis $\Gamma$ of cycles in $G$, we have $\mathrm{sgn}_A(\Gamma) = 0$.
\end{lemma}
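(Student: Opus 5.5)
By Lemma~\ref{l-pi-exists-and-unique}, it is enough to prove $\det X_A(\Gamma)=0$. That lemma was stated for $G$ valid relative to $A$, but its computation is a pure expansion of the determinant and does not use validity. So $\mathrm{sgn}_A(\Gamma)=\det X_A(\Gamma)$ holds for any subgraph $G$ and any basis $\Gamma$ of cycles. We therefore need to show that the $k\times k$ matrix $X_A(\Gamma)$, with $X_A(\Gamma)_i^j=\langle\gamma_j,a_i\rangle$, is singular.

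First we record what the hypotheses give. Since $G$ is valid relative to $P$, it contains a set $E$ of $k=|P|$ edges, one in each $\omega^{p_i}$, such that $G\setminus E$ is a spanning tree. Hence $G$ is connected and spanning, and $\dim H_1(G;\mathbb{Z})=k$. If some edge $a_i$ is not in $G$, then the row $i$ of $X_A(\Gamma)$ vanishes, because no cycle of $G$ contains $a_i$. If two of the $a_i$ coincide, two rows are equal. Either way the determinant is zero, so we may assume that $a_1,\dots,a_k$ are distinct edges of $G$.

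Deleting $k$ distinct edges from a connected graph with cycle rank $k$ leaves $n-1$ edges. Such a graph is a spanning tree if and only if it is connected, equivalently acyclic. As $G$ is not valid relative to $A$, the graph $G\setminus\{a_1,\dots,a_k\}$ is not a tree, so it contains a simple cycle $\zeta$. The cycle $\zeta$ is a $1$-cycle of $G$, so in $H_1(G;\mathbb{Z})$, which equals $Z_1(G)$ since $G$ is a graph, we can write $\zeta=\sum_j\lambda_j\gamma_j$ with rational coefficients $\lambda_j$, not all zero because $\zeta\neq0$. We pair this with each $a_i$. Since $\zeta$ avoids every $a_i$,
$$0=\langle\zeta,a_i\rangle=\sum_j\lambda_j\langle\gamma_j,a_i\rangle=\sum_j X_A(\Gamma)_i^j\lambda_j .$$
Thus $X_A(\Gamma)$ has a nonzero kernel vector $(\lambda_j)$, so $\det X_A(\Gamma)=0$ and therefore $\mathrm{sgn}_A(\Gamma)=0$.

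The main point needing care is the reduction to the determinant when $G$ is not valid relative to $A$. Here one must check that the equality $\sum_\pi\mathrm{sgn}(\pi)\prod_j\langle\gamma_{\pi(j)},a_j\rangle=\mathrm{sgn}_A(\Gamma)$ holds verbatim: a term survives exactly when $a_j\subset\gamma_{\pi(j)}$ for all $j$, and then each factor equals $\langle\vec a_j,a_j\rangle$. A second point is that the cycles in $\Gamma$ need not be simple; in that case $\langle\gamma,e\rangle$ should be read as the coefficient of $e$ in the chain $\gamma$. The kernel argument is unaffected by this.
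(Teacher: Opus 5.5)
Your proof is correct and is essentially the paper's argument: a cycle of $G$ avoiding the edges of $A$ yields a nonzero coefficient vector annihilated by $X_A(\Gamma)$. Lemma~\ref{l-pi-exists-and-unique}, whose determinant expansion indeed does not use validity, then turns $\det X_A(\Gamma)=0$ into $\mathrm{sgn}_A(\Gamma)=0$. The only difference is that the paper routes the kernel vector through a Smith-normal-form reduction, which needs row operations as well as the column operations coming from a change of $\Gamma$; you avoid this by reading off singularity directly, as the paper itself does in Remark~\ref{rem:intersection-matrix}.
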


\begin{proof}
Changing the basis $\Gamma$ of $H_1(G;\mathbb{Z})$ to a new one $\tilde \Gamma$ (not necessarily consisting of simple cycles), we can bring the matrix $X_A(\Gamma)$ to the Smith normal form so that $X_A(\tilde \Gamma)^j_i=\lambda_i\delta^j_i$ for some $\lambda_1,\dots,\lambda_k\in\mathbb{Z}$.

Since $G$ is valid relative to $P$ but not valid relative to $A$, it has a simple cycle $\gamma$ that is disjoint from $A$. Express $\gamma$ on the basis $\tilde \Gamma=(\tilde \gamma_1,\dots,\tilde \gamma_k)$ as $\gamma=\mu_1\tilde \gamma_1+\dots+\mu_k\tilde \gamma_k$ for some $\mu_1,\dots,\mu_k\in\mathbb{Z}$. Here, at least one $\mu_i\ne 0$ because $\gamma\ne\emptyset$. We get
$$
0=%\langle a_i,\omega^{p_i}\rangle
\langle \gamma,a_i\rangle
= \sum_j \mu_j%\langle a_i,\omega^{p_i}\rangle
\langle \tilde \gamma_j,a_i\rangle
=\sum_j \mu_j X_A(\tilde \Gamma)^j_i
=\sum_j \mu_j\lambda_i\delta^j_i
=\mu_i\lambda_i.
$$
Since $\mu_i\ne 0$, it follows that $\lambda_i=0$. %We may assume that the transition matrix has a positive determinant, so that
Thus by Lemma~\ref{l-pi-exists-and-unique}, we get $\mathrm{sgn}_A(\Gamma) = \det X_A(\Gamma)=\pm\det X_A(\tilde \Gamma)=\pm\lambda_1\dots\lambda_k=0$.
\end{proof}

\begin{lemma}\label{det_lemma}
For any subgraph $G$ valid relative to both $P$ and $Q$ separately and any basis $\Gamma$ of simple cycles in $G$, we have
\[\mathrm{mult}_P^Q(G) = \left(\sum_{A: a_i \in \omega^{p_i}} \mathrm{sgn}_A( \Gamma)\right)\left(\sum_{B: b_i \in \omega^{q_i}} \mathrm{sgn}_B(\Gamma)\right),\]
where the sums are over all ordered sets \(A\) and \(B\) consistent with $P$ and $Q$, respectively.
\end{lemma}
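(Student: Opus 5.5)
The plan is to factor the Gram minor by Cauchy--Binet and then recognize each factor as a sum of $\mathrm{sgn}_A(\Gamma)$ via Lemma~\ref{l-pi-exists-and-unique}.

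\emph{Step 1: factor the Gram matrix.} Introduce the $b\times k$ matrix $Y=Y(\Gamma)$ with entries $Y_i^l:=\langle\gamma_l,\omega^i\rangle$. By definition, $M(G,\Gamma)=YY^{\mathrm T}$. Hence $M_P^Q=Y_P\,(Y_Q)^{\mathrm T}$, where $Y_P$ and $Y_Q$ are the $k\times k$ submatrices of $Y$ formed by the rows indexed by $P$ and $Q$. Since $|P|=|Q|=k=\dim H_1(G)$, these are square, and
$$\mathrm{mult}_P^Q(G)=\det M_P^Q=\det Y_P\cdot\det Y_Q .$$
So it suffices to prove $\det Y_P=\sum_{A}\mathrm{sgn}_A(\Gamma)$, and likewise for $Q$.

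\emph{Step 2: expand by multilinearity.} The $i$-th row of $Y_P$ is
$$\bigl(\langle\gamma_l,\omega^{p_i}\rangle\bigr)_l=\sum_{a\in\omega^{p_i}}\bigl(\langle\gamma_l,a\rangle\bigr)_l .$$
Expanding the determinant multilinearly in the rows gives
$$\det Y_P=\sum_{(a_1,\dots,a_k)}\det\bigl(\langle\gamma_l,a_i\rangle\bigr)_{i,l},$$
where the sum runs over all tuples with $a_i\in\omega^{p_i}$. Such tuples correspond exactly to the ordered sets $A$ of midpoints consistent with $P$. The sources $\omega^j$ are disjoint, so the $a_i$ are automatically distinct and each tuple is a genuine ordered set. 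The summand is $\det X_A(\Gamma)$.

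\emph{Step 3: identify each summand with $\mathrm{sgn}_A(\Gamma)$.} For $A$ such that $G$ is valid relative to $A$, Lemma~\ref{l-pi-exists-and-unique} gives $\det X_A(\Gamma)=\mathrm{sgn}_A(\Gamma)$. For the remaining $A$, Lemma~\ref{new_lemma} gives $\mathrm{sgn}_A(\Gamma)=0$, and its proof shows $\det X_A(\Gamma)=0$ as well. Alternatively, the identity computation in Lemma~\ref{l-pi-exists-and-unique} is a direct expansion that does not actually use validity, so $\det X_A(\Gamma)=\mathrm{sgn}_A(\Gamma)$ holds for every $A$ and basis of simple cycles.

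\emph{Main obstacle.} The only delicate point is Step 3: checking that the determinant expansion in Lemma~\ref{l-pi-exists-and-unique} is valid regardless of validity. This holds because for a simple cycle $\gamma$ one has $\langle\gamma,a\rangle=\langle\vec a,a\rangle$ when $a\subset\gamma$ and $0$ otherwise. Doing the same for $Q$ and multiplying the two factors yields the claim.
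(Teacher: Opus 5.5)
Your proof is correct and is essentially the paper's argument: the paper also factors $M_P^Q=X_P(\Gamma)X_Q(\Gamma)^{\mathrm T}$. It then expands $\det X_P(\Gamma)$ over choices $a_i\in\omega^{p_i}$ via the Leibniz formula and an interchange of sums, while you do the same expansion by row-multilinearity followed by the per-$A$ computation of Lemma~\ref{l-pi-exists-and-unique}. Your remark that $\det X_A(\Gamma)=\mathrm{sgn}_A(\Gamma)$ holds for every $A$, since the expansion only uses that the $\gamma_l$ are simple cycles and not validity, is the cleanest way to close Step~3 and is exactly what the paper's computation uses implicitly.
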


\begin{proof}
 %\mscommnew{Refer to the definition above, no need to repeat.}
Introduce the \emph{pairing matrix} $X_P(\Gamma)$ with the entries $X_P(\Gamma)_{i}^{j}:=\langle \gamma_j,\omega^{p_i}\rangle$. Then $M^Q_P=X_P(\Gamma)X_Q(\Gamma)^\mathrm{T}$ so that
$$
\mathrm{mult}_P^Q(G)
    = \det M_P^Q
    = \det X_Q(\Gamma) \det X_P(\Gamma).
$$
Now the lemma follows from the computation
\begin{multline*}
    \det X_P(\Gamma)
    =\sum_{\pi\in \mathcal{S}_{k}}
    \mathrm{sgn}(\pi)
    \prod_{j=1}^{k} \langle \gamma_{\pi(j)}, \omega^{p_j} \rangle
    =\sum_{\pi\in \mathcal{S}_{k}}
    \mathrm{sgn}(\pi)
    \prod_{j=1}^{k} \sum_{a_j\subset \gamma_{\pi(j)}}\langle \vec{a}_{j}, \omega^{p_j} \rangle
    \\
    =\sum_{a_1,\dots,a_k}\sum_{\pi\in \mathcal{S}_{k}:a_j\subset \gamma_{\pi(j)}}
    \mathrm{sgn}(\pi)
    \prod_{j=1}^{k}\langle \vec{a}_{j}, \omega^{p_j} \rangle
    =\sum_{A: a_i \in \omega^{p_i}}  \mathrm{sgn}_A(\Gamma).
\end{multline*}
Here, on the right side of the second equality, the inner sum %$\sum_{a_j\subset \gamma_{\pi(j)}}$ \mscommnew{which one? this is not true for all inner sums here}
is over all edges $a_j$ of the cycle $\gamma_{\pi(j)}$, and $\vec{a}_{j}$ denotes the edge $a_j$ oriented in the direction of $\gamma_{\pi(j)}$.
On the left side of the \mscommnew{last} equality, the outer sum is over all ordered sets of edges $(a_1,\dots,a_k)$. We may assume that the edges are distinct and ordered so that $a_j \in \omega^{p_j}$, otherwise, the product vanishes. Then the summation over $(a_1,\dots,a_k)$ is equivalent to the summation over ordered sets of midpoints $A$ consistent with $P$,
and we arrive at the desired expression.
%
%By Lemma~\ref{l-pi-exists-and-unique}, the permutation $\pi$ is uniquely determined by conditions $a_j\subset u_{\pi(j)}$, %summation over $\pi$ can be omitted,
\end{proof}

\begin{proposition}\label{main_proposition}
For any %subsets
$P,Q\subset\{1,\ldots,b\}$ with $|P|=|Q|$,
we have
$F_P^Q = \sum_G \mathrm{mult}_P^Q(G) \cdot c(G),$
where the sum is over all subgraphs $G$ valid relative to both $P$ and $Q$ separately.
\end{proposition}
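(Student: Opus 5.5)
We start from the definition \eqref{eq-total-observable} and apply Corollary~\ref{valid_graphs} to each summand:
\[
F_P^Q=\sum_{A,B}\ \sum_{G\text{ valid rel.\ }A\text{ and }B}\mathrm{sgn}_A^B(\alpha(G))\,c(G),
\]
where $A$ and $B$ run over ordered sets of midpoints consistent with $P$ and $Q$. We then interchange the order of summation, so that the outer sum runs over subgraphs $G$.

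Any $G$ valid relative to some $A$ consistent with $P$ is valid relative to $P$. Indeed, the edges $a_i\in\omega^{p_i}$ are distinct, because the sources are disjoint and the midpoints in $A$ are distinct, so they form a valid edge set. Similarly, validity relative to $B$ gives validity relative to $Q$. Hence
\[
F_P^Q=\sum_{G\text{ valid rel.\ }P,Q}c(G)\sum_{A,B}\mathbf 1[G\text{ valid rel.\ }A\text{ and }B]\ \mathrm{sgn}_A^B(\alpha(G)).
\]

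Next, fix such a $G$ and any basis $\Gamma$ of simple cycles of $G$. The goal is to show that, for every pair $(A,B)$ consistent with $(P,Q)$, the inner summand equals $\mathrm{sgn}_A(\Gamma)\,\mathrm{sgn}_B(\Gamma)$, including the pairs for which the indicator vanishes. Once this is done, the inner sum factors as $\bigl(\sum_A\mathrm{sgn}_A(\Gamma)\bigr)\bigl(\sum_B\mathrm{sgn}_B(\Gamma)\bigr)$, which is $\mathrm{mult}_P^Q(G)$ by Lemma~\ref{det_lemma}, and the proposition follows.

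There are two cases.
\begin{itemize}
\item If $G$ is not valid relative to $A$ (or to $B$), then Lemma~\ref{new_lemma} gives $\mathrm{sgn}_A(\Gamma)=0$ (or $\mathrm{sgn}_B(\Gamma)=0$). This matches the vanishing indicator.
\item If $G$ is valid relative to both $A$ and $B$, then Lemma~\ref{sign_lemma} gives $\mathrm{sgn}_A^B(\alpha(G))=\mathrm{sgn}_A(\Gamma_0)\,\mathrm{sgn}_B(\Gamma_0)$ for some specific basis $\Gamma_0$ of simple cycles.
\end{itemize}

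The main obstacle is in the second case: we must pass from this special basis $\Gamma_0$ to the fixed basis $\Gamma$. For this we use Lemma~\ref{l-pi-exists-and-unique}, which gives $\mathrm{sgn}_A(\Gamma)=\det X_A(\Gamma)$. If $\Gamma=C\Gamma_0$ with $C\in GL_k(\mathbb Z)$, then $X_A(\Gamma)=X_A(\Gamma_0)C^{\mathrm T}$, so $\det X_A(\Gamma)=\det C\cdot\det X_A(\Gamma_0)$. The same factor $\det C=\pm1$ appears for $B$, and $(\det C)^2=1$. Therefore the product $\mathrm{sgn}_A(\Gamma)\,\mathrm{sgn}_B(\Gamma)$ does not depend on the basis.

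The same invariance shows that $\mathrm{mult}_P^Q(G)$ is independent of $\Gamma$, since $\det M_P^Q=\det X_P(\Gamma)\det X_Q(\Gamma)$. One point needs care here: the summation over $A$ in Lemma~\ref{det_lemma} implicitly includes ordered sets of midpoints of distinct edges only. Tuples with a repeated edge do not occur in the total observable, because midpoints in $A$ are distinct points. They also contribute nothing to Lemma~\ref{det_lemma}, since repeated edges correspond to repeated rows of $X_A$, so both sides range over the same $A$ and $B$.
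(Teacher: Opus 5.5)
Your proof is correct and uses exactly the paper's ingredients: Corollary~\ref{valid_graphs}, Lemmas~\ref{new_lemma} and~\ref{sign_lemma}, Lemma~\ref{l-pi-exists-and-unique} for basis-independence of $\mathrm{sgn}_A(\Gamma)\,\mathrm{sgn}_B(\Gamma)$, and Lemma~\ref{det_lemma}. The paper runs the same chain of equalities in the opposite direction, starting from $\sum_G \mathrm{mult}_P^Q(G)\,c(G)$, and your explicit check that validity relative to an $A$ consistent with $P$ implies validity relative to $P$ supplies a step the paper uses only implicitly when it interchanges the sums.
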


\begin{proof}
By Lemma~\ref{det_lemma}, for any basis $\Gamma$ of simple cycles in $G$, we have
\[
\sum_G \mathrm{mult}_P^Q(G) \cdot c(G) =
\sum_G \left(\sum_{A: a_i \in \omega^{p_i}} \mathrm{sgn}_A(\Gamma)\right)\left(\sum_{B: b_i \in \omega^{q_i}} \mathrm{sgn}_B(\Gamma)\right)c(G),
\]
where the outer sum is %taken
over all subgraphs $G$ that are valid relative to both $P$ and $Q$ separately, and the inner sums are %taken
over ordered sets $A$ and $B$ that are consistent with $P$ and $Q$, respectively.

By Lemma~\ref{new_lemma}, we can restrict the inner sums to %those
$A$ and $B$ such that $G$ is valid relative to $A$ and $B$, respectively. Then the expression becomes
\[
\sum_{A, B: a_i \in \omega^{p_i}, b_i \in \omega^{q_i}} \sum_G \mathrm{sgn}_A(\Gamma) \cdot \mathrm{sgn}_B( \Gamma) \cdot c(G),
\]
where $A$ and $B$ are ordered sets consistent with $P$ and $Q$, respectively, and the inner sum is %taken
over all subgraphs $G$ valid relative to both $A$ and $B$ separately.
% where the sum is over all subgraphs $G$ valid relative to both $A$ and $B$ separately.

By Lemma~\ref{l-pi-exists-and-unique}, we have
$\mathrm{sgn}_A(\Gamma) = \det X_A(\Gamma)$. This implies that $\mathrm{sgn}_A(\Gamma)$ is invariant under elementary transformations of the basis $\Gamma$, namely, adding an integer multiple of one cycle to another one. Changing the direction of a cycle in $\Gamma$ or swapping two cycles in $\Gamma$ implies changing both signs $\mathrm{sgn}_A(\Gamma)$ and $\mathrm{sgn}_B(\Gamma)$, so that their product does not depend on $\Gamma$.

By Lemma~\ref{sign_lemma}, for every subgraph $G$ valid relative to both $A$ and $B$ separately, there exists a basis $\Gamma$ such that
\[
\mathrm{sgn}_A(\Gamma) \cdot \mathrm{sgn}_B(\Gamma) = \mathrm{sgn}_A^B(\alpha(G)).
\]
Since the left-hand side is independent of $\Gamma$, this identity holds for any basis $\Gamma$.

Therefore,
\[
\sum_G \mathrm{mult}_P^Q(G) \cdot c(G) = \sum_{A, B: a_i \in \omega^{p_i}, b_i \in \omega^{q_i}} \sum_G \mathrm{sgn}_A^B(\alpha(G)) \cdot c(G),
\]
where $A$ and $B$ are ordered sets consistent with $P$ and $Q$, respectively, and the inner sum is %taken
over all subgraphs $G$ valid relative to both $A$ and $B$ separately.

By Corollary~\ref{valid_graphs} and~\eqref{eq-total-observable}, this expression equals $F_P^Q$, completing the proof.
\end{proof}

%Now we are ready to prove Theorem~\ref{th-all-minor-matrix-tree}.
Let us summarize the proof of the all-minor matrix-tree theorem.

\begin{proof}[Proof of Theorem~\ref{th-all-minor-matrix-tree}]
    By Lemma~\ref{connection_F_and_L}, we have
    $
        \det L_P^Q = {F_P^Q}/{\det K^\mathrm{V}_\mathrm{V}}.
    $
    By the classical matrix-tree theorem, the denominator equals
    $
        \det K^\mathrm{V}_\mathrm{V} = \sum_T c(T),
    $
    where the sum is over all spanning trees~$T$ of the network.
    By Proposition~\ref{main_proposition}, the numerator equals
    $
        F_P^Q = \sum_G \mathrm{mult}_P^Q(G) \cdot c(G),
    $
    where the sum is over all subgraphs~$G$ that are valid relative to both~$P$ and~$Q$ separately.
    %
    %Substituting these expressions into the formula for~$\det L_P^Q$, we get the desired result.
\end{proof}

\begin{remark}\label{rem:intersection-matrix} %\mscommnew{Svetlana, do you remember what the issue was in this argument?}
In Theorem~\ref{th-all-minor-matrix-tree}, the sum in the numerator may equivalently be taken over all connected spanning subgraphs with $n+|P|-1$ edges, where $n$ is the number of vertices in the network. %containing all the vertices.
For such subgraphs, we define multiplicities $\operatorname{mult}^Q_P(G)$ literally as above. However, if a subgraph $G$ is not valid relative to $P$ or $Q$, its multiplicity vanishes and it does not contribute.

Indeed, the multiplicity can be equivalently expressed in terms of the pairing matrix $X_P(\Gamma)$ (defined in the proof of Lemma~\ref{det_lemma}) %, so that $M_P^Q = X_P(\Gamma)X_Q(\Gamma)^\mathrm{T}$ and
as $\mathrm{mult}_P^Q(G)=\det X_P(\Gamma)\det X_Q(\Gamma)$.
We claim that if $G$ is not valid relative to $P$, then $\det X_P(\Gamma)=0$. %the matrix $X_P(\Gamma)$ is singular.
The row of $X_P(\Gamma)$ corresponding to $p\in P$ is
$$
\sum_{e\in \omega^p\cap G}
\big(\langle\gamma_1,e\rangle,\dots,\langle\gamma_k,e\rangle\big).
$$
Expanding $\det X_P(\Gamma)$ by multilinearity in the rows, each term corresponds
to choosing one edge $e_p\in\omega^p\cap G$ for each $p\in P$. Since $G$ has $n+|P|-1$ edges, deleting the chosen edges leaves a
subgraph with $n-1$ edges; if this subgraph is not a tree, it must
contain a cycle $\gamma$. %If such a cycle is denoted by $\gamma$, then
The vector of
coordinates of $\gamma$ in the basis $\Gamma$ lies in the kernel of the
corresponding matrix, because $\langle\gamma,e_p\rangle=0$ for all $p\in P$. Therefore the corresponding
determinant is zero. If $G$ is not valid relative to $P$, this happens for every
choice of the edges $e_p$, and therefore $\det X_P(\Gamma)=0$. The same argument
applies to $Q$, so $\operatorname{mult}^Q_P(G)=0$ unless $G$ is valid relative to
both $P$ and $Q$.
\end{remark}

\begin{proof}[Proof of Corollary~\ref{cor-all-minor-matrix-tree}]
In Corollary~\ref{cor-all-minor-matrix-tree}, the sum in the numerator may equivalently be taken over all connected spanning subgraphs $G$ with $n+|P|-1$ edges, where $n$ is the number of vertices in the network. This is analogous to Remark~\ref{rem:intersection-matrix}, but the proof is even simpler: if $G$ is not valid relative to both $P$ and $Q$ separately, then $\imath^*\omega^{p_1},\dots,\imath^*\omega^{p_k}\in H^1(G;\mathbb{Z})$ are linearly dependent, immediately implying vanishing multiplicity of $G$ in $L_P^Q$.

Let us show that the right side in Corollary~\ref{cor-all-minor-matrix-tree} is invariant under circuit refinement. Subdivide an edge $e$ into $e_1,\dots,e_\ell$ with
$
\frac{1}{c(e)}=\sum_{i=1}^{\ell}\frac{1}{c(e_i)}.
$
As shown in the proof of Corollary \ref{cor-cohomological-current}, then the denominator $\sum_T c(T)$ is multiplied by $\prod_i c(e_i)/c(e)$.
Analogously, %to Corollary \ref{cor-cohomological-current},
a %spanning tree or
spanning subgraph $G$ with $n+|P|-1$ edges containing $e$ refines to one with $n+|P|+\ell-2$ edges containing all of $e_1,\dots,e_\ell$,
%multiplying its weight by
thus having the weight
$c(G)\cdot\prod_i c(e_i)/c(e)$. A %spanning tree or
spanning subgraph $G$ with $n+|P|-1$ edges not containing $e$ gives rise to $\ell$ ones with $n+|P|+\ell-2$ edges obtained by omitting exactly one of the edges $e_i$. The total weight of those $\ell$ subgraphs is again $c(G)\cdot\prod_i c(e_i)/c(e)$.
%$
%\prod_i c(e_i)\sum_i\frac{1}{c(e_i)}=\frac{\prod_i c(e_i)}{c(e)}.
%$
%Thus every contribution to the numerator and to the denominator is multiplied by
%the same factor $\prod_i c(e_i)/c(e)$.
The cycles in those $\ell$ subgraphs are obviously identified with cycles in the original subgraph $G$, so the Gram matrix $M(G,\Gamma)$ and the multiplicity $\operatorname{mult}^Q_P(G)$ are unchanged.
Therefore, the numerator and denominator are multiplied by the same factor, and
the quotient is invariant under refinement.

%By Example \ref{ex:reduc}, %after finitely many refinements,
The response matrix, hence $\det L^Q_P$, is invariant under
refinement, and a cohomological circuit can be realized as a source-synchronized circuit after suitable refinement, by Example \ref{ex:reduc}. % $\widetilde S$; %by the preceding paragraph
In the refined circuit, the basis cocycle %representing
$\omega^i$ is the indicator function of %the corresponding
a synchronized source. Thus, for any $G$ and any basis $\Gamma$ %=(\gamma_1,\dots,\gamma_k)$
 of $H_1(G;\mathbb Z)$, the matrices $M(G,\Gamma)$ and the multiplicities $\operatorname{mult}^Q_P(G)$ for both circuits are the same. By Theorem~\ref{th-all-minor-matrix-tree} and Remark~\ref{rem:intersection-matrix}, the corollary follows.
\end{proof}

\begin{remark} After we presented Theorem~\ref{th-all-minor-matrix-tree} at the Third Joint SIAM/CAIMS Annual Meetings in 2025, we learned that its particular case for networks on surfaces had been obtained slightly earlier by W.-Y.~Lam et al.~\cite{Lam-25} by a different and very elegant method. Those authors used a `homotopy argument' to eliminate intersections of certain graphs on the surface (which made the argument strongly tied to that particular case). Despite our primary motivation being the application of parafermionic observables, it would be interesting if this homotopy argument could be adopted to prove the theorem in full generality without the observables.
\end{remark}

\section{Research program on multi-point parafermionic observables}

\label{sec-program}

\subsection{Comparison to known parafermionic observables}

Let us show that for a single source and a single sink, %, i.e., for $|A| = |B| = 1$,
our observable~\eqref{eq-observable} is equivalent to a well-known parafermionic observable in the uniform spanning tree model and places the latter within the general unified framework for constructing such observables.

The most famous %favorite %parafermionic
observable in the %uniform spanning tree
model is defined as follows. Fix a pair of directed edges $e\ne e'$. A spanning tree has a unique oriented path $\alpha$ from the endpoint to the starting point of $e'$. The %desired
observable is the difference between the probability that $\alpha$ passes the edge $e$ in the direction of $e$ and the probability that $\alpha$ passes $e$ in the opposite direction. We %have already
encountered the observable in~\eqref{eq-current-tree} in a form.
% was implicitly introduced in the numerator of~\eqref{eq-current-tree} above. It is defined as an algebraic sum over certain spanning trees, depending on two edges $e_k$ and $e$. %two vertices $v_{i'}$ and $v_{j}$. At the same time,
By Proposition~\ref{prop-current}, it equals %is proportional to
the current through $e$, hence satisfies Kirchhoff's laws, % (I) and~(V),
when all edges have unit conductance and a unit voltage is applied to $e'$. % and $c(e')=1$,

Furthermore, it equals the observable $F(A\leftrightarrow B)$ given by~\eqref{eq-observable} divided by the partition function $\sum_Tc(T)$ if %$k=1$ and
$A$ and $B$ consist of the midpoints of $e$ and $e'$, respectively. This follows from Proposition~\ref{prop-response}, Lemma~\ref{connection_F_and_L}, and the matrix-tree theorem, because the current is exactly an entry of the response matrix. This is not straightforward to see directly: the sums in the numerator of~\eqref{eq-current-tree} and~\eqref{eq-observable} have different numbers of summands, and %nontrivial
cancellations are present.

Let us show that the observable $F(A\leftrightarrow B)$ matches the general construction %of parafermionic observables
from~\cite{Duminil-Copin-13}; cf.~\cite{Ozhegov-25}.
%for an elementary introduction).
Informally, general Smirnov's parafermionic observable is constructed as a sum
\begin{equation}\label{eq-general-observable}
    F(a\to b)=\sum_H \exp(-i\sigma W(\alpha))c(H)
\end{equation}
 over all configurations $H$ with ``disorders'' at two fixed edge midpoints $a$ and $b$, where $c(H)$ is a positive configuration weight, $W(\alpha)$ is the winding (measured in radians) of the specially-chosen path $\alpha$ joining %the two midpoints
 $a$ and $b$ in $H$, and $\sigma$ is a real parameter called spin.

In our case, a configuration $H$ is a spanning forest with a source at the only midpoint of $A$ and a sink at the only midpoint of $B$. The path $\alpha$ is the unique path %(connection)
between the midpoints in the forest. Now, if the graph is planar and the edges $a_1=e$ and $b_1=e'$ containing the midpoints are parallel and co-oriented, then the winding number $W(\alpha)$ of the path is %$0$ or $\pm \pi$
an even or odd multiple of $\pi$ depending on whether
$\langle \vec{a}_1, a_1 \rangle \langle \vec{b}_1, b_1\rangle$ is plus or minus one. In other words, by~\eqref{eq-connection-sign}, we have $\exp(-i\sigma W(\alpha))=\mathrm{sgn}_A^B(\alpha)$ with $\sigma=1$. A similar equation holds for nonparallel edges; only a constant is added to all winding numbers $W(\alpha)$, resulting in an irrelevant overall factor in the observable. Thus, for $\sigma=1$ and $|A|=|B|=1$, sign~\eqref{eq-connection-sign} can be viewed as a natural generalization of the factor $\exp(-i\sigma W(\alpha))$ to the case where the graph is no longer embedded into the plane or a surface, so that the winding is no longer defined. The choice of the orientation of the edges $a_1$ and $b_1$ plays the role of ``local charts'' around the midpoints. So, %observable
$F(A\leftrightarrow B)$ can be viewed as a natural version of% general construction
~\eqref{eq-general-observable} for the uniform spanning-tree model.

Furthermore, the value $\sigma=1$ is consistent with the general relation $\sigma=3/\kappa-1/2$ between the spin $\sigma$ and the parameter $\kappa=2$ of the Schramm--Loewner evolution $SLE_2$ describing the continuum limit of the path \cite[Theorem 3]{Werness-12}. (This should not be confused with the dual $SLE_8$ describing a different path arising in the uniform spanning-tree model, and one needs to be careful with different notational conventions in the literature.) This is also consistent with the relation $n=-2\cos\frac{4\pi(\sigma-1)}{3}$ between the spin $\sigma$ and the dimension $n$ of a loop $O(n)$-model \cite[Sec.~12.5]{Duminil-Copin-13}, as the latter is believed to belong to the same universality class as the uniform spanning tree for $n=-2$. Note that there is a combinatorial interpretation of negative-dimensional tensors, with dimension $n=-2$ playing a special role \cite{Penrose-71}.
%The value $\sigma=1$ also supports viewing an electric network as a discrete analog of an electromagnetic field, which has spin $1$.
%This explains why we prefer to set $\sigma=-1$ while $\exp(-i\sigma W(\alpha))=\mathrm{sgn}_A^B(\alpha)$ remains true for any other odd integer $\sigma$.

Next, the observable $F(A\leftrightarrow B)$ can be viewed as a \emph{multi-point} generalization of the famous 2-point observable we started with. Instead of one path $\alpha$, it involves a connection $\alpha$, that is, a disjoint union of several paths, and ``disorders'' are now decomposed into sources and sinks. Interestingly, although our observable has integer spin $1$, its multipoint version involves the \emph{fermionic sign} $\mathrm{sgn}({\pi})$ in~\eqref{eq-connection-sign}, analogously to \emph{symplectic fermions}. %Such a wrong relation between spin and statistics is not uncommon in quantum field theory.

Our spin-$1$ multi-point observable can be further viewed as a natural analog of spin-$1/2$ observables in the Ising model \cite{chelkak2017revisiting}. Not just are the definitions similar, but the determinant formulae in Lemma~\ref{connection_F_and_L} above are parallel to Pfaffian formulae in \cite{chelkak2017revisiting}. A new feature of our observable is the decomposition of ``disorders'' into sources and sinks, the extension beyond 2-dimensional models (embedded in the plane or on a surface), and the links to cohomology.

%Even when multi-point correlation functions were studied %in the Ising model
%\cite{Chelkak-etal-15}, the underlying observable was usually still a 2-point one, in the sense that the connection $\alpha$ consisted of a single path (and \emph{multi-point fermionic observables} in~\cite{Chelkak-etal-21} did not match general construction~\eqref{eq-general-observable}).
%\mscommnew{[add ref to Chelkak et al]}.

\subsection{Research program on higher-dimensional $n$-point parafermionic observables}

Our results open several new directions for future work, which we now sketch informally. We suggest decomposing further development into the following steps of increasing complexity:
\begin{center}
\begin{tabular}{|c|}
\hline
     other models \\
\hline
\end{tabular}
$\to$
\begin{tabular}{|c|}
\hline
     multi-point observables \\
\hline
\end{tabular}
$\to$
\begin{tabular}{|c|}
\hline
     higher dimensions \\
\hline
\end{tabular}
\end{center}

\textbf{Extension to other models.} Despite having already demonstrated an application of a higher-dimensional multi-point parafermionic observable, we suggest starting with the known 2-dimensional 2-point case on a square or honeycomb lattice and rethinking it from the new viewpoint %known observables
in the other models. The most straightforward is the \emph{loop-erased random walk}, having the same law as the above path $\alpha$ in the uniform spanning-tree model. The next accessible one is the \emph{double-dimer model} on the subdivided graph, %with sources and sinks,
the transition to which we have already established in Sec.~\ref{sec-Temperley} (via a modified Temperley correspondence, which also works for an arbitrary non-planar graph). In the future work by the last author and M.~Dmitriev, we shall show that, for a special choice of complex weights, the resulting 2-point observable reproduces %the one from
the free massive quantum field of spin 0 in 1+1 dimensions in the continuum limit. We shall also link the observable to one in the \emph{six-vertex model}, again with %special
complex weights.

There are also %less precise and
less understood analogies deserving further investigation. Lemma~\ref{det_lemma} above suggests that our spin-$1$ observable in the double-dimer model could be a ``square'' of a yet unknown spin-$1/2$ observable in the \emph{dimer model} (at least in the case of unit edge conductances). Both can be related to multi-point spin-$1/2$ observables in the \emph{Ising model} \cite{chelkak2017revisiting}.
%and Dubedat's $\tau$-functions in the double-dimer model.
%Similarly, one can expect \cite{Masbaum-Vaintrob} a relation between determinant formulae for our spin-$1$ observable in Lemma~\ref{connection_F_and_L} above and Pfaffian formulae for spin-$1/2$ observables in the \emph{Ising model} \cite{Chelkak-etal-21}.
%Our observable for the double-dimer model could have relationship with Dubedat's $\tau$-functions.
The formula $F^Q_P=\sum_G\det X_P(\Gamma)\det X_Q(\Gamma)c(G)$ in Sec.~\ref{sec-conclusion} deserves a probabilistic interpretation. Note that the matrix $X_P(\Gamma)$ naturally connects the cohomology of the network and its subgraph $G$. It would be interesting to find analogs of \emph{cohomological boundary conditions} (see Definition~\ref{def-homological}) for the other models and find their tiling interpretation similar to \cite{PS,S}; cf.~\cite{Dmitriev-Ozhegov,Novikov}.
%Next, it would be intriguing to establish a connection to the \emph{loop $O(n)$ model}, where multi-point generalizations of observable~\eqref{eq-general-observable} have appeared recently \cite{KS-20,KSS-25}. The relation $n=-2\cos\frac{4\pi(\sigma-1)}{3}$ between the spin $\sigma$ and the dimension $n$ \cite[Sec.~12.5]{Duminil-Copin-13} suggests a possible relation between our observable~\eqref{eq-observable} and the (properly interpreted) loop $O(n)$-model for $n=-2$. Note that there is a combinatorial interpretation of negative-dimensional tensors, with dimension $-2$ playing a special role \cite{Penrose-71}.

\textbf{Extension to multi-point observables.} For the multi-point case, one could start with establishing basic properties of observable~\eqref{eq-observable}, such as discrete harmonicity and boundary conditions. As we have already observed, the 2-point observable is discrete harmonic in the sense of satisfying Kirchhoff's laws, % (I) and (V),
and it would be useful to have a direct combinatorial proof of that. Next, one can compute the continuum limit of the multi-point observable; there are well-developed techniques to prove the convergence of discrete harmonic functions to their continuum analogs \cite{Skopenkov-13,BobenkoSkopenkov+2016+217+250,Skopenkov+2023}, and the methods of the present paper support various non-standard boundary conditions. As in \cite{KSS-25}, one can apply this to the computation of certain archetypal probabilities in the continuum limit, for instance, the probability that the above path $\alpha$ lies to the right of one or two given points, studied by R.~Kenyon \cite{Kenyon-11}. Some illuminating techniques of dealing with multi-point observables have already been developed in the Ising \cite{chelkak2017revisiting} and loop $O(1)$-models (M.~Khristoforov and V.~Kleptsyn, private communication). Similar computations could guide the construction of multi-point observables in the double-dimer and other models. %It would be particularly interesting to represent known multi-point observables in the Ising model \cite{Chelkak-etal-21} as a multi-point version of general construction~\eqref{eq-general-observable} involving a collection $\alpha$ of disjoint paths.

\textbf{Extension to higher dimensions.} One could proceed with a 3-dimensional cubic lattice and use the discrete harmonicity and boundary conditions for a rigorous computation of the continuum limit of observable~\eqref{eq-observable}. Discrete harmonicity, in contrast to discrete analiticity, makes perfect sense in any dimension. The main open question is whether this is still useful for the computation of continuum limits; cf.~\cite{kozma2007scaling}. %, for instance, for the loop-erased random walk.
For instance, given four vertices at the boundary of a $n\times n\times n$ cube, one can try to find (the continuum limit of) the probability that the path joining the first two vertices in the uniform spanning tree passes to the left of the path joining the other two vertices. The particular target probabilities, say, in the loop-erased random walk, can be ``reverse-engineered'' from the construction of the observable itself, as in~\cite{KSS-25}. Analyzing the analogy between observable~\eqref{eq-observable} and the one in the Ising model \cite{chelkak2017revisiting}, one can try to extend the latter to 3 dimensions; cf.~\cite{Masbaum-Vaintrob}. This could pave a path to other models and higher dimensions.

%\section{Thompson's principle}
%
%\section{Rayleigh monotonicity}

%\mscomm{------------------------------------------------------------------------------}

%\show\subsection

\subsection*{Acknowledgements}

We are grateful to Vassily Gorbounov, Anton Kazakov, Mikhail Khristoforov, Richard Kenyon, Graeme Milton, and Stanislav Smirnov for useful discussions.

\bibliographystyle{plainurl}
\bibliography{kirchoff}
\end{document}